\newcommand\hide[1]{}
\newcommand\ISH[1]{
    {\color{blue}{\bf [}I.Sh.:}    {\it #1}{\color{blue}{\bf]}}
}
\renewcommand\ISH[1]\empty
\def\MFP{\mathrm{MFP}}
\newcommand{\Models}{\mathop{\mathsf{Mod}}}
\newcommand{\Frames}{\mathop{\mathsf{Fr}}}
\newcommand{\Th}{\mathop{\mathsf{Th}}} 
\newcommand{\mL}{\Models(\bL)}
\newcommand{\fL}{\Frames(\bL)}
\def\mo{\models}
\def\vf{\varphi}
\newcommand{\eg}{e.g.\@\xspace}
\newcommand{\ie}{i.e.\@\xspace}
\newcommand{\cf}{cf.\@\xspace}
\newcommand{\wrt}{w.r.t.\@\xspace}
\newcommand{\compact}{\itemsep=0pt \parsep=0pt \parskip=0pt}
\newcommand{\AND}{\textstyle{\bigwedge}}
\newcommand{\B}{\ensuremath{{\square}}\xspace}
\newcommand{\D}{\ensuremath{{\Diamond}}\xspace}
\newcommand{\BP}{{\boxplus}}
\newcommand{\DP}{\mbox{\raisebox{0.2ex}{${\diamondtimes}$}}}
\newcommand{\Fi}{\varphi}
\newcommand{\Var}{\mathsf{Var}}
\newcommand{\Fm}{\mathsf{Fm}}
\newcommand{\Sub}{\ensuremath{\mathop{\mathsf{Sub}}}\xspace}
\newcommand{\se}{\subseteq}
\newcommand{\su}{\supseteq}
\newcommand{\defeq}{\leftrightharpoons}
\newcommand{\nmodels}{\mathrel{\,\not\!\models}}
\newcommand{\valid}{\models}
\newcommand{\proves}{\vdash}
\newcommand{\nproves}{\nvdash}
\newcommand{\ff}[1]{\widehat{#1}}
\newcommand{\fa}{\ff{a}}
\newcommand{\fb}{\smash{\ff{b}}}
\newcommand{\fx}{\ff{x}}
\newcommand{\fy}{\ff{y}}
\newcommand{\fz}{\ff{z}}
\newcommand{\fW}{\smash{\ff{W}}}
\newcommand{\fR}{\smash{\ff{R}}}
\newcommand{\fS}{\smash{\ff{S}}}
\newcommand{\fV}{\smash{\ff{V}}}
\newcommand{\fF}{\smash{\ff{F}}}
\newcommand{\fM}{\smash{\ff{M}}}
\newcommand{\lra}{\leftrightarrow}
\newcommand{\Iff}{\Leftrightarrow}
\newcommand{\IFF}{\Longleftrightarrow}
\newcommand{\To}{\Rightarrow}
\newcommand{\TO}{\Longrightarrow}
\newcommand{\OT}{\Longleftarrow}
\newcommand{\bL}{{L}}
\newcommand{\bK}{\mathbf{K}}
\newcommand{\vL}{\bL}
\newcommand{\cF}{\mathcal{F}}
\newcommand{\cM}{\mathcal{M}}
\newcommand{\BX}[1]{\!\mathop{\texttt{\upshape[}#1\texttt{\upshape]}\!}}
\newcommand{\DX}[1]{\!\mathop{\texttt{\upshape$\langle$}#1\texttt{\upshape$\rangle$}\!}}
\newcommand{\Min}[2][]{R^{\mathsf{min}}_{{#2},{#1}}}
\newcommand{\MinR}[1]{R^{\mathsf{min}}_{{#1}}}
\newcommand{\Max}[2][]{R^{\mathsf{max}}_{{#2},{#1}}}
\newcommand{\MaxR}[1]{R^{\mathsf{max}}_{{#1}}}
\newcommand{\too}{\longrightarrow}
\newcommand{\tooo}[1]{\stackrel{#1}{\too}}
\newcommand{\PDL}{\ensuremath{\mathsf{PDL}}\xspace}
\begin{document}

\begin{frontmatter}

\title{Completeness of logics with the transitive closure modality and related logics}
 \author{Stanislav Kikot}\footnote{\texttt{staskikotx@gmail.com}}
 \address{University of Oxford
}
 \author{Ilya Shapirovsky}\footnote{\texttt{ilya.shapirovsky@gmail.com}.
 The work on this paper was supported by the Russian Science Foundation under grant 16-11-10252 and carried out at Steklov Mathematical Institute of Russian Academy of Sciences.}
 \address{
 Steklov Mathematical Institute of Russian Academy of Sciences 
 and 
Institute for Information Transmission Problems of Russian Academy of Sciences}
 \author{Evgeny Zolin}\footnote{\texttt{ezolin@gmail.com}.
 The article was prepared within the framework of the HSE University Basic Research Program.}
 \address{National Research University Higher School of Economics, Russian Federation\\
}
  
\begin{abstract} 
We give a sufficient condition for Kripke completeness 
of modal logics enriched with the transitive closure modality.
More precisely, we show that if a logic admits
what we call definable filtration (ADF),
then such an expansion of the logic is complete;
in addition, has the finite model property, and again ADF.
This argument can be iterated, and as an application we obtain
the finite model property for PDL-like expansions of logics that ADF.
\end{abstract}

\begin{keyword}
Filtration, decidability, finite model property, transitive closure, PDL.
\end{keyword}

\end{frontmatter}


\section*{Introduction}

This paper makes a contribution to the study
of modal logics enriched by the transitive closure modality.

Modal logics that, in addition to the modal operator $\B$ for a binary relation~$R$, also contain the operator $\BP$ 
for the transitive closure of~$R$, are quite common. For instance,
such are operators `everyone knows that' and the operator 
of common knowledge in epistemic logic \cite{fagin2003reasoning}. Other examples include
the logic $\mathsf{Team}$ for collective beliefs and actions \cite{dunin2011teamwork}, 
$\mathrm{Log}(\mathbb{N}, \mathsf{succ}, <)$ and the proposition dynamic logic (PDL) \cite{fischer1979propositional}.

So far, completeness theorems and decidability procedures for such logics have
had bespoke proofs, though many of them refer to Segerberg's \cite{Segerberg1977} 
and Kozen and Parikh's arguments for PDL \cite{KozenParikh1981}.

In this paper we present a toolkit for proving results on completeness, 
finite model property and decidability for such logics in a general setting.
In Section~\ref{Sect:Complete:TransCl} we suggest a sufficient condition for a 
unimodal logic $L$ that ensures 
that axioms for $L$ together with Segerberg's axioms for the transitive 
closure modality are complete for the bimodal logic obtained by enriching the frames for 
$L$ with the transitive closure of their accessibility relation.
In order to state the condition,
we come up with a hierarchy of `admits filtration' properties
(thus extending our earlier work~\cite{KSZ:AiML:2014}) presented in Section~\ref{Sect:Filtration}.
In Section~\ref{Sect:pdlization} we give examples of logics that satisfy these properties
and show how our sufficient condition can be iterated for obtaining completeness
for `PDLizations' of a family of logics. 
As a by-product we obtain the finite model property
and often decidability (through Harrop's theorem) for the logics in question.

\section{Preliminaries}\label{Sect:Prelim}

We assume the reader to be familiar with syntax and semantics of multi-modal logic
\cite{B:R:V:ML:2002,Ch:Za:ML:1997},
so we only briefly recall some notions and fix notation.
Let $\Sigma$ be a finite alphabet (of indices for modalities).
The set $\Fm(\Sigma)$ of \emph{modal formulas} (\emph{over~$\Sigma$}) is defined from
propositional letters $\Var=\{p_0,p_1,\ldots\}$
using Boolean connectives and the modalities $\BX{e}$, for ${e\in\Sigma}$,
according to the syntax:
\begin{center}$
\Fi \ ::=\ \bot\ \mid\ p_i\ \mid \ \Fi\to\psi \ \mid\ \BX{e}\Fi.
$\end{center}
We use standard abbreviations (\eg, $\top,\land$);
in particular, $\DX{e}\Fi:=\neg\BX{e}\neg\Fi$.
For a set of formulas $\Gamma$,
by $\Sub(\Gamma)$ we denote the set of all subformulas of formulas from~$\Gamma$.
We say that $\Gamma$ is \emph{$\Sub$-closed} if ${\Sub(\Gamma)\se\Gamma}$.

A ($\Sigma$-)\emph{frame} is a pair $F=(W,(R_e)_{e\in\Sigma})$,
where ${W\ne\varnothing}$ and ${R_e\se W{\times}W}$ for ${e\in\Sigma}$.
A \emph{model} based on~$F$ is a pair $M=(F,V)$, where ${V(p)\se W}$, for all ${p\in\Var}$.
The \emph{truth relation} ${M,x\models\Fi}$ is defined in the usual way, \eg
\begin{center}$
M,x\models\BX{e}\Fi \quad \defeq \quad \mbox{for all $y\in W$, \ if $x\,R_e\,y$ \ then \ $M,y\models\Fi$.}
$\end{center}
We write $M\mo\vf$ if $M,x\mo\vf$ for all $x$ in $M$.   
A formula $\Fi$ is \emph{valid} on~$F$, notation ${F\models\Fi}$,
if $M\models\Fi$ for all $M$ based on~$F$. 
For a class of frames~$\cF$, an \emph{$\cF$-model} is a model based on a frame from~$\cF$.

A (\emph{normal modal}) \emph{logic} (\emph{over $\Sigma$}) is a set of formulas $\bL$
that contains 
all classical tautologies, the axioms $\BX{e}({p\to q})\to(\BX{e}p\to\BX{e}q)$, for each ${e\in\Sigma}$,
and is closed under the rules of
modus ponens, 
substitution, 
and necessitation (from $\Fi$, infer $\BX{e}\Fi$, for each ${e\in\Sigma}$).
An \emph{$\bL$-frame} is a frame $F$ such that ${F\valid L}$. 
\emph{The logic of a class of frames}~$\cF$ is the set
of all formulas that are valid in~$\cF$.
A~logic $L$ is \emph{Kripke complete} if it is the logic of some class of frames.
A~logic $\bL$ has the \emph{finite model property} (FMP)
if it is the logic of some class of finite frames; or equivalently (see, e.g., \cite[Th.~3.28]{B:R:V:ML:2002})
if, for every formula ${\Fi\notin\bL}$, there is a finite model $M$ 
such that $M\mo L$ and ${M\nmodels\Fi}$.
%
%
%
For a logic $\vL$, put
\begin{center}
$\begin{array}{rcl@{\ }l@{\ }r}
\fL & = & \{\, F\,\mid F & \mbox{is a frame and} & F \models\vL\},\\
\mL & = & \{\, M\mid M & \mbox{is a model and} & M \valid \vL\}.
\end{array}$
\end{center}

Clearly, every $\Frames(\vL)$-model belongs to $\Models(\vL)$.
The converse does not hold in general; \eg, the canonical model of a non-canonical logic
$\vL$ is not a $\Frames(\vL)$-model.
But the converse holds in the following special case.
A~model $M$ is called \emph{differentiated} if
any two points in~$M$ can be distinguished by a formula.

\begin{lemma}[{\rm See e.g. \cite[Ex.~4.9]{GoldbLTC}}]\label{Lem:Fin:Diff:Model}
Let $M=(F,V)$ be a finite differentiated model. If all substitution
instances of a formula $\Fi$ are true in~$M$, then ${F\valid\Fi}$.

In particular, if ${M\models L}$, where $L$ is a logic, then ${F\valid L}$.
\end{lemma}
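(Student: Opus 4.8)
The plan is to argue by contraposition: assuming $F \nmodels \varphi$, I will exhibit a substitution instance of $\varphi$ that fails at some point of $M$, contradicting the hypothesis. The crucial ingredient is that in a finite differentiated model every subset of worlds is definable by a single formula.

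First I would establish this definability. Since $M$ is differentiated, for any two distinct points $w, u$ there is a formula true at exactly one of them; negating if necessary, we may take it true at $w$ and false at $u$. Because $W$ is finite, the conjunction $\chi_w$ of these formulas over all $u \neq w$ is again a formula, and it is true in $M$ precisely at $w$. Consequently, for an arbitrary $A \subseteq W$ the finite disjunction $\delta_A := \bigvee_{w \in A} \chi_w$ satisfies $M, v \models \delta_A$ iff $v \in A$.

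Next, suppose $F \nmodels \varphi$, so that $M', x \nmodels \varphi$ for some model $M' = (F, V')$ and some point $x$. Only the finitely many variables occurring in $\varphi$ are relevant, and I would define a substitution $\sigma$ sending each such variable $p$ to the defining formula $\delta_{V'(p)}$. A straightforward induction on the structure of a subformula $\psi$ of $\varphi$ then yields $M, v \models \sigma(\psi)$ iff $M', v \models \psi$ for every point $v$: the base case is $M, v \models \sigma(p) = \delta_{V'(p)}$ iff $v \in V'(p)$ iff $M', v \models p$, the Boolean cases are immediate, and the modal case $\BX{e}\psi$ goes through because $M$ and $M'$ are based on the same frame $F$, so the relations $R_e$ coincide. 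In particular $M, x \nmodels \sigma(\varphi)$, contradicting that all substitution instances of $\varphi$ hold throughout $M$; hence $F \models \varphi$.

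The only real obstacle is the definability step, which is exactly where finiteness and differentiatedness are both needed; the remainder is the routine ``truth by translation'' argument. For the final claim, note that if $M \models L$ then, since a logic is closed under substitution, every substitution instance of every $\varphi \in L$ lies in $L$ and is therefore true in $M$; applying the already-proved part to each such $\varphi$ gives $F \models L$.
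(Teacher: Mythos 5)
Your proof is correct and is exactly the standard argument: in a finite differentiated model every singleton (hence every subset) of worlds is definable, so any refuting valuation $V'$ on $F$ can be simulated by the substitution $p\mapsto\delta_{V'(p)}$, and the translation induction goes through because $M$ and $M'$ share the frame. The paper does not prove this lemma itself but defers to Goldblatt (Ex.~4.9), and your argument is the one that reference intends, so there is nothing to add.
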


\noindent{\bf Harrop's theorem.}
{\it A finitely axiomatizable logic with the FMP is decidable.}

\section{Filtration}\label{Sect:Filtration}

The notion of a filtration we introduce below slightly generalizes
the standard one (\cf~\cite[Def.\,2.36]{B:R:V:ML:2002}, \cite[Sect.\,5.3]{Ch:Za:ML:1997})
in the following aspect:
given a finite set of formulas $\Gamma$,
we define a filtration as a model obtained by
factoring a given model through an equivalence relation
that we allow to be \emph{finer} than the one induced by~$\Gamma$.
This modification seems to first appear in~\cite{Shehtman1987};
see also~\cite{Shehtman:AiML:2004}.

Let $M=(W,(R_e)_{e\in\Sigma},V)$ be a model and $\Gamma$ a finite $\Sub$-closed set of $\Sigma$-formulas.
An equivalence relation~$\sim$ on~$W$ is \emph{of finite index}
if the quotient set $W/{\sim}$ is finite.
The equivalence relation \emph{induced by~$\Gamma$} is defined as follows:
\begin{center}$
x\,\sim_\Gamma\, y \qquad\defeq\qquad
\forall\Fi\in\Gamma\ \bigl(\, M,x\models\Fi\ \Iff\ M,y\models\Fi \,\bigr).
$\end{center}
Clearly, $\sim_\Gamma$ is of finite index.
We say that an equivalence relation $\sim$ \emph{respects}~$\Gamma$ if
${\sim}{}\subseteq{}{\sim_{\Gamma}}$;
in other words, if for every class~${[x]_\sim\se W}$ and every formula ${\Fi\in\Gamma}$,
$\Fi$ is either true in all points of~$[x]_\sim$ or false in all points of~$[x]_\sim$.

\begin{definition}{\bf (Filtration)}\label{Def:Filtration}
By a \emph{filtration} of a model $M$ that \emph{respects} a set of formulas~$\Gamma$
(or a \emph{$\Gamma$-filtration of~$M$})
we call any model $\ff{M}=(\ff{W},(\ff{R}_e)_{e\in\Sigma},\ff{V})$
that satisfies the following conditions:
\begin{itemize}
\item
$\ff{W}=W/{}{\sim}$, for some equivalence relation of finite index $\sim$ on~$W$;
\item
 the equivalence relation $\sim$ respects~$\Gamma$, \ie, 
$x\sim y$ implies $x\sim_\Gamma y$;
\item
the valuation $\ff{V}$ is defined on the variables $p\in\Gamma$ canonically:
${\ff{x}\models p}$ $\Iff$ ${x\models p}$,
for all points ${x\in W}$, where $\ff{x}:=[x]_\sim$ denotes the $\sim$-class of a point~$x$;
\item
$\Min[e]{\sim}\se\ff{R}_e\se\Max[e]{\Gamma}$, for each ${e\in\Sigma}$.
Here $\Min[e]{\sim}$ is the $e$-th \emph{minimal filtered relation} on~$\ff{W}$, 
and $\Max[e]{\Gamma}$ is the $e$-th \emph{maximal filtered relation}
on~$\ff{W}$
induced by the set of formulas~$\Gamma$; 
they are defined in the usual way:
$$
\begin{array}{r@{\quad}c@{\quad}l}
\ff{x}\ \Min[e]{\sim}\ \ff{y} & \defeq & \exists x'\sim x\ \, \exists y'\sim y\colon\ \,x'\,R_e\, y',\\
\ff{x}\ \Max[e]{\Gamma}\ \ff{y} & \defeq &
\mbox{for every formula $\BX{e}\Fi\in\Gamma$}
\ \bigl(\,M,x\models\BX{e}\Fi\ \To\ M,y\models\Fi\,\bigr).
\end{array}
$$
\end{itemize}
If $\sim\,=\,\sim_\Phi$ for some finite set of formulas~$\Phi$, then we call
$\ff{M}$ a \emph{definable $\Gamma$-filtration} of~$M$ (\emph{through~$\Phi$}).
\end{definition}

Note that the relations $\Min[e]{\sim}$ and $\Max[e]{\Gamma}$
are well-defined and ${\Min[e]{\sim}\se\Max[e]{\Gamma}}$.
The condition ${\Min[e]{\sim}\se\ff{R}_e}$ is equivalent to that
${\forall x,y\in W}$ ($x\,R_e\,y$ $\To$ $\ff{x}\,\ff{R}_e\,\ff{y}$).
A~filtration is always a finite model.
If ${\sim\,=\,\sim_\Phi\,\se\,\sim_\Gamma}$, then w.l.o.g.\ ${\Phi\su\Gamma}$.
The following is the key lemma about filtration
(\cf~\cite[Th.\,2.39]{B:R:V:ML:2002},
\cite[Th.~5.23]{Ch:Za:ML:1997}).

\begin{lemma}[Filtration lemma]\label{Lemma:Filtration}
Let $\Gamma$ be a finite $\Sub$-closed set of formulas and~$\ff{M}$
is a $\Gamma$-filtration of a model~$M$.
Then, for all points ${x\in W}$ and all formulas ${\Fi\in\Gamma}$,
we have: \
${M,x\models\Fi}$ $\Iff$ ${\ff{M},\ff{x}\models\Fi}$.
\end{lemma}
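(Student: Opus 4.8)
The plan is to prove the Filtration Lemma by induction on the structure of the formula $\Fi\in\Gamma$, exploiting the fact that $\Gamma$ is $\Sub$-closed, so that whenever $\Fi\in\Gamma$ all of its subformulas are in $\Gamma$ as well and the induction hypothesis applies to them.

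\medskip

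\noindent\textbf{Base cases and Booleans.}
First I would handle the atomic case. For ${\Fi=\bot}$ the claim is trivial since $\bot$ holds nowhere. For ${\Fi=p}$ a propositional variable, since ${p\in\Gamma}$ the valuation $\ff{V}$ is defined canonically, so ${\ff{M},\ff{x}\models p}$ iff ${x\models p}$ by definition of a filtration; this is immediate. The Boolean step ${\Fi=\psi\to\chi}$ is routine: because $\Gamma$ is $\Sub$-closed, ${\psi,\chi\in\Gamma}$, and the induction hypothesis for $\psi$ and $\chi$ together with the truth-definition of $\to$ in both $M$ and $\ff{M}$ gives the equivalence directly.

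\medskip

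\noindent\textbf{The modal step.}
The only real content is the case ${\Fi=\BX{e}\psi}$, where again ${\psi\in\Gamma}$ by $\Sub$-closedness, so the induction hypothesis holds for $\psi$. I would prove the two directions separately, and this is where I expect the main obstacle to lie, since the two inclusions ${\Min[e]{\sim}\se\ff{R}_e\se\Max[e]{\Gamma}}$ defining a filtration are used asymmetrically.

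For the direction ${M,x\models\BX{e}\psi\To\ff{M},\ff{x}\models\BX{e}\psi}$, suppose ${M,x\models\BX{e}\psi}$ and take any $\ff{y}$ with ${\ff{x}\,\ff{R}_e\,\ff{y}}$. Since ${\ff{R}_e\se\Max[e]{\Gamma}}$, we have ${\ff{x}\,\Max[e]{\Gamma}\,\ff{y}}$; because ${\BX{e}\psi\in\Gamma}$ and ${M,x\models\BX{e}\psi}$, the definition of the maximal filtered relation yields ${M,y\models\psi}$. As ${\psi\in\Gamma}$, the induction hypothesis gives ${\ff{M},\ff{y}\models\psi}$. Since $\ff{y}$ was arbitrary, ${\ff{M},\ff{x}\models\BX{e}\psi}$.

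For the converse ${\ff{M},\ff{x}\models\BX{e}\psi\To M,x\models\BX{e}\psi}$, suppose ${\ff{M},\ff{x}\models\BX{e}\psi}$ and take any $y$ with ${x\,R_e\,y}$. Then by definition of the minimal filtered relation ${\ff{x}\,\Min[e]{\sim}\,\ff{y}}$, and since ${\Min[e]{\sim}\se\ff{R}_e}$ we get ${\ff{x}\,\ff{R}_e\,\ff{y}}$; hence ${\ff{M},\ff{y}\models\psi}$, and the induction hypothesis for $\psi$ gives ${M,y\models\psi}$. As $y$ was arbitrary, ${M,x\models\BX{e}\psi}$. The subtlety here is purely bookkeeping with the two bounding relations: the $\square$-to-filtration direction needs the upper bound $\Max[e]{\Gamma}$, while the filtration-to-$\square$ direction needs the lower bound $\Min[e]{\sim}$; one must keep track of which inclusion is invoked and ensure the induction hypothesis is legitimately available (which it always is, by $\Sub$-closedness of $\Gamma$). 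Since both directions close without any appeal to the fineness of $\sim$ beyond its respecting $\Gamma$, the generalized notion of filtration causes no extra difficulty, and the induction completes.
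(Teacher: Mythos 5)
Your proof is correct and is exactly the standard induction on the structure of $\Fi$ that the paper relies on by citing \cite[Th.~2.39]{B:R:V:ML:2002} and \cite[Th.~5.23]{Ch:Za:ML:1997} instead of spelling it out. Your observation that the argument only uses ${\sim}\subseteq{\sim_\Gamma}$ (so the generalized, possibly finer, equivalence causes no difficulty) is precisely the point the paper's generalization depends on.
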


\subsection{Admissibility of filtration}\label{Sect:Filtr:Frames}

\begin{definition}{\bf (ADF for classes of frames)}\label{Def:Admit:Filtration:Frames}
We say that a class of frames $\cF$ \emph{admits (definable) filtration}
if, for any finite \Sub-closed set of formulas~$\Gamma$
and an $\cF$-model $M$,
there exists an $\cF$-model that is a (definable) $\Gamma$-filtration of~$M$.
\end{definition}


\begin{lemma}[AF for frames implies FMP]\label{Lem:AF:Fr:FMP}\ \\
If a logic $\bL$ is Kripke complete and $\Frames(\bL)$
admits filtration, then $\bL$ has the FMP.
\end{lemma}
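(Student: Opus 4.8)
The goal is to show that if $\bL$ is Kripke complete and $\Frames(\bL)$ admits filtration, then $\bL$ has the FMP. Using the characterization of FMP recalled in the preliminaries, it suffices to show that for every formula $\Fi\notin\bL$ there is a finite model $M'$ with $M'\models\bL$ and $M'\nmodels\Fi$. So the plan is to start from a refuting model, filtrate it down to a finite one, and check that the finite model is still an $\bL$-model.

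The key steps, in order, are as follows. First, since $\bL$ is Kripke complete, $\bL$ is the logic of $\Frames(\bL)$; hence from $\Fi\notin\bL$ we obtain an $\Frames(\bL)$-model $M=(F,V)$ with $F\in\Frames(\bL)$ and $M,x_0\nmodels\Fi$ for some point $x_0$. Second, I would form the finite $\Sub$-closed set $\Gamma:=\Sub(\{\Fi\})$, which is finite because $\Fi$ has finitely many subformulas. Third, invoke the hypothesis that $\Frames(\bL)$ admits filtration: by Definition~\ref{Def:Admit:Filtration:Frames} applied to this $\Gamma$ and the $\Frames(\bL)$-model $M$, there exists an $\Frames(\bL)$-model $\ff{M}=(\ff{F},\ff{V})$ that is a $\Gamma$-filtration of $M$, with $\ff{F}\in\Frames(\bL)$. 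Fourth, apply the Filtration Lemma~\ref{Lemma:Filtration}: since $\Fi\in\Gamma$, we have $M,x_0\models\Fi \Iff \ff{M},\ff{x_0}\models\Fi$, so from $M,x_0\nmodels\Fi$ we conclude $\ff{M},\ff{x_0}\nmodels\Fi$, whence $\ff{M}\nmodels\Fi$.

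Finally, I must verify that $\ff{M}\models\bL$. This follows because $\ff{M}$ is based on the frame $\ff{F}$, and $\ff{F}\in\Frames(\bL)$ means $\ff{F}\models\bL$; since validity on a frame means truth in all models over that frame, in particular $\ff{M}\models\bL$. A filtration is always a finite model, so $\ff{M}$ is the required finite $\bL$-model refuting $\Fi$, and the FMP characterization gives the result.

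The argument is almost entirely a matter of chaining the available lemmas, so there is no deep obstacle. The one point that genuinely carries the weight is the admissibility hypothesis: without it the filtration $\ff{M}$ need not be based on a frame validating $\bL$, and then $\ff{M}\models\bL$ could fail. In other words, the substantive content is packaged into Definition~\ref{Def:Admit:Filtration:Frames}, which guarantees that the filtrated frame stays inside $\Frames(\bL)$; the rest is the Filtration Lemma plus the standard FMP reformulation.
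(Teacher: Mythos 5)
Your proof is correct and is precisely the standard argument the paper intends (the paper states this lemma without proof): use Kripke completeness to get a refuting $\Frames(\bL)$-model, filtrate through $\Sub(\{\Fi\})$ staying inside $\Frames(\bL)$ by the admissibility hypothesis, and apply the Filtration Lemma. All steps check out, including the key observation that $\ff{F}\valid\bL$ yields $\ff{M}\models\bL$.
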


\begin{definition}{\bf (ADF for classes of models)}\label{Def:Admit:Filtration:Models}
We say that a class of models $\cM$ \emph{admits  (definable) filtration}
if, for any finite \Sub-closed set of formulas~$\Gamma$
and a model ${M{\in}\cM}$,
there a model in $\cM$ that is a (definable) $\Gamma$-filtration of~$M$.

\end{definition}


\begin{lemma}[AF for models implies FMP]\label{Lem:AF:Mo:FMP} \ \\
%
If $\mL$ admits filtration, then the logic $\bL$ has the FMP and hence is complete.
\end{lemma}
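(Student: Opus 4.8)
The plan is to verify the model-based characterization of the FMP recalled in Section~\ref{Sect:Prelim}: for every formula $\Fi\notin\bL$ one must exhibit a finite model $M$ with $M\models\bL$ and $M\nmodels\Fi$. Once this is done the FMP follows immediately, and since a logic with the FMP is by definition the logic of a class of finite frames, it is in particular the logic of a class of frames and hence Kripke complete; this delivers the closing clause ``and hence is complete'' for free. (We may assume $\bL$ is consistent, as otherwise there are no non-theorems and both claims are vacuous.)

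So fix a non-theorem $\Fi\notin\bL$. The first step is to find \emph{some} model of $\bL$, not necessarily finite, that refutes~$\Fi$. Here I would invoke the canonical model $M$ of~$\bL$: by the standard truth lemma it validates every theorem of~$\bL$, so $M\models\bL$ and thus $M\in\mL$, while the $\bL$-consistency of $\{\neg\Fi\}$ yields a maximal consistent set containing~$\neg\Fi$, i.e.\ a point $x_0$ at which $\Fi$ fails. Hence $M,x_0\nmodels\Fi$. This is precisely the familiar fact that every normal modal logic is complete with respect to its class of models~$\mL$.

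The second step applies the hypothesis. Put $\Gamma:=\Sub(\{\Fi\})$, which is finite and $\Sub$-closed. Since $\mL$ admits filtration, there is a model $\ff{M}\in\mL$ that is a $\Gamma$-filtration of~$M$; being a filtration, $\ff{M}$ is finite, and being a member of $\mL$ it satisfies $\ff{M}\models\bL$. By the Filtration Lemma~\ref{Lemma:Filtration}, writing $\ff{x_0}$ for the $\sim$-class of $x_0$, we have $\ff{M},\ff{x_0}\models\Fi\Iff M,x_0\models\Fi$; as the right-hand side fails, so does the left, whence $\ff{M}\nmodels\Fi$. Thus $\ff{M}$ is the required finite model: it validates $\bL$ and refutes~$\Fi$. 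This establishes the FMP, and completeness follows as noted above.

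I expect the only genuinely nontrivial ingredient to be the first step, the passage from ``$\Fi\notin\bL$'' to ``$\Fi$ is refuted in some model of~$\bL$''. This is exactly where the canonical model (equivalently, the completeness of $\bL$ with respect to~$\mL$) is needed; everything after it is a direct application of the admits-filtration hypothesis together with the Filtration Lemma and requires no further construction. Note in particular that the weaker, non-definable form of the hypothesis suffices, since for the FMP we only need the filtration to be finite and to lie in~$\mL$, not to arise from a definable equivalence.
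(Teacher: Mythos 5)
Your proof is correct and follows exactly the route the paper intends: the paper leaves this lemma without an explicit proof, but its remark immediately afterwards (``any normal logic $\bL$ is complete \wrt the class of its models $\Models(\bL)$'') points to precisely your argument --- refute a non-theorem in the canonical model, filtrate through $\Sub(\{\Fi\})$ using the hypothesis, and apply the Filtration Lemma~\ref{Lemma:Filtration}. Your closing observations (that the non-definable form of the hypothesis suffices, and that the passage from finite models to finite frames is covered by the equivalence of the two FMP formulations stated in Section~\ref{Sect:Prelim}) are accurate.
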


Note that this lemma does not involve the completeness assumption,
since any normal logic $\bL$ is complete \wrt the class of its models $\Models(\bL)$.

We have two variants of the notion ``a logic $\bL$ admits (definable) filtration'':
\\[0.5ex](I) \ the class of frames $\fL$ admits (definable) filtration (Definition~\ref{Def:Admit:Filtration:Frames});
\\(II) \ the class of models $\mL$ admits (definable) filtration (Definition~\ref{Def:Admit:Filtration:Models}).

\smallskip
In both variants, we filtrate a model $M=(F,V)$ into a model $\fM=(\fF,\fV)$.
The precondition (${F\valid\bL}$) in (I) is stronger than that (${M\models\bL}$) in~(II).
The postcondition (${\fF\valid\bL}$) in (I) is stronger than (${\fM\models\bL}$) in~(II), too.
However, we can always make sure that the finite model $\fM$ is differentiated.
Then ${\fM\models\bL}$ iff ${\fF\models\bL}$.
Thus, (II) is stronger than~(I), as the following lemma states.

\begin{lemma}[ADF for models implies ADF for frames]\label{Lem:ADF:Mod:Fr}\ \\
For any logic $\bL$, if $\mL$ admits (definable) filtration, then so does $\fL$.
\end{lemma}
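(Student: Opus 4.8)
The plan is to prove the contrapositive-free direction directly: assume $\mL$ admits (definable) filtration and show $\fL$ does too. So I fix a finite $\Sub$-closed set of formulas $\Gamma$ and an $\fL$-model $M=(F,V)$, where ${F\valid\bL}$; my goal is to produce an $\fL$-model that is a (definable) $\Gamma$-filtration of $M$. The first observation is that the precondition of variant (II) is weaker: since ${F\valid\bL}$ implies ${M\valid\bL}$, the model $M$ belongs to $\mL$. Hence by the assumption on $\mL$, there exists a model ${\fM=(\fF,\fV)\in\mL}$ that is a (definable) $\Gamma$-filtration of $M$, \ie\ ${\fM\valid\bL}$ and $\fM$ satisfies all the conditions of Definition~\ref{Def:Filtration}.

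The gap I now have to close is that $\fM\valid\bL$ only gives me that $\bL$ is valid in the \emph{model} $\fM$, whereas to witness ADF for $\fL$ I need the underlying \emph{frame} $\fF$ to satisfy ${\fF\valid\bL}$. This is exactly where Lemma~\ref{Lem:Fin:Diff:Model} enters: if $\fM$ is finite and differentiated, then ${\fM\valid\bL}$ upgrades to ${\fF\valid\bL}$, so $\fF$ is an $\bL$-frame and $\fM$ becomes a genuine $\fL$-model, as required. A filtration is always finite, so finiteness is free; the only real work is to ensure differentiatedness, which is precisely the point flagged in the paragraph preceding the lemma (``we can always make sure that the finite model $\fM$ is differentiated'').

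The main obstacle, then, is the differentiatedness step, and I would handle it as follows. A model is differentiated when distinct points are separated by some formula. If the filtration I obtained is not yet differentiated, I refine the equivalence relation $\sim$ underlying $\fM$ to the finer relation $\sim_\Gamma$ (or, in the definable case, to $\sim_\Phi$ for the defining set $\Phi$), and merge any points of $\fW$ that are still indistinguishable by $\Gamma$. Collapsing indistinguishable points does not disturb the filtration conditions: the valuation clause is stated only for variables in $\Gamma$, the inclusion ${\Min[e]{\sim}\se\ff{R}_e\se\Max[e]{\Gamma}}$ is preserved since the bounding relations depend only on $\sim_\Gamma$-classes, and the Filtration Lemma~\ref{Lemma:Filtration} guarantees the truth values of $\Gamma$-formulas are unchanged. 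The resulting model is differentiated by construction and remains a (definable) $\Gamma$-filtration of $M$ validating $\bL$.

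Finally I assemble the pieces: the refined $\fM$ is finite, differentiated, and satisfies ${\fM\valid\bL}$, so Lemma~\ref{Lem:Fin:Diff:Model} yields ${\fF\valid\bL}$; thus $\fF\in\fL$ and $\fM$ is the desired $\fL$-model that is a (definable) $\Gamma$-filtration of $M$. Since $\Gamma$ and $M$ were arbitrary, $\fL$ admits (definable) filtration. The definable and non-definable cases run in parallel, the only difference being whether the separating relation used in the refinement is presented as $\sim_\Phi$ for a formula set $\Phi$; because $\sim_\Gamma$ is itself definable (through $\Gamma$), the refinement keeps us inside the definable regime whenever we started there.
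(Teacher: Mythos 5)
Your overall architecture is exactly the paper's: pass from $F\valid\bL$ to $M\in\mL$, obtain a filtration $\fM$ with $\fM\models\bL$, make it differentiated, and invoke Lemma~\ref{Lem:Fin:Diff:Model} to upgrade $\fM\models\bL$ to $\fF\valid\bL$. You also correctly locate the only nontrivial point, namely differentiatedness. But your treatment of that point has a genuine gap. You propose to merge the points of $\fM$ that are indistinguishable by $\Gamma$, \ie\ to coarsen $\sim$ to $\sim_\Gamma$, and you then assert that the resulting quotient model still validates $\bL$. That assertion is not justified and is in general false: two points in the same $\sim_\Gamma$-class may disagree on variables outside $\Var(\Gamma)$ or on formulas of greater modal depth, so the quotient map is neither a p-morphism nor truth-preserving for formulas outside $\Gamma$; after merging, an $\bL$-theorem (under some substitution involving such formulas) can fail. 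Since $\fM\models\bL$ is precisely the hypothesis that Lemma~\ref{Lem:Fin:Diff:Model} needs, losing it at this step breaks the whole argument. (Differentiatedness itself and the min/max inclusions do survive your coarsening, as you argue; it is validity of $\bL$ that does not.)

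The paper's Appendix Lemma~\ref{Lem:Differentiated} closes this gap differently: instead of collapsing by $\sim_\Gamma$, it collapses the finite filtration by \emph{full modal equivalence} (a filtration through the set of all formulas). Because merged points then agree on \emph{every} formula, the Filtration Lemma gives pointwise modal equivalence between the old and new models, so $\models\bL$ is preserved; distinct classes are separated by some formula, so the result is differentiated; and each full-equivalence class is a finite union of definable $\sim_\Phi$-classes, so definability is also preserved. To repair your proof you would need to replace your $\sim_\Gamma$-collapse by this full-modal-equivalence collapse (or otherwise supply an argument that your quotient still validates $\bL$), and in the definable case check, as the paper does in its Claim~5, that the coarser classes remain definable.
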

\begin{proof}
Take any finite $\Sub$-closed set of formulas $\Gamma$
and a model $M=(F,V)$ with ${F\valid\bL}$. Then 
${M\in\mL}$. Since $\mL$ A(D)F,
the model $M$ has a (definable) $\Gamma$-filtration $\fM=(\fF,\fV)$
with ${\fM\models\bL}$.
The model $\fM$ is finite and, 
without loss of generality, differentiated,
by Lemma~\ref{Lem:Differentiated} (in Appendix).
Then ${\smash{\fF}\valid\bL}$, by Lemma~\ref{Lem:Fin:Diff:Model}.
Thus, $\fL$ admits (definable) filtration.
\end{proof}

Next we prove that, for the \emph{canonical} logics, the notions (I) and (II) coincide,
if we consider \emph{definable} filtration.
To simplify notation, we work with the unimodal case.
Recall that one can build
the \emph{canonical} frame $F_T=(W_T,R_T)$
and model $M_T=(F_T,V_T)$
not only for a (consistent) normal \emph{logic},
but more generally for a \emph{normal theory}~$T$ (which contains all theorems of $\bK$
and is closed under monus ponens and necessitation).
Any point ${x\in W_T}$ is a
consistent (never ${A,\neg A\in x}$) complete (always ${A\in x}$ or ${\neg A\in x}$)
theory (\ie, closed under modus ponens) containing~$T$.
A~logic $L$ is called \emph{canonical} if ${F_L\valid L}$.
The following well-known fact is useful.

\begin{lemma}[Canonical generated submodel]
\label{Lem:Canon:Gen:Sub}
If ${T\se T'}$ are consistent normal theories,
then
$M_{T'}$ is a generated submodel of 
$M_T$.
Similarly for frames.
\end{lemma}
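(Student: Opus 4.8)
The plan is to verify directly the four defining conditions of a generated submodel from the construction of the canonical model, the only substantial point being upward closure of the smaller domain, which rests on closure of $T'$ under necessitation.

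First I would record the inclusion of domains. Since ${T\se T'}$, every complete consistent theory that contains $T'$ also contains $T$; hence ${W_{T'}\se W_T}$. The matching of relations and valuations is then immediate and independent of the base theory: for points ${x,y\in W_{T'}}$ the canonical accessibility relation is defined by the same clause ``${\B A\in x}$ implies ${A\in y}$'' whether we view $x,y$ as points of $M_{T'}$ or of $M_T$, so ${R_{T'}=R_T\cap(W_{T'}{\times}W_{T'})}$; likewise ${V_{T'}(p)=\{x\in W_{T'}\mid p\in x\}=V_T(p)\cap W_{T'}}$.

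The heart of the argument is to show that $W_{T'}$ is $R_T$-closed, \ie\ that ${x\in W_{T'}}$ and ${x\,R_T\,y}$ imply ${y\in W_{T'}}$. Take any ${A\in T'}$. Since $T'$ is a normal theory, it is closed under necessitation, so ${\B A\in T'\se x}$. By the definition of the canonical relation, ${x\,R_T\,y}$ then yields ${A\in y}$. As ${A\in T'}$ was arbitrary, ${T'\se y}$, and since $y$ is already a complete consistent theory (a point of $W_T$, because $R_T$ is a relation on $W_T$ and ${x\in W_{T'}\se W_T}$), we conclude ${y\in W_{T'}}$. Together with the three facts above, this establishes that $M_{T'}$ is a generated submodel of $M_T$.

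Finally, the frame version follows from exactly the facts that do not mention the valuation: the domain inclusion ${W_{T'}\se W_T}$, the restriction ${R_{T'}=R_T\cap(W_{T'}{\times}W_{T'})}$, and the $R_T$-closure of $W_{T'}$ just proved; hence $F_{T'}$ is a generated subframe of $F_T$. I expect no real obstacle beyond the upward-closure step, and there the essential and sole ingredient is that a normal theory is closed under necessitation, so the whole proof is a matter of spelling out this closure property and the definitional coincidences.
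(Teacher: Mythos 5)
Your proposal is correct and follows essentially the same route as the paper's proof: the key step in both is that for ${A\in T'}$, closure of $T'$ under necessitation gives ${\B A\in T'\se x}$, whence ${A\in y}$ by the definition of the canonical relation, so $W_{T'}$ is upward closed under $R_T$. You merely spell out the routine bookkeeping (domain inclusion, restriction of relation and valuation) that the paper leaves implicit.
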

\begin{proof}
Assume $x\in W_{T'}$, $y\in{W_T}$, and $x\,R_T y$. To prove that ${y\in W_{T'}}$,
\ie, ${T'\se y}$, take any formula ${A\in T'}$. By normality ${\B A\in T'}$.
Since ${T'\se x}$, we have ${\B A\in x}$. By definition of $R_T$, we obtain ${A\in y}$.
\end{proof}

A typical example of a normal theory is the theory of a model $T=\Th(M)$.
For a model $M=(W,R,V)$, consider the canonical model $M_T$ of its theory
and the \emph{canonical mapping} $t$ from $M$ to $M_T$ defined, for ${a\in W}$, by
\begin{center}
$t(a)=\Th(M,a)\in W_T$.
\end{center}
It is monotonic ($a\,R\,b$ $\To$ $t(a)\,R_T\,t(b)$), but in general it is
neither surjective, nor a p-morphism.
The next lemma shows what happens to the canonical mapping if we filtrate 
both $M$ and $M_T$ through a finite set of formulas~$\Phi$.

\begin{lemma}
\label{Lem:Filtr:canon:map}
Under the above conditions,
any finite set of formulas $\Phi$ induces a bijection
between the 
quotient sets
$W{/}{{\sim}_\Phi}$ and $W_T{/}{{\sim}_\Phi}$ defined, for ${a\in W}\!$,~by
\begin{center}
$f([a]_{\sim_\Phi})\ :=\ [t(a)]_{\sim_\Phi}$.
\end{center}
\end{lemma}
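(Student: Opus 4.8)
The plan is to reduce everything to a single translation identity between truth in $M$ and truth in the canonical model $M_T$ along the map $t$. For every formula $\vf$ and every ${a\in W}$ we have $\vf\in t(a)$ iff $M,a\mo\vf$, directly from the definition $t(a)=\Th(M,a)$; and by the truth lemma for the canonical model, $M_T,t(a)\mo\vf$ iff $\vf\in t(a)$. Chaining these two equivalences yields $M_T,t(a)\mo\vf\ \Iff\ M,a\mo\vf$ for every $\vf$. This identity is the engine driving the whole argument.

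From it, well-definedness and injectivity of $f$ will follow at once. Restricting the equivalence to formulas $\vf\in\Phi$ and comparing two points ${a,b\in W}$, I get that $a\sim_\Phi b$ (computed in $M$) holds iff $t(a)\sim_\Phi t(b)$ (computed in $M_T$, where by the truth lemma $\sim_\Phi$ just compares the $\Phi$-parts of the theories). Hence $[a]_{\sim_\Phi}=[b]_{\sim_\Phi}$ iff $[t(a)]_{\sim_\Phi}=[t(b)]_{\sim_\Phi}$, which says precisely that the assignment $[a]_{\sim_\Phi}\mapsto[t(a)]_{\sim_\Phi}$ is both well defined and injective.

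The remaining task, surjectivity, is where the real work lies: every $\sim_\Phi$-class of $W_T$ must be hit. Given ${x\in W_T}$, I would form the characteristic conjunction of its $\Phi$-type, $\chi_x:=\AND\{\vf\in\Phi : \vf\in x\}\land\AND\{\neg\vf : \vf\in\Phi,\ \vf\notin x\}$, a finite conjunction since $\Phi$ is finite. Each conjunct lies in $x$ (using completeness of $x$ for the negated ones), and since $x$ is closed under conjunction, $\chi_x\in x$. The key step is to show $\chi_x$ is satisfiable in $M$: if $\chi_x$ were false at every point of $M$, then $\neg\chi_x$ would be valid on $M$, hence $\neg\chi_x\in\Th(M)=T\se x$, contradicting $\chi_x\in x$ together with the consistency of $x$. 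So there is ${a\in W}$ with $M,a\mo\chi_x$, and reading off the conjuncts gives $M,a\mo\vf\ \Iff\ \vf\in x$ for every $\vf\in\Phi$, that is $t(a)\sim_\Phi x$, whence $f([a]_{\sim_\Phi})=[x]_{\sim_\Phi}$.

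The only genuine obstacle is this last satisfiability argument, and it is exactly the place where the hypothesis $T=\Th(M)$ is used: it guarantees that no $\Phi$-type occurring in the canonical model $M_T$ is left unrealized in $M$. Everything else is bookkeeping with the definitions of $t$ and $\sim_\Phi$ and the canonical truth lemma.
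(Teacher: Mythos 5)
Your proposal is correct and follows essentially the same route as the paper's own proof: the well-definedness/injectivity part is the same chain of equivalences, and your characteristic conjunction $\chi_x$ is exactly the paper's formula $A=\AND(x\cap\Phi')$ with $\Phi'=\Phi\cup\{\neg B\mid B\in\Phi\}$, used with the same satisfiability-via-consistency argument for surjectivity. No gaps.
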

\begin{proof}
This technical proof is put in Appendix, see Lemma~\ref{Lem:Filtr:canon:map:App}.
\end{proof}

\begin{theorem}[ADF for frames implies ADF for models]\label{Th:ADF:Fr:Mod}
If $L$ is a ca\-no\-nical logic $L$, then $\fL$ admits definable filtration iff so does $\mL$.
\end{theorem}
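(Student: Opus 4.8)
\medskip
\noindent\textbf{Proof plan.}
The backward direction---ADF for $\mL$ implies ADF for $\fL$---is already Lemma~\ref{Lem:ADF:Mod:Fr}, which holds for every logic, so all the work is in the forward direction, and this is where canonicity of $L$ enters. The plan is as follows. Assume $\fL$ admits definable filtration; to show $\mL$ does, fix a finite $\Sub$-closed set of formulas $\Gamma$ and a model $M=(W,R,V)$ with $M\mo L$, and form the theory $T=\Th(M)$ together with its canonical model $M_T=(F_T,V_T)$ and the canonical map $t(a)=\Th(M,a)$. Since $L\se T$ and $L$ is canonical, $F_T$ is a generated subframe of the canonical frame $F_L$ (Lemma~\ref{Lem:Canon:Gen:Sub}) and $F_L\mo L$, so $F_T\mo L$; hence $M_T$ is an $\fL$-model. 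Applying the assumed ADF for $\fL$ to $M_T$ and $\Gamma$ yields a definable $\Gamma$-filtration $\ff{M}_T=(\ff{F}_T,\ff{V}_T)$ of $M_T$, through some finite $\Phi$ (w.l.o.g.\ $\Phi\su\Gamma$), whose frame satisfies $\ff{F}_T\mo L$. It remains to transport this filtration back along $t$ to a definable $\Gamma$-filtration of $M$ itself.

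For the transport I would invoke the bijection $f\colon W/{\sim_\Phi}\to W_T/{\sim_\Phi}$, $f([a]_{\sim_\Phi})=[t(a)]_{\sim_\Phi}$, furnished by Lemma~\ref{Lem:Filtr:canon:map}, and define a model $\ff{M}=(\ff{W},(\ff{R}_e),\ff{V})$ on $\ff{W}=W/{\sim_\Phi}$ by pulling the filtered relations of $\ff{M}_T$ back along $f$: set $\ff{x}\,\ff{R}_e\,\ff{y}$ iff $f(\ff{x})\,\ff{R}_{T,e}\,f(\ff{y})$, and let $\ff{V}$ be the canonical valuation on the variables of $\Gamma$. Because $\Phi\su\Gamma$, the relation $\sim_\Phi$ respects $\Gamma$, so $\ff{M}$ is a legitimate candidate for a definable $\Gamma$-filtration of $M$ (through $\Phi$), and by construction $f$ is an isomorphism from the frame of $\ff{M}$ onto $\ff{F}_T$.

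The heart of the argument is to check that this pulled-back $\ff{R}_e$ really lies between the minimal and maximal filtered relations of $M$. The two ingredients are that $t$ preserves the truth of \emph{all} formulas (the canonical truth lemma, since $M,a\mo\psi$ iff $\psi\in t(a)$ iff $M_T,t(a)\mo\psi$) and that $t$ is monotonic. Truth preservation makes $f$ an isomorphism for the maximal relations: $\ff{x}$ and $f(\ff{x})$ verify the same $\Gamma$-formulas, so $f(\ff{x})\,\Max[e]{\Gamma}\,f(\ff{y})$ in $M_T$ iff $\ff{x}\,\Max[e]{\Gamma}\,\ff{y}$ in $M$; combined with $\ff{R}_{T,e}\se\Max[e]{\Gamma}$ for the filtration $\ff{M}_T$, this yields $\ff{R}_e\se\Max[e]{\Gamma}$ for $M$. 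Monotonicity, together with truth preservation, gives the other inclusion: a witness $x'\,R_e\,y'$ for $\ff{x}\,\Min[e]{\sim_\Phi}\,\ff{y}$ in $M$ maps to $t(x')\,R_T\,t(y')$ with $t(x')\sim_\Phi t(x)$ and $t(y')\sim_\Phi t(y)$, so $f(\ff{x})\,\Min[e]{\sim_\Phi}\,f(\ff{y})$ in $M_T$; since $\Min[e]{\sim_\Phi}\se\ff{R}_{T,e}$, we get $\Min[e]{\sim_\Phi}\se\ff{R}_e$ for $M$. Thus $\ff{M}$ is a definable $\Gamma$-filtration of $M$, and since its frame is isomorphic to $\ff{F}_T\mo L$, validity of $L$ is inherited, so $\ff{M}\mo L$ and $\ff{M}\in\mL$; this establishes ADF for $\mL$.

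The main obstacle I anticipate is precisely this last verification, and it is where definability is indispensable: the bijection $f$ exists only because both quotients are taken modulo the \emph{same} definable relation $\sim_\Phi$, whose classes are determined intrinsically by truth of $\Phi$-formulas and are therefore matched by the truth-preserving map $t$. A non-definable filtration of $M_T$ could not be pulled back, since $t$ is in general neither injective nor surjective; this is exactly why the theorem is stated for definable filtration and for canonical $L$.
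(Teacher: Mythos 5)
Your proof is correct and follows essentially the same route as the paper: the backward direction is delegated to Lemma~\ref{Lem:ADF:Mod:Fr}, and the forward direction filtrates the canonical model $M_T$ of $T=\Th(M)$ (legitimate by canonicity and Lemma~\ref{Lem:Canon:Gen:Sub}), then transports the result back along the bijection of Lemma~\ref{Lem:Filtr:canon:map}, verifying the minimal and maximal inclusions via monotonicity of $t$ and the canonical truth lemma exactly as the paper does. Your closing remark on why definability is essential for the pullback also matches the paper's motivation for restricting to definable filtration.
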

\begin{proof}
($\Leftarrow$) By Lemma~\ref{Lem:ADF:Mod:Fr}.
($\To$) {\sc Idea:} in order to filtrate a model ${M\models L}$,
we filtrate the canonical model $M_T$ of its theory $T=\Th(M)$ and then use the bijection
from Lemma~\ref{Lem:Filtr:canon:map} to transfer the filtration back to~$M$.

Take a finite $\Sub$-closed set of formulas~$\Gamma$ and
a model $M=(W,R,V)$ with ${M\models L}$.
Its theory $T=\Th(M)$ contains~$L$,
hence $F_T$ is a generated subframe of~$F_L$, by Lemma~\ref{Lem:Canon:Gen:Sub}.
Since $L$ is canonical,  we have ${F_L\valid L}$ and so ${F_T\valid L}$.
Thus, $M_T$ is a $\fL$-model and, by assumption, we can filtrate it.

Therefore, the model $M_T$ has a $\Gamma$-filtration $\ff{M_T}=(\ff{W_T},\ff{R_T},\ff{V_T})$
(through some finite set of formulas ${\Phi\su\Gamma}$)
with ${\ff{F_T}\valid L}$.
By Lemma~\ref{Lem:Filtr:canon:map}, there is a
bijection $f$ between the finite sets $\fW=(W/{\sim_\Phi})$ and $\ff{W_T}=(W_T/{\sim_\Phi})$.
Now we build a model $\ff{M}=(\ff{W},\ff{R},\ff{V})$ isomorphic to~$\ff{M_T}$,
by putting, for all ${a,b\in W}$:
\begin{center}
$\ff{a}\,\fR\,\ff{b}$ iff $f(\ff{a})\,\ff{R_T}\,f(\ff{b})$; \qquad
${\ff{a}\models p}$ iff $f(\ff{a})\models p$, for all variables ${p\in\Gamma}$.
\end{center}

%
Since the frames $\ff{F}$ and $\ff{F_T}$ are isomorphic
and ${\ff{F_T}\valid L}$, we have ${\fF\valid L}$.
It remains to prove that $\fM$ is a $\Gamma$-filtration (through~$\Phi$) of~$M$.
Below, we denote $x=t(a)=\Th(M,a)$ and $y=t(b)=\Th(M,b)$,
so that ${f(\fa)=\fx}$ and ${f(\fb)=\fy}$.

{\sf (var)} Let us check that
${\ff{M},\ff{a}\models p}$ iff ${M,a\models p}$, \ for all ${p\in\Gamma}$.
We have:
\begin{center}
       $\fM,\fa\models p$
\ \ $\Iff$ \ \ $\ff{M_T},\fx\models p$
\ \ $\Iff$ \ \ $M_T,x\models p$
\ \ $\Iff$ \ \ $p\in x$
\ \ $\Iff$ \ \ $M,a\models p$.
\end{center}

{\sf (min)} We check that $\MinR{\sim_\Phi}\,\se\,\fR$, \ie,
$\forall{a,b\in W}$ ($a\,R\,b$ $\To$ $\fa\,\fR\,\fb$).

We use the monotonicity of~$t(\cdot)$ and the condition {\sf(min)} for $\ff{R_T}$:
\begin{center}
$a\,R\,b$
\ \ $\TO$ \ \ $t(a)\,R_T\,t(b)$
\ \ $\Iff$ \ \ $x\,R_T\,y$
\ \ $\TO$ \ \ $\fx\,\ff{R_T}\,\fy$
\ \ $\Iff$ \ \ $\fa\,\fR\,\fb$.
\end{center}

{\sf (max)} We check that $\fR\,\se\,\MaxR{\Gamma}$.
Assume $\fa\,\fR\,\fb$. Then $\fx\,\ff{R_T}\,\fy$.

By the condition {\sf(max)} for $\ff{R_T}$,
we have $\fx\,((R_T)^{\sf{max}}_\Gamma)\,\fy$.

We need to show that $\fa\,\MaxR{\Gamma}\,\fb$.
For any formula ${\B A\in\Gamma}$, we have:
\begin{center}
$M,a\models\B A$ $\Iff$ $\B A\in x$ $\Iff$ $M_T,x\models\B A$
$\To$ $M_T,y\models A$ $\Iff$ $A\in y$ $\Iff$ $M,b\models A$.
\end{center}
This completes the proof of the theorem.
\end{proof}

\section{Logics with the transitive closure modality}\label{Sect:Complete:TransCl}

In this section, $L\se\Fm(\B)$ is a normal unimodal logic. 
Let $L^\BP\se\Fm(\B,\BP)$ be the minimal normal logic 
that extends $L$ with the following axioms 
describing
the interaction
between the modality $\B$ and the \emph{transitive closure} modality $\BP$:
%
\begin{center}$
\mathsf{(A1)}\ \BP p \to \B p,
\qquad
\mathsf{(A2)}\ \BP p \to \B\BP p,
\qquad
\mathsf{(A3)}\ \BP(p\to\B p)\to (\B p \to \BP p).
$\end{center}

Segerberg~\cite{Segerberg1977} (see also~\cite{Segerberg1982,Segerberg:1993:PDL})
and later Kozen and Parikh~\cite{KozenParikh1981}
proved that the logic $\bK^\BP$ (and even PDL) is complete and 
has the FMP; in other words, it is the logic of the class of
finite frames of the form $(W,R,R^+)$; hence it is decidable (more exactly,
{\sc ExpTime}-complete).
Note that the logic $\bK^\BP$ is not canonical:
indeed, canonical logics are compact;
however, the set of formulas $\Gamma=\{\B^n p\mid n\ge1\}\cup\{\neg\BP p\}$
is unsatisfiable (in the class of $\bK^\BP$-frames),
although each finite subset of~$\Gamma$ is satisfiable.
So even for simple logics we cannot use
canonical models as a method of obtaining completeness.



To the best of our knowledge,
up to now, there were no general results on the completeness and decidability
for the $\BP$-companions of logics other than~$\bK$.
Here we obtain one such result.
We give a condition on~$L$ sufficient for the completeness of~$L^\BP$.
The condition is strong enough and guarantees not only the completeness,
but the FMP 
of~$L^\BP$; this limits the scope of our approach.

For simplicity, in this section we assume that $L$ is unimodal.
The results transfer easily to multi-modal logics.
Given a unimodal frame $F=(W,R)$, we denote 
$F^{\oplus}=(W,R,R^+)$.
Given a class of unimodal frames~$\cF$, we denote
$\cF^{\oplus}=\{F^{\oplus}\mid{F\in\cF}\}$. 
Similarly for a model~$M^{\oplus}$ and a class of models~$\cM^{\oplus}$.

\begin{lemma}
$(W,R,S)\valid\{(A1),(A2),(A3)\}$ \ iff \ $R^+=S$.
\end{lemma}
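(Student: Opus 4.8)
The plan is to prove the equivalence by establishing the two inclusions $R^+\se S$ and $S\se R^+$, reading off from each axiom the relational condition it imposes on the pair $(R,S)$. The first two axioms are ordinary (Sahlqvist) frame correspondences: (A1) is valid on $(W,R,S)$ exactly when $R\se S$, and (A2) exactly when $R\circ S\se S$; both equivalences I would justify by the usual minimal-valuation argument (to falsify, set $p$ false at a single witness point). The induction axiom (A3) is not first-order; its ``correspondent'' is the non-elementary condition $S\se R^+$, so it must be handled by a direct model-theoretic argument in both directions.

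For the direction ($\Leftarrow$), assume $S=R^+$ and verify each axiom on $(W,R,R^+)$. Since $R\se R^+$, (A1) holds, and since $R^+$ is transitive and contains $R$ we get $R\circ R^+\se R^+$, giving (A2). For (A3) I would fix a model and a point $x$ with $x\models\BP(p\to\B p)$ and $x\models\B p$, and show $x\models\BP p$. Given any $u$ with $x\,R^+\,u$, choose an $R$-path $x=v_0\,R\,v_1\,R\cdots R\,v_n=u$ with $n\ge1$ and prove $v_i\models p$ for every $i\ge1$ by induction on $i$: the base $v_1\models p$ follows from $\B p$ at $x$; in the step, each $v_i$ with $i\ge1$ is itself an $R^+$-successor of $x$, so $\BP(p\to\B p)$ gives $v_i\models p\to\B p$, whence $v_i\models p$ forces $v_{i+1}\models p$. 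Thus $u=v_n\models p$, so $x\models\BP p$.

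For ($\Rightarrow$), assume all three axioms are valid. From (A1) and (A2) I get $R\se S$ and $R\circ S\se S$; an easy induction then yields $R^n\se S$ for all $n\ge1$, hence $R^+\se S$. For the reverse inclusion I would argue contrapositively: suppose $(x,u)\in S\setminus R^+$ and falsify (A3) at $x$ using the reachability valuation $V(p)=\{w\mid x\,R^+\,w\}$. Under it, $x\models\B p$ because $R\se R^+$, and $x\models\BP(p\to\B p)$ because if an $S$-successor $w$ of $x$ satisfies $p$, i.e.\ $x\,R^+\,w$, then every $R$-successor $w'$ of $w$ satisfies $x\,R^+\,w'$. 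Yet the $S$-successor $u$ lies outside $V(p)$, so $\BP p$ fails at $x$, contradicting the validity of (A3). Hence $S\se R^+$, and combining the inclusions gives $S=R^+$.

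The step I expect to be the main obstacle is the fixpoint content of (A3); the conditions for (A1) and (A2) are routine local correspondences. In ($\Leftarrow$) the delicate point is ensuring that every intermediate vertex $v_i$ on the $R$-path is genuinely an $R^+$-successor of $x$, so that $\BP(p\to\B p)$ is applicable there. In ($\Rightarrow$) the crux is choosing the reachability set as the valuation and checking that $\BP(p\to\B p)$ holds — which is precisely where the closure of $R^+(x)$ under one further $R$-step is used.
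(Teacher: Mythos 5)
Your proof is correct and follows the standard decomposition that the paper itself only records as a known fact: Lemma~\ref{Lem:Segerberg:Axioms:Semantics} in the Appendix lists exactly your correspondences for (A1) and (A2) and the one-directional consequence of (A3), and your two inductions (along an $R$-path for the ($\Leftarrow$) direction, and the reachability valuation $V(p)=R^+(x)$ for the ($\Rightarrow$) direction) are the intended arguments. One small wording caveat: $S\subseteq R^+$ is not the frame correspondent of (A3) alone --- the paper explicitly notes that the converse implication fails --- but this does not affect your proof, since you never use that false converse and instead verify (A3) directly under the assumption $S=R^+$.
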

\begin{proof}
This is a known fact.
Lemma~\ref{Lem:Segerberg:Axioms:Semantics} (in Appendix) gives more details.
\end{proof}

\begin{lemma}\label{Lem:Models:LP}
{\bf (a)} $\mL^\oplus \subseteq \Models(L^\BP)$.
\qquad
{\bf (b)} $\fL^\oplus = \Frames(L^\BP)$.
\end{lemma}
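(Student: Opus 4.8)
The plan is to treat the two parts separately, exploiting that frame validity is robust under substitution whereas model validity is not; this is exactly the asymmetry that makes (b) an equality but (a) only an inclusion.

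For (b) I would argue both inclusions directly. For $\fL^\oplus\se\Frames(L^\BP)$, take $F=(W,R)$ with $F\valid L$ and form $F^\oplus=(W,R,R^+)$. Since the truth of a $\B$-formula at a point does not depend on the extra relation, $F^\oplus\valid\varphi$ for every $\varphi\in L$; and $F^\oplus\valid\mathsf{(A1)},\mathsf{(A2)},\mathsf{(A3)}$ by the previous lemma, as the third component is exactly $R^+$. Because frame validity is closed under substitution, modus ponens and necessitation, $F^\oplus$ validates the whole normal logic generated by $L\cup\{\mathsf{(A1)},\mathsf{(A2)},\mathsf{(A3)}\}$, i.e.\ $F^\oplus\valid L^\BP$. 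For the converse $\Frames(L^\BP)\se\fL^\oplus$, take a bimodal frame $G=(W,R,S)$ with $G\valid L^\BP$. Then $G$ validates $\mathsf{(A1)},\mathsf{(A2)},\mathsf{(A3)}$, so $S=R^+$ by the previous lemma, whence $G=F^\oplus$ for $F=(W,R)$. Moreover $G\valid L$ (as $L\se L^\BP$), and since these are $\B$-formulas this yields $F\valid L$, so $F\in\fL$ and $G\in\fL^\oplus$.

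For (a) I would use the standard reduction: for any model $N$ we have $N\valid L^\BP$ iff $N$ validates every substitution instance of each formula of $L$ and of $\mathsf{(A1)},\mathsf{(A2)},\mathsf{(A3)}$, closure under modus ponens and necessitation being automatic in a model. Applied to $N=M^\oplus$ with $M=(W,R,V)\valid L$: the instances of $\mathsf{(A1)},\mathsf{(A2)},\mathsf{(A3)}$ hold because the underlying frame $(W,R,R^+)$ validates these axioms and frame validity survives any substitution; and an instance of some $\varphi\in L$ not mentioning $\BP$ holds because the $\B$-reduct of $M^\oplus$ is exactly $M$ and $L$ is substitution-closed.

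The main obstacle is the remaining case: a substitution instance $\sigma\varphi$ of $\varphi\in L$ in which $\sigma$ replaces some variable by a formula containing $\BP$. Writing $V^{\ast}(p):=\{w\mid M^\oplus,w\valid\sigma(p)\}$, validity of $\sigma\varphi$ in $M^\oplus$ is equivalent to $(W,R,V^{\ast})\valid\varphi$, so I must show that the frame $(W,R)$ under the $\BP$-definable valuation $V^{\ast}$ still validates $\varphi$ --- and here only model validity of $L$ in $M$, not frame validity, is available. I would handle this by observing that adjoining the transitive closure adds no separating power over $\B$-indistinguishable worlds: any bisimulation for $R$ is automatically a bisimulation for $R^+$, so $V^{\ast}$ respects the identifications that $M\valid L$ already controls, and the truth of the fixed formula $\varphi$ under $V^{\ast}$ reduces locally to its truth under a valuation already definable in $M$, where $M\valid L$ applies. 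Reconciling the $\BP$-definable valuation with the weaker, model-level validity of $L$ is where the real work lies; the remaining steps are routine bookkeeping.
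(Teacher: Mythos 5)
Your argument for part (b) is correct and is essentially the paper's: since frame validity is closed under substitution, modus ponens and necessitation, $F^\oplus\valid L^\BP$ reduces to $F^\oplus\valid L$ together with $F^\oplus\valid(A1),(A2),(A3)$, and the converse inclusion follows from the preceding lemma ($G\valid\{(A1),(A2),(A3)\}$ iff $S=R^+$) plus the fact that $L\se\Fm(\B)$.

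For part (a) you have correctly isolated the only non-trivial point, but the repair you sketch does not close it, so this part remains a gap. The issue is to show $M^\oplus\models\vf^\sigma$ for $\vf\in L$ when $\sigma$ substitutes $\BP$-formulas, equivalently that $(W,R,V^\sigma)\models\vf$ where $V^\sigma(p)=\{w\mid M^\oplus\!,w\models\sigma(p)\}$. The hypothesis $M\models L$, with $L$ substitution-closed over $\Fm(\B)$, controls precisely the valuations into sets that are $\B$-definable in $M$ (this is Lemma~\ref{Lem:Msigma:L}). But $\{w\mid M^\oplus\!,w\models\BP\psi\}=\bigcap_{n\ge1}\{w\mid M^\oplus\!,w\models\B^n\psi\}$ is an infinite intersection of such sets and, for infinite $M$, need not itself be $\B$-definable in $M$. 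Your bisimulation observation is true ($R$-bisimulations are automatically $R^+$-bisimulations, so $V^\sigma(p)$ is a union of bisimilarity classes), but it does not help: every modally definable set is bisimulation-closed, while $M\models L$ says nothing about valuations into arbitrary bisimulation-closed sets; and the truth of $\BP\psi$ at a point depends on its entire, possibly infinite, $R^+$-future, so it does not ``reduce locally'' to the truth of any single $\B$-formula. (When $M$ is finite the difficulty genuinely disappears, since then $R^+=\bigcup_{n\le|W|}R^n$, every $\BP$-definable set is $\B$-definable, and $V^\sigma$ becomes a definable variant of $V$.) For comparison, the paper's own proof of this lemma consists of the single observation that any frame $(W,R,R^+)$ validates $(A1)$--$(A3)$; it verifies the new axioms on the frame and leaves the treatment of $\BP$-substitution instances of $L$-theorems implicit, so the question you raise is a legitimate one --- but the answer you propose to it is not a proof.
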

\begin{proof}
Any frame of the form $(W,R,R^+)$ validates $(A1),(A2),(A3)$.
\end{proof}

\begin{lemma}[Conservativity]\label{Lem:Conserv:BP}
For any consistent normal logic $L$, the logic $L^\BP$ is a conservative extension of~$L$:
if $A\in\Fm(\B)$ and ${L^\BP\proves A}$, then ${L\proves A}$.
\end{lemma}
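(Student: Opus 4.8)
The plan is to argue by contraposition, reducing provability to validity in the class of models and then transferring a countermodel from $L$ to $L^\BP$ via the expansion $M\mapsto M^\oplus$. Suppose $A\in\Fm(\B)$ and $L\nproves A$, \ie $A\notin L$; I want to conclude $A\notin L^\BP$, which is the contrapositive of the claimed implication.

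First I would invoke the fact, noted just after Lemma~\ref{Lem:AF:Mo:FMP}, that every normal logic is complete with respect to the class of its own models. Applied to $L$, this yields a model $M=(W,R,V)\in\mL$ with $M\nmodels A$. Next, form the expansion $M^\oplus=(W,R,R^+,V)$. By Lemma~\ref{Lem:Models:LP}(a) we have $\mL^\oplus\se\Models(L^\BP)$, so $M^\oplus\valid L^\BP$. The crucial observation is that passing from $M$ to $M^\oplus$ leaves the truth value of every $\BP$-free formula unchanged: since $A$ contains no occurrence of $\BP$, its evaluation depends only on $W$, $R$, and $V$, all of which are shared by the two models. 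A routine induction on the structure of formulas in $\Fm(\B)$ confirms that ${M^\oplus,x\valid B}\Iff{M,x\valid B}$ for every $B\in\Fm(\B)$ and every ${x\in W}$, and in particular $M^\oplus\nmodels A$.

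Combining these observations, $M^\oplus$ is a model of $L^\BP$ that refutes $A$; hence $A\notin L^\BP$, \ie $L^\BP\nproves A$. This establishes the contrapositive and therefore the lemma. (Note that the hypothesis that $L$ is consistent is needed only to rule out the degenerate case, and enters implicitly through the existence of models; the argument otherwise uses only completeness of $L$ with respect to $\mL$.)

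I do not expect a genuine obstacle here: essentially all of the substantive content is packaged in Lemma~\ref{Lem:Models:LP}(a), which guarantees that the $R^+$-expansion of any $L$-model validates the Segerberg axioms $\mathsf{(A1)}$--$\mathsf{(A3)}$ together with $L$ itself. The only point requiring care is the language-restriction step showing that $\BP$-free formulas are insensitive to the added relation $R^+$; this is immediate but should be stated explicitly, since it is precisely what licenses transferring the countermodel between the two signatures.
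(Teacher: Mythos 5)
Your proof is correct and follows essentially the same route as the paper's: the paper also argues by contraposition, taking the canonical model $M_L$ as the specific countermodel, passing to $M_L^\oplus$, and using Lemma~\ref{Lem:Models:LP}(a) together with the (implicit) fact that $\BP$-free formulas are unaffected by the added relation. Your version merely makes explicit the completeness step and the language-restriction induction that the paper leaves tacit.
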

\begin{proof}
If ${L\nproves A}$, then ${M_L\nmodels A}$ and ${M^\oplus_L\nmodels A}$. But ${M_L^\oplus\models L^\BP}$.
So ${L^\BP\nproves A}$.
\end{proof}

\subsection{Completeness for logics with the transitive closure modality}

In the proof of the main result, we will need to modify a valuation \emph{definably}.
By $A^\sigma$ we denote 
the application of a substitution $\sigma\colon\Var\to\Fm$ to a formula~$A$.

\begin{definition}
By a (\emph{modally}) \emph{definable variant} of a model $M=(F,V)$ we mean
a model of the form $M^\sigma=(F,V^\sigma)$, for some substitution~$\sigma$,
where the valuation $V^\sigma$ is defined by $V^\sigma(p)=V(p^\sigma)$, for every variable~$p$.
\end{definition}

In other words, ${M^\sigma\!,x\models p}$ iff ${M,x\models p^\sigma}$.
By induction one can easily prove:

\begin{lemma}
\label{Lem:Def:variant}
${M^\sigma\!,x\models A}$ \ iff \ ${M,x\models A^\sigma}$,
\ for all formulas~$A$.
\end{lemma}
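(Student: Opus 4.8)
The plan is to prove the statement
$$
M^\sigma,x\models A \quad\Iff\quad M,x\models A^\sigma
$$
by straightforward induction on the structure of the formula~$A$. The base cases and the boolean cases are immediate, and the only mildly delicate point is the modal induction step, where I must unwind how substitution commutes with the box modality. Since substitution does not touch the underlying frame $F$ (only the valuation is altered in $M^\sigma$), the relations $R_e$ are identical in $M$ and $M^\sigma$, and this is exactly what makes the inductive passage through $\BX{e}$ go through.

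First I would fix the model $M=(F,V)$ and the substitution $\sigma$, so that $M^\sigma=(F,V^\sigma)$ with $V^\sigma(p)=V(p^\sigma)$, and proceed by induction on~$A$. For the base case $A=\bot$ both sides are false, so the equivalence holds trivially. For $A=p$ a propositional variable, the claim ${M^\sigma,x\models p \Iff M,x\models p^\sigma}$ is precisely the defining property of the definable variant (it is the displayed reformulation ${M^\sigma,x\models p}$ iff ${M,x\models p^\sigma}$ noted just before the lemma). The boolean step for $A=\Fi\to\psi$ is immediate from the induction hypotheses applied to $\Fi$ and $\psi$, together with the observation that $(\Fi\to\psi)^\sigma=\Fi^\sigma\to\psi^\sigma$.

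The modal step is where I would concentrate attention. Take $A=\BX{e}\Fi$ and assume the induction hypothesis for~$\Fi$. Using the definition of truth for $\BX{e}$ in $M^\sigma$, then that $M^\sigma$ and $M$ share the same relation $R_e$ (so the set of $R_e$-successors of $x$ is the same in both models), then the induction hypothesis at each successor~$y$, and finally the syntactic identity $(\BX{e}\Fi)^\sigma=\BX{e}(\Fi^\sigma)$, I obtain the chain
$$
M^\sigma,x\models\BX{e}\Fi
\ \Iff\ \forall y\,(x\,R_e\,y\To M^\sigma,y\models\Fi)
\ \Iff\ \forall y\,(x\,R_e\,y\To M,y\models\Fi^\sigma)
\ \Iff\ M,x\models\BX{e}(\Fi^\sigma)
\ \Iff\ M,x\models(\BX{e}\Fi)^\sigma.
$$
This closes the induction and establishes the lemma for all formulas~$A$. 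The main (and really the only) obstacle is bookkeeping: being careful that $\sigma$ acts on the formula but not on the frame, so that the quantification over successors is literally identical on both sides and the induction hypothesis can be invoked pointwise at each successor. Everything else is routine and requires no further computation.
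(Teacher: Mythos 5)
Your proof is correct and is exactly the argument the paper intends: the paper dispatches this lemma with the single remark ``By induction one can easily prove,'' and your induction on the structure of $A$ (with the key observation that $\sigma$ changes only the valuation, not the frame, so the modal step goes through pointwise at successors) is precisely that routine induction spelled out.
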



Since a logic is closed under substitutions, we obtain the following fact.

\begin{lemma}\label{Lem:Msigma:L}
If $L$ is a logic and ${M\models L}$,
then ${M^\sigma\models L}$, for any substitution~$\sigma$.
\end{lemma}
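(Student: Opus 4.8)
The plan is to unwind the definitions and let the two already-established facts do all the work: closure of a normal logic under substitution, and the defining property of a definable variant recorded in Lemma~\ref{Lem:Def:variant}. Recall that to show ${M^\sigma\models L}$ it suffices to check, for each individual axiom/theorem ${A\in L}$, that ${M^\sigma\models A}$, \ie, that ${M^\sigma\!,x\models A}$ holds at every point~$x$ of the underlying frame.

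First I would fix an arbitrary formula ${A\in L}$ and an arbitrary point~$x$. The bridge between the variant model and the original one is Lemma~\ref{Lem:Def:variant}, which gives ${M^\sigma\!,x\models A}$ iff ${M,x\models A^\sigma}$. So the task reduces to establishing ${M,x\models A^\sigma}$.

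Here the key observation enters: since $L$ is a normal modal logic it is closed under substitution, so from ${A\in L}$ we get ${A^\sigma\in L}$. Combining this with the hypothesis ${M\models L}$ yields ${M\models A^\sigma}$, hence ${M,x\models A^\sigma}$ at the chosen point. Reading the equivalence of Lemma~\ref{Lem:Def:variant} backwards then gives ${M^\sigma\!,x\models A}$. As $x$ was arbitrary we conclude ${M^\sigma\models A}$, and as ${A\in L}$ was arbitrary we conclude ${M^\sigma\models L}$.

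There is no real obstacle here; the statement is essentially a bookkeeping corollary of Lemma~\ref{Lem:Def:variant}. The only point worth flagging is that the argument relies squarely on closure under substitution, which is precisely the property distinguishing a \emph{logic} from an arbitrary set of formulas, so the hypothesis that $L$ is a logic (rather than merely $M\models\{A\}$ for a single $A$) cannot be dropped.
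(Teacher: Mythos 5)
Your proof is correct and follows exactly the route the paper intends: the paper states the lemma immediately after noting that a logic is closed under substitution, and the argument is the same combination of that closure property with Lemma~\ref{Lem:Def:variant}. Nothing is missing.
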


Recall that the axioms (A1) and (A2) are canonical. In particular,
they are valid in the canonical frame of the logic $L^\BP$.
For (A3), this is not the case. However,
in order to obtain our completeness result, we do not necessarily need its
validity in the canonical frame. Instead,
we only need that
after taking a definable filtration of a model ${M\models L^\BP}$
(in particular, of~$M_L$, or any other model in which all substitution instances
of (A3) hold)
into a finite model $\fM$, the resulting frame $\fF$ 
satisfies the inclusion ${\fS\se(\fR)^+}$.
The following key lemma states that this
is indeed the case for the \emph{minimal} filtration.

Let us write ${M\models A^*}$ if we have ${M\models A^\sigma}$ for all substitutions~$\sigma$.

\begin{lemma}[Induction axiom and minimal filtration]\label{Lem:A3:min:filtration} \ \\
%
Let $M=(W,R,S,V)\models(A3)^*$ and let $\Phi\se\Fm$ be finite.
Then $S^{\mathsf{min}}_{\sim_\Phi}\se(R^{\mathsf{min}}_{\sim_\Phi})^+\!$.
\end{lemma}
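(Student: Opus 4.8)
The plan is to drive the containment with a single instance of the induction axiom $\mathsf{(A3)}$, applied to a carefully chosen \emph{definable} predicate. Given two $\sim_\Phi$-classes related by $S^{\mathsf{min}}_{\sim_\Phi}$, the definition of the minimal filtered relation (Definition~\ref{Def:Filtration}) supplies representatives $a,b\in W$ with $a\,S\,b$; writing $\ff{w}:=[w]_{\sim_\Phi}$, it then suffices to establish $\ff{a}\,(R^{\mathsf{min}}_{\sim_\Phi})^+\,\ff{b}$. The predicate I would use is reachability from $\ff{a}$ in the minimal frame: let $D:=\{\ff{z}\mid \ff{a}\,(R^{\mathsf{min}}_{\sim_\Phi})^+\,\ff{z}\}$. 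Two closure facts about $D$ are immediate and will carry the argument: every $R^{\mathsf{min}}_{\sim_\Phi}$-successor of $\ff{a}$ lies in $D$, and $D$ is closed under $R^{\mathsf{min}}_{\sim_\Phi}$-successors.

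Next I would turn $D$ into a formula. Since $\sim_\Phi$ is of finite index, each class $\ff{z}$ is defined by its characteristic $\Phi$-formula $\chi_{\ff{z}}$ (the conjunction over $\Phi$ of the formulas true at $z$ together with the negations of those false at $z$), so the finite disjunction $\delta:=\bigvee_{\ff{z}\in D}\chi_{\ff{z}}$ is a formula satisfying $M,w\models\delta$ iff $\ff{w}\in D$. Now take a substitution $\sigma$ with $\sigma(p)=\delta$ and apply the instance $\mathsf{(A3)}^\sigma$ at the point $a$, which is legitimate because $M\models\mathsf{(A3)}^*$. I would verify the two antecedents directly from the closure facts: first, $M,a\models\B\delta$, since $a\,R\,w$ forces $\ff{a}\,R^{\mathsf{min}}_{\sim_\Phi}\,\ff{w}$ (witnessed by $a,w$) and hence $\ff{w}\in D$; second, and more tellingly, $\delta\to\B\delta$ holds \emph{globally} in $M$, because whenever $\ff{w}\in D$ and $w\,R\,v$ we get $\ff{w}\,R^{\mathsf{min}}_{\sim_\Phi}\,\ff{v}$ and so $\ff{v}\in D$ by closure, while at points where $\delta$ fails the implication is vacuous. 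In particular $M,a\models\BP(\delta\to\B\delta)$, and the conclusion of $\mathsf{(A3)}^\sigma$ yields $M,a\models\BP\delta$. Since $a\,S\,b$, this gives $M,b\models\delta$, i.e.\ $\ff{b}\in D$, which is exactly $\ff{a}\,(R^{\mathsf{min}}_{\sim_\Phi})^+\,\ff{b}$.

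The main obstacle here is conceptual rather than computational: one has to recognise that the correct substitution instance is the one whose variable denotes the $(R^{\mathsf{min}}_{\sim_\Phi})^+$-reachability set from $\ff{a}$, and that this set is simultaneously \emph{definable} (thanks to the finite index of $\sim_\Phi$ and the definability of its classes) and \emph{forward-closed} under $R^{\mathsf{min}}_{\sim_\Phi}$. Once $\delta$ is chosen this way, the two antecedents of the induction axiom hold essentially by construction, and the axiom supplies the rest. I would also flag explicitly that only the minimal filtered relation $R^{\mathsf{min}}_{\sim_\Phi}$ enters the argument (an arbitrary $\ff{R}$ with $R^{\mathsf{min}}_{\sim_\Phi}\se\ff{R}$ would still work on the $\B$-side but would break the exact matching between $a\,R\,w$ and one $R^{\mathsf{min}}_{\sim_\Phi}$-step), which is precisely why the statement is phrased for the minimal filtration.
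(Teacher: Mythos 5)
Your argument is correct and is essentially the paper's own proof: the reachability set $D$ is exactly the set $Y:=r^+(\fx)$ used there, the formula $\delta$ plays the role of the paper's $\Fi$, and the verification of the two antecedents of $\mathsf{(A3)}$ and the final step via $a\,S\,b$ proceed identically. No gaps.
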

\begin{proof}
Denote $r:=R^{\mathsf{min}}_{\sim_\Phi}$ and
$s:=S^{\mathsf{min}}_{\sim_\Phi}$.
To prove ${s\se r^+}$, assume $\fx\,s\,\fy$.
By definition of $R^{\mathsf{min}}_{\sim_\Phi}$, without loss of generality, $x\,S\,y$.
Consider ${Y:=r^+(\fx)\se\smash{\fW}}$. We need to show that ${\fy\in Y}$.

Since $\Phi$ is finite, every $\sim_\Phi$-equivalence class ${\fz\se W}$
is a definable (by some formula) subset of~$W$.
Since $Y$ is a finite collection of such subsets,
their union ${{\textstyle\bigcup}Y\se W}$ is also a definable subset of~$W$.
So, there is a formula $\Fi$
such that, for all ${z\in W}$, we have:
 ${M,z\models\Fi}$ iff ${z\in{\textstyle\bigcup}Y}$ iff ${\fz\in Y}$.

Firstly, $M\models\Fi\to\B\Fi$.
Indeed, if ${M,a\models\Fi}$, $a\,R\,b$, then ${\fa\in Y}$, $\fa\,r\,\fb$.
But $Y$ is closed under~$r$, hence
${\fb\in Y}$ and ${M,b\models\Fi}$.
Therefore, $M\models\BP(\Fi\to\B\Fi)$.

Secondly, $M,x\models\B\Fi$. Indeed, if $x\,R\,z$ then $\fx\,r\,\fz$, so ${\fz\in Y}$ and ${M,z\models\Fi}$.

Now we use that $M\models\BP(\Fi\to\B\Fi)\to(\B\Fi\to\BP\Fi)$.
Thus, ${M,x\models\BP\Fi}$. Now recall that $x\,S\,y$.
Then ${M,y\models\Fi}$, hence ${\fy\in Y}$.
\end{proof}

In Appendix (Lemma~\ref{Lem:A3:min:frame}) we strengthen the above lemma.

Now we come to the main technical tool of our paper.

\begin{theorem}[Transfer of ADF to logics with transitive closure]\label{Th:ADL:LBP}\ \\
If the class\/ $\mL$ admits definable filtration, then so does the class\/~$\Models(L^\BP)$.
\end{theorem}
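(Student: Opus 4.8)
The plan is to obtain the required filtration of the bimodal model in one application of definable filtration for the unimodal class $\mL$, shaped so that the result has the form $N^\oplus$ with $N\in\mL$; then membership in $\Models(L^\BP)$ comes for free from Lemma~\ref{Lem:Models:LP}(a). Fix $M=(W,R,S,V)\models L^\BP$ and a finite $\Sub$-closed $\Gamma\se\Fm(\B,\BP)$. First I would pass to the enlargement $\Gamma^+:=\Sub\bigl(\Gamma\cup\{\B A,\B\BP A\mid\BP A\in\Gamma\}\bigr)$: any $\Gamma^+$-filtration is in particular a $\Gamma$-filtration (from $\Gamma\se\Gamma^+$ one gets $\MaxR{\Gamma^+}\se\MaxR{\Gamma}$ and $S^{\mathsf{max}}_{\Gamma^+}\se S^{\mathsf{max}}_{\Gamma}$), so it suffices to filtrate through $\Gamma^+$, and the extra formulas $\B A,\B\BP A$ will be exactly what drives the key inclusion below.

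To make the $\BP$-subformulas visible to the unimodal machinery I encode them. For each subformula $\BP A\in\Gamma^+$ introduce a fresh variable $q_{\BP A}$, let $\rho$ replace every maximal $\BP$-subformula $\BP A$ by $q_{\BP A}$, and set $M^-=(W,R,V^-)$ with $V^-(q_{\BP A})=\{z\mid M,z\models\BP A\}$ and $V^-=V$ elsewhere. Two straightforward inductions give (a) $M,z\models C\iff M^-,z\models C^\rho$ for every bimodal $C$ whose $\BP$-subformulas lie in $\Gamma^+$, and (b) $M^-\models L$ --- for (b) one checks $M^-,z\models C\iff M,z\models C^\theta$ with $\theta\colon q_{\BP A}\mapsto\BP A$, and $C^\theta\in L^\BP$ by substitution-closure, so $M\models L^\BP$ forces $M^-,z\models C$. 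Thus $M^-\in\mL$, and I apply definable filtration for $\mL$ to $M^-$ and the finite $\Sub$-closed $\B$-set $\Gamma^-:=\Sub(\{C^\rho\mid C\in\Gamma^+\})$, obtaining $N=(\ff W,\ff R,\ff V)\in\mL$, a definable $\Gamma^-$-filtration through some finite $\Phi^-$, with $\MinR{\sim}\se\ff R\se\MaxR{\Gamma^-}$ computed in $M^-$, where $\sim\,:=\,\sim_{\Phi^-}$. By (a), $\sim$ equals $\sim_{\Phi}$ computed in $M$ for $\Phi:=\{D^\theta\mid D\in\Phi^-\}$, so the filtration is definable; and since $C^\rho\in\Gamma^-$ whenever $C\in\Gamma^+$, the relation $\sim$ respects $\Gamma^+$ (hence $\Gamma$) and $\ff V$ is canonical on the original variables of $\Gamma$. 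Finally, since $R$ is unchanged from $M$ to $M^-$ and $(\B C)^\rho=\B(C^\rho)\in\Gamma^-$ for $\B C\in\Gamma^+$, the bounds transfer by (a) to $\MinR{\sim}\se\ff R\se\MaxR{\Gamma^+}\se\MaxR{\Gamma}$ in $M$.

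Finally set $\ff S:=(\ff R)^+$ and check the two $\BP$-inclusions, which is where the real work lies. Since $M\models(A3)^*$ (all instances of (A3) lie in $L^\BP$), Lemma~\ref{Lem:A3:min:filtration} yields $S^{\mathsf{min}}_{\sim}\se(\MinR{\sim})^+\se(\ff R)^+=\ff S$. The opposite bound $(\ff R)^+\se S^{\mathsf{max}}_{\Gamma}$ is the main obstacle, and it is here that $\Gamma^+$ is needed: given an $\ff R$-path $\ff x=\ff{x_0}\,\ff R\,\cdots\,\ff R\,\ff{x_n}=\ff y$ and $\BP A\in\Gamma$ with $M,x\models\BP A$, I would propagate along the path by induction, using $\ff R\se\MaxR{\Gamma^+}$ with $\B\BP A\in\Gamma^+$ and (A2) to carry $\BP A$ from $x_i$ to $x_{i+1}$, and with $\B A\in\Gamma^+$ and (A1) to obtain $A$ at each successor, ending at $M,y\models A$. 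With both $\BP$-inclusions in place, $\ff M=(\ff W,\ff R,(\ff R)^+,\ff V)$ is a definable $\Gamma$-filtration of $M$; and as $\ff M=N^\oplus$ with $N\in\mL$, Lemma~\ref{Lem:Models:LP}(a) gives $\ff M\models L^\BP$, so $\Models(L^\BP)$ admits definable filtration.
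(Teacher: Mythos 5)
Your proposal is correct and follows essentially the same route as the paper's proof: encode the $\BP$-subformulas by fresh variables, definably filtrate the unimodal reduct (a model of $L$), take $\ff S=(\ff R)^+$, use Lemma~\ref{Lem:A3:min:filtration} for the minimal-relation inclusion, and use (A1)/(A2) through the maximal relation on the enlarged set for $(\ff R)^+\se S^{\mathsf{max}}_{\Gamma}$. Your path-induction for the latter is just the unfolded form of the paper's decomposition $r\se s$ and $r\circ s\se s$, and substituting only the maximal $\BP$-subformulas rather than all of $\Gamma$ is a cosmetic variation of the same device.
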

\begin{proof}
{\sc Idea:}%
\footnote{The proof of the main theorem differs from the
proof of the corresponding Theorem~2.6 from our paper~\cite{KSZ:AiML:2014} in the following two aspects.
First, in~\cite{KSZ:AiML:2014} we filtrate a model of the form $M=(W,R,R^+,V)$
such that $(W,R,R^+)\valid L^\BP$, \ie, ${(W,R)\valid L}$;
 while here we will filtrate a model of the form $M=(W,R,S,V)$ such that ${M\models L^\BP}$.
As a consequence, in the old proof, we had to show that
${(R^+)^{\mathsf{min}}_\sim\se(R^{\mathsf{min}}_\sim})^+$,
which is quite simple, while here we need to show that
${\fS\se(\fR)^+}$, for this we need Lemma~\ref{Lem:A3:min:filtration}.
Secondly, we transform a filtration of $(W,R,V)$ through a set of formulas ${\Phi\se\Fm(\B)}$ into a
filtration of $(W,R,S,V)$ through some set of formulas $\Phi'\se\Fm(\B,\BP)$,
so we need to build $\Phi'$ from~$\Phi$.}
in order to filtrate a model $M=(W,R,S,V)\models L^\BP$
for $\Gamma\se\Fm(\B,\BP)$, we build a special set of formulas $\Delta\se\Fm(\B)$
and $\Delta$-filtrate the reduct $N=(W,R,V)\models L$ of~$M$ into a finite model
$\ff{N}=(\ff{W},\ff{R},\ff{V})\models L$. Then we 
show that 
$\ff{N}^+=(\ff{W},\ff{R},(\ff{R})^+,\ff{V})\models L^\BP$
is a $\Gamma$-filtration of~$M$.
A~subtlety is that we first take a modified valuation $V^\sigma$ and actually filtrate $N^\sigma$, not~$N$.

\smallskip
{\sc Formally:}
take a model $M=(W,R,S,V)$ such that ${M\models L^\BP}$
and a finite $\Sub$-closed set of formulas $\Gamma\se\Fm(\B,\BP)$.
For each formula ${\Fi\in\Gamma}$, fix a fresh (not occurring in~$\Gamma$) variable~$q_\Fi$.
Consider a substitution $\sigma\colon\Var\to\Fm(\B,\BP)$ defined by
${\sigma(q_\Fi)=\Fi}$ for all ${\Fi\in\Gamma}$ and ${\sigma(p)=p}$
for all other variables~$p$. In the definable variant $M^\sigma=(W,R,S,V^\sigma)$ of~$M$
we have: $M^\sigma\models{q_\Fi\lra\Fi}$ for all ${\Fi\in\Gamma}$ (since ${\Fi^\sigma=\Fi}$),
hence $M^\sigma\models{\B q_\Fi\lra\B\Fi}$ and even
$M^\sigma\models{A\lra A^\sigma}$, for any formula ${A\in\Fm(\B)}$.
We also have ${M^\sigma\models L^\BP}$ by Lemma~\ref{Lem:Msigma:L}.

Now consider the reduct $N^\sigma=(W,R,V^\sigma)$ of~$M^\sigma$.
Clearly, ${N^\sigma\models L}$.
Consider the following finite $\Sub$-closed set of $\B$-formulas:
\begin{center}$
\Delta \ := \ \{\,q_\Fi,\,\B q_\Fi\ \mid\ \Fi\in\Gamma\,\}\ \ \subset \ \ \Fm(\B).
$\end{center}
$\mL$ admits definable filtration, so
there is a $\Delta$-filtration $\ff{N^\sigma}=(\ff{W},\ff{R},\ff{V^\sigma})$
of $N^\sigma$
through some finite set $\Phi\se\Fm(\B)$ with ${\Delta\se\Phi}$
such that ${\ff{N^\sigma}\models L}$.
Let us change $\ff{V^\sigma}$ on the variables $p\in\Var(\Gamma)$
by putting:%
\footnote{In the model $\ff{N^\sigma}$, we will talk about the truth of formulas from~$\Delta$ only,
so the valuation $\ff{V^\sigma}$ of variables from $\Var(\Gamma)$ is irrelevant for this.
}
 ${\fx\models p}$ $\defeq$ ${\fx\models q_p}$.

{\sf Remark.} Since we will have several models on the same set of points,
we need a more subtle notation. In particular, we have $\ff{W}=W/{\sim}^{N^\sigma}_\Phi$,
this notation shows explicitly in which models we consider the $\sim_\Phi$-equivalence of points.

It remains to prove the following statement.

\medskip\noindent	
{\sf Claim.} {\it The model $\fM:=(\fW,\fR,(\fR)^+,\ff{V^\sigma})$
is a $\Gamma$-filtration (through $\Phi^\sigma\!$) of~$M$.}

\begin{description}
\item[{\sf(1)}] We show that $\fW=W/{\sim}^{M}_{\Phi^\sigma}$.
For any ${x\in W}$ and $A\in\Fm(\B)$, we have:
\begin{center}
$N^\sigma\!,x\models A$ \ \ $\Iff$ \ \ $M^\sigma\!,x\models A$ \ \ $\Iff$ \ \ $M,x\models A^\sigma$.
\end{center}

Therefore, for all $x,y\in W$, 
we have:
$(x\,\sim^{N^\sigma}_\Phi y)$ \ \ iff \ \ $(x\,\sim^{M}_{\Phi^\sigma}y)$.

This allows us to introduce a simpler notation $\sim$ for $\sim^{N^\sigma}_\Phi$ and $\sim^{M}_{\Phi^\sigma}$.

Since $\ff{N^\sigma}$ is a $\Delta$-filtration of~$N^\sigma\!$, we have:
$R^{\mathsf{min}}_\sim\ \se\ \fR\ \se\ R^{\mathsf{max}}_{\sim,\Delta}$. \hfill $(*)$

\item[{\sf(2)}]
The relation $\sim$ respects~$\Gamma$.
Indeed, $\Phi\su\Gamma'\su\{q_\Fi\mid \Fi\in\Gamma\}$, 
hence ${\Phi^\sigma\su\Gamma}$. 

\item[\sf(3)] Let us show that $M,x\models p$ $\Iff$ $\fM\!,\fx\models p$, \ for all $x\in W$ and $p\in\Var(\Gamma)$.
\begin{center}$
\begin{array}{lclcr}
       M,x\models p &
\Iff & M^\sigma\!,x\models q_p
& \Iff & N^\sigma\!,x\models q_p\\
&&&& \Updownarrow\qquad\\
\smash{\fM},\fx\models p
&\Iff&
\smash{\fM},\fx\models q_p
&\Iff&
\smash{\ff{N^\sigma},\fx}\models q_p
\end{array}$
\end{center}

\item[{\sf(4)}] $R^{\mathsf{min}}_\sim\ \se\ \fR$. This holds by~$(*)$.

\item[{\sf(5)}] $S^{\mathsf{min}}_\sim\ \se\ \fS$, where ${\fS:=(\fR)^+}$.
Using (4) and Lemma~\ref{Lem:A3:min:filtration}, we obtain:
\begin{center}
$S^{\mathsf{min}}_\sim$ \ $\se$ \ $(R^{\mathsf{min}}_\sim)^+$ \ $\se$ \ $(\fR)^+$ \ $=$ \ $\fS$.
\end{center}

\item[{\sf(6)}] $\fR\se R^{\mathsf{max}}_{\sim,\Gamma}$. Due to~$(*)$, it suffices to prove that 
$R^{\mathsf{max}}_{\sim,\Delta}\se R^{\mathsf{max}}_{\sim,\Gamma}$.

Assume that $\fx\,(R^{\mathsf{max}}_{\sim,\Delta})\,\fy$. To show that
$\fx\,(R^{\mathsf{max}}_{\sim,\Gamma})\,\fy$, take any ${\B\Fi\in\Gamma}$. Then: 
\begin{center}
\begin{tabular}{lclcl}
$M,x\models\B\Fi$ &
$\smash{\stackrel{\smash{(a)}}{\IFF}}$ &
$M^\sigma\!,x\models\B q_\Fi$ &
$\smash{\stackrel{\smash{(b)}}{\IFF}}$ &
$N^\sigma\!,x\models\B q_\Fi$\\
& & & & $\qquad\Downarrow{\scriptstyle(c)}$
\\
$M,y\models \Fi$ &
$\smash{\stackrel{\smash{(a)}}{\IFF}}$ &
$M^\sigma\!,y\models q_\Fi$ &
$\smash{\stackrel{\smash{(b)}}{\IFF}}$ &
$N^\sigma\!,y\models q_\Fi$
\end{tabular}
\end{center}
We used:
$(a)$ Lemma~\ref{Lem:Def:variant};
$(b)$ $q_\Fi,\B q_\Fi\in\Fm(\B)$;
$(c)$ $\B q_\Fi\in\Delta$ and $\fx\,(R^{\mathsf{max}}_{\sim,\Delta})\,\fy$.

\item[{\sf(7)}] $\fS\se S^{\mathsf{max}}_{\sim,\Gamma}$.\,
Due to~$(*)$, it suffices to prove that $(R^{\mathsf{max}}_{\sim,\Delta})^+\se S^{\mathsf{max}}_{\sim,\Gamma}$.

Let us denote $r:=R^{\mathsf{max}}_{\sim,\Delta}$ and $s:=S^{\mathsf{max}}_{\sim,\Gamma}$.
In order to prove that ${r^+\se s}$, it suffices to prove two inclusions:
${r\se s}$ and ${r\circ s\se s}$.

\medskip\noindent{\sf(7a)} \
Proof of ${r\se s}$. We will use the axiom ${\BP p\to\B p}$.

Assume $\fx\,R^{\mathsf{max}}_{\sim,\Delta}\,\fy$.
To prove that $\fx\,S^{\mathsf{max}}_{\sim,\Gamma}\,\fy$,
take any ${\BP\Fi\in\Gamma}$. 
Then:
\begin{center}
\begin{tabular}{l@{\ \ }c@{\ \ }l@{\ \ }c@{\ \ }l@{\ \ }c@{\ \ }l}
$M,x\models\BP\Fi$ &
$\smash{\stackrel{\smash{(d)}}{\TO}}$ & 
$M,x\models\B\Fi$ &
$\smash{\stackrel{\smash{(a)}}{\IFF}}$ &
$M^\sigma\!,x\models\B q_\Fi$ &
$\smash{\stackrel{\smash{(b)}}{\IFF}}$ &
$N^\sigma\!,x\models\B q_\Fi$\\
& & & & & & $\qquad\Downarrow{\scriptstyle(c)}$
\\
& & $M,y\models \Fi$ &
$\smash{\stackrel{\smash{(a)}}{\IFF}}$ &
$M^\sigma\!,y\models q_\Fi$ &
$\smash{\stackrel{\smash{(b)}}{\IFF}}$ &
$N^\sigma\!,y\models q_\Fi$
\end{tabular}
\end{center}
$(d)$ holds since $M\models{\BP\Fi\to\B\Fi}$.
The explanations of $(a,b,c)$ are the same.

\medskip\noindent{\sf(7b)} \
Proof of ${r\circ s\se s}$. We will use the axiom ${\BP p\to\B\BP p}$.

Assume $\fx\,R^{\mathsf{max}}_{\sim,\Delta}\,\fy\,S^{\mathsf{max}}_{\sim,\Gamma}\,\fz$.
To prove that $\fx\,S^{\mathsf{max}}_{\sim,\Gamma}\,\fz$,
take any ${\BP\Fi\in\Gamma}$. 
Then:
\begin{center}
\begin{tabular}{l@{\ \ }c@{\ \ }l@{\ \ }c@{\ \ }l@{\ \ }c@{\ \ }l}
$M,x\models\BP\Fi$ &
$\smash{\stackrel{\smash{(e)}}{\TO}}$ & 
$M,x\models\B\BP\Fi$ &
$\smash{\stackrel{\smash{(a)}}{\IFF}}$ &
$M^\sigma\!,x\models\B q_{\BP\Fi}$ &
$\smash{\stackrel{\smash{(b)}}{\IFF}}$ &
$N^\sigma\!,x\models\B q_{\BP\Fi}$\\
& & & & & & $\qquad\Downarrow{\scriptstyle(c)}$
\\
$M,z\models\Fi$ &
$\smash{\stackrel{\smash{(g)}}{\OT}}$ &
$M,y\models \BP\Fi$ &
$\smash{\stackrel{\smash{(a)}}{\IFF}}$ &
$M^\sigma\!,y\models q_{\BP\Fi}$ &
$\smash{\stackrel{\smash{(b)}}{\IFF}}$ &
$N^\sigma\!,y\models q_{\BP\Fi}$
\end{tabular}
\end{center}
We used:
$(e)$~$M\models{\BP\Fi\to\B\BP\Fi}$;
$(a)$~Lemma~\ref{Lem:Def:variant};
$(b)$~${\B q_{\BP\Fi}\in\Fm(\B)}$;
$(c)$~$\B q_{\BP\Fi}\in\Delta$ and $\fx\,(R^{\mathsf{max}}_{\sim,\Delta})\,\fy$;
$(g)$~${\BP\Fi\in\Gamma}$ and $\fy\,S^{\mathsf{max}}_{\sim,\Gamma}\,\fz$.
\end{description}

This completes the proof of theorem.
\end{proof}

Note that in {\sf(7a)} and {\sf(7b)} we proved inclusions that involve maximal relations,
and these inclusions resemble the axioms (A1) and (A2).
This is not a coincidence. In Lemma~4.3 of our paper~\cite{KSZ:AiML:2014},
we already made this observation for any \emph{right-linear grammar} axiom
and both (A1) and (A2) are right-linear.

\smallskip
Let us summarize the main result on logics with transitive closure.
We give two versions. The first uses the (rather unusual)
property that the models of~$L$ admit filtration.
The second uses the filtration of frames of~$L$,
but has an additional requirement of canonicity.

\begin{theorem}[Main result, version 1]\label{Th:Main:result:1}
Assume that the class of models $\mL$ of a logic~$L$ admits definable filtration. Then:
\begin{enumerate}\compact
\item[\rm(1)] the class of models $\Models(L^\BP)$ admits definable filtration;
\item[\rm(2)] hence the logic $L^\BP$ has the finite model property;
\item[\rm(3)] hence the logic $L^\BP$ is Kripke complete.
\end{enumerate}
\end{theorem}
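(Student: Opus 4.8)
The plan is to observe that this theorem is a bookkeeping corollary that assembles the machinery developed above; essentially no new argument is needed, only a careful chaining of three earlier results together with the elementary fact that a definable filtration is in particular a filtration.

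First, for part~(1), I would simply invoke Theorem~\ref{Th:ADL:LBP}. Its hypothesis is precisely that $\mL$ admits definable filtration, and its conclusion is precisely that $\Models(L^\BP)$ admits definable filtration. So~(1) is literally a restatement of that theorem and costs nothing here; all the real content---in particular the passage from the induction axiom to the inclusion $\fS\se(\fR)^+$ via Lemma~\ref{Lem:A3:min:filtration}---has already been paid for.

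Next, for part~(2), I would downgrade the property just obtained: since every definable $\Gamma$-filtration is in particular a $\Gamma$-filtration, part~(1) yields that $\Models(L^\BP)$ admits (plain) filtration. I then apply Lemma~\ref{Lem:AF:Mo:FMP} with the generic logic instantiated to $L^\BP$; this is legitimate because $L^\BP$ is a normal logic, so the class of its models is exactly $\Models(L^\BP)$. The lemma then delivers the FMP for $L^\BP$.

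Finally, for part~(3), Kripke completeness is immediate from~(2): by the definition of the FMP, $L^\BP$ is the logic of some class of finite frames, hence the logic of some class of frames, which is exactly what it means to be Kripke complete. (Indeed Lemma~\ref{Lem:AF:Mo:FMP} already states ``and hence is complete'', so one could also read~(3) straight off that lemma.) The only points requiring care---and there is no genuine obstacle---are the two elementary bookkeeping steps: that ``definable'' weakens correctly to ``plain'' when feeding into Lemma~\ref{Lem:AF:Mo:FMP}, and that instantiating a lemma stated for an arbitrary normal logic at the particular logic $L^\BP$ is sound.
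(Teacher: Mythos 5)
Your proposal is correct and matches the paper's intended argument exactly: the theorem is stated as an immediate corollary, with (1) being Theorem~\ref{Th:ADL:LBP} verbatim, and (2)--(3) following from Lemma~\ref{Lem:AF:Mo:FMP} applied to the normal logic $L^\BP$. Nothing further is needed.
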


The above theorem allows us to `iterate' the ADF property, see Section~\ref{Sect:mainCorollary}.

\begin{theorem}[Main result, version 2]\label{Th:Main:result:2}
Assume that a logic $L$ is canonical and the class of its frames $\fL$
 admits definable filtration. Then:
\begin{enumerate}\compact
\item[\rm(1)] the class $\Models(L^\BP)$ 
admits definable filtration;
\item[\rm(2)] hence the logic $L^\BP$ has the finite model property;
\item[\rm(3)] hence the logic $L^\BP$ is Kripke complete.
\end{enumerate}
\end{theorem}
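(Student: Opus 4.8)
The plan is to deduce this from the already-established Theorem~\ref{Th:Main:result:1} (version~1) by upgrading its hypothesis. Version~1 requires that the class of \emph{models} $\mL$ admits definable filtration, whereas here we are handed the hypothesis that the class of \emph{frames} $\fL$ admits definable filtration, together with the canonicity of~$L$. For canonical logics these two hypotheses in fact coincide, so the first and only substantial step is to bridge this gap; everything afterwards is a mechanical citation.

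First I would invoke Theorem~\ref{Th:ADF:Fr:Mod}: since $L$ is canonical, $\fL$ admits definable filtration iff $\mL$ does. Applying the ($\To$) direction to our hypothesis immediately yields that $\mL$ admits definable filtration, which is precisely the hypothesis of version~1. From this point the three conclusions follow without further work. Part~(1) is Theorem~\ref{Th:ADL:LBP} applied to $\mL$: definable filtration of $\mL$ transfers to $\Models(L^\BP)$. Part~(2) follows from part~(1) by Lemma~\ref{Lem:AF:Mo:FMP} (admitting filtration implies the FMP), applied to the logic $L^\BP$, whose model class is exactly $\Models(L^\BP)$. Part~(3) is the ``hence complete'' clause of the same Lemma~\ref{Lem:AF:Mo:FMP}. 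Equivalently, once $\mL$ is known to admit definable filtration one may simply quote all three items of version~1 verbatim.

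The only genuinely delicate point is the passage through canonicity in the first step, and that work is already carried out in the proof of Theorem~\ref{Th:ADF:Fr:Mod}: to filtrate a given $\fL$-model $M$ one instead filtrates the canonical model $M_T$ of its theory $T=\Th(M)$ (which is an $\fL$-model by canonicity and Lemma on canonical generated submodels) and then transports the filtration back to~$M$ along the bijection of Lemma~\ref{Lem:Filtr:canon:map}. Since that argument is self-contained and uses the $\fL$-ADF hypothesis directly, no new obstacle arises here: version~2 is a clean corollary, and the real subtlety of the passage from frames to models has already been localized entirely inside Theorem~\ref{Th:ADF:Fr:Mod}.

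Thus the expected difficulty of \emph{this} statement is essentially nil; all the substantive content lives in the two transfer theorems it invokes. If I wanted a fully self-contained account I would spell out the chain $\fL\text{-ADF} \To \mL\text{-ADF} \To \Models(L^\BP)\text{-ADF} \To \text{FMP}(L^\BP) \To \text{completeness}(L^\BP)$, flagging that canonicity is used only at the first arrow and the transitive-closure machinery (and Lemma~\ref{Lem:A3:min:filtration}) only at the second.
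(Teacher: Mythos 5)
Your proof is correct and follows exactly the route the paper intends: canonicity plus Theorem~\ref{Th:ADF:Fr:Mod} converts frame-ADF into model-ADF, after which all three conclusions are quoted verbatim from Theorem~\ref{Th:Main:result:1}. This is precisely why the paper proves Theorem~\ref{Th:ADF:Fr:Mod} immediately before the two versions of the main result, so nothing further is needed.
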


\section{
$\PDL$ization of logics that admit filtration
}\label{Sect:pdlization}

\newcommand{\regSymb}{\sharp}
\newcommand{\extReg}[1]{{#1}{\lefteqn{}}^\regSymb}
\newcommand{\extRegn}[2]{{#1}{}^{(#2)}}

\subsection{Main corollary}\label{Sect:mainCorollary}
Now we apply Theorem  \ref{Th:Main:result:1} to show 
that 
if $\mL$ admits definable filtrations, then 
the following $\PDL$-like expansions of $L$ have the
finite model property.

\begin{definition}
For an alphabet $\Sigma$, let $\extReg{\Sigma}=\Sigma\cup \{(e\circ f), (e\cup f), e^+
\mid e, f\in \Sigma\}$,
assuming that the added symbols are not in $\Sigma$.
Put $\extRegn{\Sigma}{0}=\Sigma$,  $\extRegn{\Sigma}{n+1}=\extReg{(\extRegn{\Sigma}{n})}$.

For a frame $F=(W,(R_e)_{e\in\Sigma})$, put 
$\extReg{F}=(W,(R_e)_{e\in\extReg{\Sigma}})$, where 
for $e,c\in \Sigma$, 
\begin{center}
$
R_{e\circ c}=R_e\circ R_c, \quad
R_{e\cup c}=R_e\cup R_c,\quad
R_{e^+}=(R_e)^+.
$\end{center}
Put $\extRegn{F}{0}=F$,  $\extRegn{F}{n+1}=\extReg{(\extRegn{F}{n})}$.

For a model $M=(F,V)$, we put $\extReg{M}=(\extReg{F},V)$ and $\extRegn{M}{n}=(\extRegn{F}{n},V)$.

For a logic $L$ over $\Sigma$, let $\extReg{L}$ be the smallest (normal) logic 
over $\extReg{\Sigma}$ that contains $L$ and 
the following \PDL-like axioms, for all $e,c\in \Sigma$:
\begin{center}$\begin{array}{l}
 \BX{e\cup c}p\lra\BX{e}p\wedge \BX{c}p, \\
 \BX{e\circ c}p\lra\BX{e}\BX{c}p,  \\
 \BX{e^+} p \to \BX{e} p, \quad  
 \BX{e^+}p \to \BX{e}\BX{e^+} p, \quad 
 \BX{e^+}(p\to \BX{e}p)\to (\BX{e}p \to \BX{e^+} p).
\end{array}$\end{center}

We put $\extRegn{L}{0}=L$,  $\extRegn{L}{n+1}=\extReg{(\extRegn{L}{n})}$. 
\end{definition}


The following is a simple analogue of Lemma~\ref{Lem:Models:LP}.

\begin{lemma}\label{Lem:Models:Lstar}
{\bf (a)} ${M\mo L}$ implies ${\extReg{M}\mo \extReg{L}}$.
\quad
{\bf (b)} ${F\mo L}$ \ iff \ ${\extReg{F}\mo \extReg{L}}$.
\end{lemma}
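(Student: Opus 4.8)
The plan is to mirror the proof of Lemma~\ref{Lem:Models:LP}, treating separately the two groups of axioms that generate $\extReg{L}$: the old axioms of $L$ and the new \PDL-like axioms for $\cup$, $\circ$ and $e^+$. Throughout I rely on the key observation that the $\Sigma$-reduct of $\extReg{M}$ is exactly $M$ and the $\Sigma$-reduct of $\extReg{F}$ is exactly $F$. Since every formula of $L$ lies in $\Fm(\Sigma)$, its truth value at any point is computed using only the relations $R_e$ with $e\in\Sigma$; hence $\extReg{M},x\mo A$ iff $M,x\mo A$, and $\extReg{F}\mo A$ iff $F\mo A$, for all $A\in\Fm(\Sigma)$. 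This reduct remark does double duty: it gives $\extReg{M}\mo L$ from $M\mo L$ (needed for (a) and for the forward direction of (b)), and it gives the backward direction of (b) for free, since from $\extReg{F}\mo\extReg{L}$ and $L\se\extReg{L}$ we get $\extReg{F}\mo L$, hence $F\mo L$.

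Next I would verify that each \PDL-like axiom is valid on the frame $\extReg{F}$ (and therefore on $\extReg{M}$). The union and composition axioms follow by directly unwinding the semantics of $\BX{e\cup c}$ and $\BX{e\circ c}$ and using the definitions $R_{e\cup c}=R_e\cup R_c$ and $R_{e\circ c}=R_e\circ R_c$. For the three axioms governing $e^+$, I invoke the earlier characterization (the lemma stating that $(W,R,S)\valid\{(A1),(A2),(A3)\}$ iff $R^+=S$, see Lemma~\ref{Lem:Segerberg:Axioms:Semantics}), applied with $R:=R_e$ and $S:=R_{e^+}=(R_e)^+$. The point worth stressing is that each of these validities is a property of the \emph{frame} $\extReg{F}$, so it persists under every substitution; equivalently, by Lemma~\ref{Lem:Def:variant}, all substitution instances of the \PDL-like axioms are valid on $\extReg{F}$, hence on $\extReg{M}$.

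Finally I would assemble the conclusion. The logic $\extReg{L}$ is the least normal logic containing $L$ and the \PDL-like axioms; validity in a fixed model (respectively frame) is closed under modus ponens and necessitation, and I have just shown that every generating axiom scheme is valid in $\extReg{M}$ (respectively $\extReg{F}$). Consequently every theorem of $\extReg{L}$ is valid there. This yields $\extReg{M}\mo\extReg{L}$, proving (a), and $\extReg{F}\mo\extReg{L}$, which together with the reduct remark of the first paragraph turns (b) into the claimed biconditional.

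The only genuinely non-routine axioms are the three transitive-closure axioms for $e^+$, and these are dispatched exactly by the $\{(A1),(A2),(A3)\}$-characterization used already in Lemma~\ref{Lem:Models:LP}; the union and composition axioms, as well as the treatment of $L$ itself, are immediate from the definitions of $\cup$, $\circ$ and the $\Sigma$-reduct. Thus the main (and essentially the whole) work is the semantic bookkeeping, and no new idea beyond Lemma~\ref{Lem:Models:LP} is required.
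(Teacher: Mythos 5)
Your proof is correct and follows exactly the route the paper intends: the paper gives no explicit proof of this lemma, remarking only that it is a ``simple analogue'' of Lemma~\ref{Lem:Models:LP}, whose one-line proof is precisely the observation that frames of the form $(W,R,R^+)$ validate $(A1)$--$(A3)$. Your fleshed-out version --- the $\Sigma$-reduct remark for the $L$-part and the backward direction of (b), frame-validity of the $\cup$, $\circ$ and $e^+$ axioms via Lemma~\ref{Lem:Segerberg:Axioms:Semantics}, and closure of model-validity under modus ponens and necessitation once all substitution instances of the generators are secured --- is the intended argument.
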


By an easy induction on $n$, we obtain
\begin{proposition}
For a frame $F$ and $n<\omega$,  
$
F\mo L \quad \text{iff} \quad \extRegn{F}{n}\mo \extRegn{L}{n}
$.
\end{proposition}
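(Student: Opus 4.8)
The plan is a straightforward induction on $n$, where each step is discharged by a single application of Lemma~\ref{Lem:Models:Lstar}(b). The whole content of the proposition is that frame-validity is both preserved and reflected under one layer of \PDL-ization, and induction lifts this to all finite $n$.

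For the base case $n=0$ the claim is $F\mo L$ iff $\extRegn{F}{0}\mo\extRegn{L}{0}$, which is immediate since $\extRegn{F}{0}=F$ and $\extRegn{L}{0}=L$ by definition. For the inductive step I would assume the equivalence at level $n$, namely $F\mo L$ iff $\extRegn{F}{n}\mo\extRegn{L}{n}$, and establish it at level $n+1$. Here the decisive observation is that the operation $\extReg{(\cdot)}$ and Lemma~\ref{Lem:Models:Lstar} are formulated generically, for an arbitrary finite alphabet and an arbitrary logic over it; hence they apply verbatim with $\extRegn{\Sigma}{n}$ in the role of the base alphabet, $\extRegn{F}{n}$ in the role of $F$, and $\extRegn{L}{n}$ in the role of $L$. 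Invoking Lemma~\ref{Lem:Models:Lstar}(b) on these objects gives $\extRegn{F}{n}\mo\extRegn{L}{n}$ iff $\extReg{(\extRegn{F}{n})}\mo\extReg{(\extRegn{L}{n})}$, \ie iff $\extRegn{F}{n+1}\mo\extRegn{L}{n+1}$. Composing this equivalence with the induction hypothesis yields $F\mo L$ iff $\extRegn{F}{n+1}\mo\extRegn{L}{n+1}$, which closes the induction.

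The only point I would make explicit --- and the nearest thing to an obstacle --- is that Lemma~\ref{Lem:Models:Lstar}(b) must be read as a statement about \emph{any} logic over \emph{any} finite alphabet, not merely about the original $L$ over $\Sigma$. Once this is noted, everything matches exactly: the fresh relations $R_{e\circ c}$, $R_{e\cup c}$, $R_{e^+}$ of $\extRegn{F}{n+1}$ are precisely the $\extReg{(\cdot)}$-relations built over the index set $\extRegn{\Sigma}{n}$, and the axioms added in passing from $\extRegn{L}{n}$ to $\extRegn{L}{n+1}$ are exactly the \PDL-like axioms prescribed for $\extReg{(\extRegn{L}{n})}$. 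Thus no verification beyond one use of the lemma per level is required, and there is no genuine difficulty hidden in the induction itself.
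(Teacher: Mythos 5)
Your proof is correct and follows exactly the route the paper intends: the paper derives this proposition ``by an easy induction on $n$'' from Lemma~\ref{Lem:Models:Lstar}(b), which is precisely your argument. Your explicit remark that the lemma must be read generically over any finite alphabet is a helpful clarification but does not constitute a different approach.
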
 

\begin{proposition}
For a logic $L$ and $n<\omega$, $\extRegn{L}{n}$ is conservative over $L$.
\end{proposition}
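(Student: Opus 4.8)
The plan is to argue by contraposition, reducing everything to the fact (recorded just after Lemma~\ref{Lem:AF:Mo:FMP}) that every normal logic is complete with respect to the class of its own models. Unwinding the definition, it suffices to show: if $A\in\Fm(\Sigma)$ and $L\nproves A$, then $\extRegn{L}{n}\nproves A$. First I would fix a model $M$ with $M\mo L$ and $M\nmodels A$; such a model exists precisely because $L\nproves A$, since $L$ is complete \wrt $\Models(L)$ (concretely, the canonical model $M_L$ works, as $M_L\mo L$ while some point of $M_L$ refutes $A$).

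Next I would pass to the expanded model $\extRegn{M}{n}$. Iterating Lemma~\ref{Lem:Models:Lstar}(a) by induction on $n$ yields $M\mo L\ \To\ \extRegn{M}{n}\mo\extRegn{L}{n}$: the base case $n=0$ is trivial, and the induction step is a single application of Lemma~\ref{Lem:Models:Lstar}(a) to the logic $\extRegn{L}{n}$ and the model $\extRegn{M}{n}$, using $\extRegn{L}{n+1}=\extReg{(\extRegn{L}{n})}$ and $\extRegn{M}{n+1}=\extReg{(\extRegn{M}{n})}$. Hence $\extRegn{M}{n}\mo\extRegn{L}{n}$.

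The remaining point is that $\extRegn{M}{n}$ still refutes $A$. Here I would record the elementary observation that $\extRegn{F}{n}$ is built on the same carrier $W$ with the same valuation $V$ and leaves every relation $R_e$ with $e\in\Sigma$ untouched, only adjoining fresh relations for the new modalities; hence a straightforward induction on formula structure shows that for every $\Sigma$-formula $B$ and every point $x$ one has $M,x\mo B$ iff $\extRegn{M}{n},x\mo B$. In particular $\extRegn{M}{n}\nmodels A$. Thus $\extRegn{M}{n}$ is a model of $\extRegn{L}{n}$ refuting $A$, so $\extRegn{L}{n}\nproves A$, which is exactly the contrapositive we wanted.

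I do not expect a genuine obstacle. Completeness \wrt models removes any need for $L$ or $\extRegn{L}{n}$ to be Kripke complete, and the truth-preservation induction is immediate because the $\Sigma$-reduct of $\extRegn{M}{n}$ is literally $M$. The only thing deserving a sentence of care is the bookkeeping in the induction iterating Lemma~\ref{Lem:Models:Lstar}(a), so that at each stage it is applied to the correctly-paired logic and model $(\extRegn{L}{k},\extRegn{M}{k})$.
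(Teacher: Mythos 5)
Your proof is correct and is essentially the paper's own argument: the paper also proves conservativity by taking the canonical model $M_L$, expanding it to $\extRegn{M_L}{n}$, and applying Lemma~\ref{Lem:Models:Lstar}(a) iteratively, exactly as you do. You merely spell out the two small inductions (on $n$ and on formula structure) that the paper leaves implicit.
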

\begin{proof}
As in Lemma~\ref{Lem:Conserv:BP}, using $\extRegn{M_L}{n}$ instead of $M_L^\oplus$
and Lemma~\ref{Lem:Models:Lstar}(a).
\end{proof}

\begin{lemma}\label{ref:UnionComp}
Let $L$ be a logic over~$\Sigma$, $e,c\in \Sigma$. Let $L_1$ and $L_2$ be the logics 
over ${\Sigma\cup\{g\}}$, where ${g\notin\Sigma}$,
such that
\begin{center}
\begin{tabular}{l}
$L_1$ extends $L$ with the axiom \ $\BX{g}p\lra\BX{e}p\wedge \BX{c}p$,\\
$L_2$ extends $L$ with the axiom \ $\BX{g}p\lra\BX{e}\BX{c}p$.
\end{tabular}
\end{center}
If $\mL$ admits definable filtration, then so do $\Models(L_1)$ and $\Models(L_2)$.
\end{lemma}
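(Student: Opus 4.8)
The plan is to reduce both claims to the hypothesis that $\mL$ admits definable filtration, by showing that any definable filtration of the reduct $N=(W,(R_e)_{e\in\Sigma},V)$ of an $L_i$-model $M$ can be upgraded to a definable filtration of $M$ itself, simply by defining the accessibility relation for the new index $g$ from the filtered relations for $e$ and $c$. Since $g$ is interpreted by a Boolean/compositional combination of $R_e$ and $R_c$, the key point is that filtration commutes with these operations at the level of the minimal and maximal filtered relations, so no genuinely new filtration argument is needed beyond what $\mL$ already provides.

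\medskip

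Concretely, I would first mimic the opening of the proof of Theorem~\ref{Th:ADL:LBP}: given a finite $\Sub$-closed $\Gamma\se\Fm(\SoS\cup\{g\})$ (where I abuse notation for the relevant alphabet) and a model ${M\models L_i}$, pass to a definable variant $M^\sigma$ that replaces each ${\Fi\in\Gamma}$ by a fresh variable $q_\Fi$, so that modalities over $e,c,g$ applied to the $q_\Fi$ express exactly the original $\Gamma$-formulas. Then form the reduct $N^\sigma=(W,(R_e)_{e\in\Sigma},V^\sigma)\models L$, choose the finite $\Sub$-closed set $\Delta\se\Fm(\Sigma)$ of $\Sigma$-formulas built from the $q_\Fi$ and the boxes $\BX{e},\BX{c}$ occurring (after unfolding $\BX{g}$ via the defining axiom), and invoke the hypothesis to obtain a definable $\Delta$-filtration $\ff{N^\sigma}$ through some finite ${\Phi\su\Delta}$ with ${\ff{N^\sigma}\models L}$. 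The filtered model for $L_i$ is then $\fM:=(\fW,(\ff{R_e})_{e\in\Sigma},\ff{R_g},\ff{V})$ where I set ${\ff{R_g}:=\ff{R_e}\cup\ff{R_c}}$ in the $L_1$ case and ${\ff{R_g}:=\ff{R_e}\circ\ff{R_c}}$ in the $L_2$ case. This $\fM$ is automatically an $\extReg{}$-style reduct of an $L_i$-model because the defining axiom for $\BX{g}$ is valid on any frame where $R_g$ is so defined (the analogue of Lemma~\ref{Lem:Models:Lstar}(b)); and it is a definable filtration through $\Phi^\sigma$ by part~(1) of the argument below.

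\medskip

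The verification then splits exactly as in Theorem~\ref{Th:ADL:LBP}: the clauses {\sf(var)}, {\sf(min)} and {\sf(max)} for the old indices $e,c$ are inherited verbatim from the $\Delta$-filtration via the inclusions ${R^{\mathsf{min}}_\sim\se\ff{R_e}\se R^{\mathsf{max}}_{\sim,\Delta}}$, using ${\B q_\Fi\in\Delta}$ to push $\Delta$-maximality up to $\Gamma$-maximality over $e$ and $c$. The genuinely new work is the {\sf(min)} and {\sf(max)} conditions for $g$. For {\sf(max)}, one checks ${\ff{R_g}\se R^{\mathsf{max}}_{\sim,\Gamma}}$ for index $g$: in the $L_1$ case, from ${\BX{g}\Fi\in\Gamma}$ and the axiom $\BX{g}p\lra\BX{e}p\wedge\BX{c}p$ one gets ${M,x\models\BX{e}\Fi\wedge\BX{c}\Fi}$, and $\ff{R_g}=\ff{R_e}\cup\ff{R_c}$ together with the already-established maximality over $e$ and $c$ yields ${M,y\models\Fi}$; in the $L_2$ case, from $\BX{g}p\lra\BX{e}\BX{c}p$ and $\ff{R_g}=\ff{R_e}\circ\ff{R_c}$ one chains maximality over $e$ (applied to the formula $\BX{c}\Fi$, which forces $\BX{c}\Fi\in\Gamma$ by $\Sub$-closure) and then over $c$. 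For {\sf(min)}, one shows ${(R_g)^{\mathsf{min}}_\sim\se\ff{R_g}}$, which follows from ${(R_g)^{\mathsf{min}}_\sim}$ being contained in $(R_e)^{\mathsf{min}}_\sim\cup(R_c)^{\mathsf{min}}_\sim$ (case $L_1$) or in $(R_e)^{\mathsf{min}}_\sim\circ(R_c)^{\mathsf{min}}_\sim$ (case $L_2$) — a routine unfolding of the definition of the minimal filtered relation of a union/composition — and then applying ${R^{\mathsf{min}}_\sim\se\ff{R_e}}$ and the analogue for $c$.

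\medskip

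The main obstacle I anticipate is purely bookkeeping rather than conceptual: one must make sure that $\Delta$ contains enough boxed formulas $\BX{e}q_\Fi$ and $\BX{c}q_\Fi$ — and, for the $L_2$ case, also $\BX{e}q_{\BX{c}\Fi}$ for each ${\BX{g}\Fi\in\Gamma}$ — so that the maximal-relation inclusions over $e$ and $c$ can actually be invoked for the formulas needed to witness the $g$-clauses. This is the same subtlety that in Theorem~\ref{Th:ADL:LBP} forced the set $\Delta$ to contain $\B q_{\BP\Fi}$; here the composition axiom for $L_2$ plays the role of the axiom $\BP p\to\B\BP p$, so I would define $\Delta$ to include $q_\psi$ and $\BX{e}q_\psi,\BX{c}q_\psi$ for every ${\psi\in\Sub(\Gamma)}$, which is still finite and $\Sub$-closed and suffices for both cases. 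Once $\Delta$ is chosen this way, every clause goes through by the inclusions in $(*)$ exactly as in the proof of Theorem~\ref{Th:ADL:LBP}, and both $\Models(L_1)$ and $\Models(L_2)$ admit definable filtration.
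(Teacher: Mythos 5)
Your overall strategy is the intended one (the paper's own ``proof'' is only a pointer to the frame-level analogue, Lemma~2.3 of \cite{KSZ:AiML:2014}), and your treatment of the maximal-relation clauses --- including the bookkeeping that puts $\BX{e}q_\psi$, $\BX{c}q_\psi$ and, for $L_2$, also $q_{\BX{c}\Fi}$ and $\BX{e}q_{\BX{c}\Fi}$ into $\Delta$ --- is sound. One slip there: your parenthetical claim that $\Sub$-closure ``forces $\BX{c}\Fi\in\Gamma$'' is false, since $\BX{c}\Fi$ is not a subformula of $\BX{g}\Fi$; this is precisely why the extra formulas $q_{\BX{c}\Fi},\BX{e}q_{\BX{c}\Fi}$ must be added to $\Delta$ by hand, as you in effect do in your final paragraph.

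The genuine gap is in the minimal-relation clause for~$g$. You justify $(R_g)^{\mathsf{min}}_\sim\se(R_e)^{\mathsf{min}}_\sim\cup(R_c)^{\mathsf{min}}_\sim$ (resp.\ ${}\se(R_e)^{\mathsf{min}}_\sim\circ(R_c)^{\mathsf{min}}_\sim$) as ``a routine unfolding of the definition'', but that unfolding presupposes the set-theoretic inclusion $R_g\se R_e\cup R_c$ (resp.\ $R_g\se R_e\circ R_c$) in $M$ itself. A mere \emph{model} of $L_1$ need not satisfy it: $M\models L_1$ only forces $\BX{g}\psi$ and $\BX{e}\psi\wedge\BX{c}\psi$ to agree for every formula~$\psi$, i.e.\ on definable sets. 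For instance, if $b$ and $c$ are modally indistinguishable dead ends, $R_e=\{(a,b)\}$, $R_c=\varnothing$ and $R_g=\{(a,b),(a,c)\}$, then every substitution instance of the axiom is true in $M$ although $R_g\not\se R_e\cup R_c$. This is exactly the frames-versus-models distinction flagged in the footnote to Theorem~\ref{Th:ADL:LBP}, and it is why the paper needs Lemma~\ref{Lem:A3:min:filtration} instead of the ``quite simple'' frame computation. The repair uses the same device as that lemma: given $x\,R_g\,y$, take a formula $\chi$ defining the class $[y]_{\sim_\Phi}$ (possible since $\Phi$ is finite); from $M,x\nmodels\BX{g}\neg\chi$ and the truth in $M$ of the instance $\BX{g}\neg\chi\lra\BX{e}\neg\chi\wedge\BX{c}\neg\chi$ (resp.\ $\lra\BX{e}\BX{c}\neg\chi$) one extracts an $R_e$- or $R_c$-successor (resp.\ an $R_e\circ R_c$-chain) of $x$ ending in a point $\sim_\Phi$-equivalent to~$y$, whence $\fx\,(\ff{R_e}\cup\ff{R_c})\,\fy$ (resp.\ $\fx\,(\ff{R_e}\circ\ff{R_c})\,\fy$) by the minimal-relation inclusions you already have for $e$ and~$c$. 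With that step supplied, your argument goes through.
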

\begin{proof}
Straightforward.
Details can be reconstructed from the proof of Lemma 2.3 in \cite{KSZ:AiML:2014},
which is the analog of our lemma for the classes of frames.
\end{proof}

\begin{theorem}\label{thm:iterated}
Let $L$ be a logic over a finite alphabet~$\Sigma$.
If the class of its models $\mL$ admits definable filtration, then, for every $n<\omega$, we have:
\begin{enumerate}
\item $\Models(\extRegn{L}{n})$ admits definable filtration.
\item $\extRegn{L}{n}$ has the finite model property; a fortiori, $\extRegn{L}{n}$ is Kripke complete.
\item If $L$ is finitely axiomatizable, then $\extRegn{L}{n}$ 
is decidable. 
\item If the class of finite frames of $L$ is decidable, then 
$\extRegn{L}{n}$  is co-recursively enumerable. 
\end{enumerate} 
\end{theorem}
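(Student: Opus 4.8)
The plan is to establish (1) by a double induction and then read off (2)--(4) as consequences.

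For (1), an outer induction on $n$ reduces everything to a single claim: \emph{if $\Models(K)$ admits definable filtration for a logic $K$ over a finite alphabet, then so does $\Models(\extReg{K})$}. The base case $n=0$ is the hypothesis that $\mL$ admits definable filtration, and the inductive step applies this claim to $K=\extRegn{L}{n}$, whose alphabet $\extRegn{\Sigma}{n}$ is finite because $\Sigma$ is and each application of $\extReg{\cdot}$ enlarges the alphabet only finitely. To prove the claim, note that since $\Sigma$ is finite, $\extReg{\Sigma}$ adds only finitely many new indices $g_1,\dots,g_m$, each of the form $e\cup c$, $e\circ c$, or $e^+$ whose \emph{source} indices $e,c$ already belong to $\Sigma$. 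I would adjoin them one at a time: set $K_0=K$ and let $K_{t+1}$ extend $K_t$ by the single defining axiom for $g_{t+1}$. Because the source indices of each $g_{t+1}$ lie in $\Sigma$, hence in the alphabet of $K_t$, every step is an instance of Lemma~\ref{ref:UnionComp} (union and composition) or of the multimodal form of Theorem~\ref{Th:ADL:LBP} (transitive closure), and each therefore preserves the property ``the class of models admits definable filtration''. Since the smallest normal logic generated by a set of axioms is independent of the order in which they are adjoined, $K_m=\extReg{K}$, which proves the claim.

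Parts (2) and (3) follow quickly. Given (1), Lemma~\ref{Lem:AF:Mo:FMP} yields the FMP of $\extRegn{L}{n}$ and, a fortiori, its Kripke completeness, which is (2). For (3), each application of $\extReg{\cdot}$ adjoins only finitely many axioms (one for union, one for composition, three for transitive closure, per new index), so if $L$ is finitely axiomatizable then so is $\extRegn{L}{n}$ by induction on $n$; together with the FMP from (2), Harrop's theorem gives decidability.

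For (4) the key ingredient is a \emph{frame characterization}: $\Frames(\extRegn{L}{n})=\{\,\extRegn{F}{n}\mid F\valid L\,\}$. At the level of frames each defining axiom forces its new relation exactly: $\BX{g}p\lra\BX{e}p\wedge\BX{c}p$ forces $R_g=R_e\cup R_c$, $\BX{g}p\lra\BX{e}\BX{c}p$ forces $R_g=R_e\circ R_c$, and the three transitive-closure axioms force $R_g=(R_e)^+$ by the lemma preceding Lemma~\ref{Lem:Models:LP}. Hence every $\extReg{K}$-frame has the shape $\extReg{F}$, and then $F\valid K$ by Lemma~\ref{Lem:Models:Lstar}(b); iterating gives the characterization. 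Using the FMP from (2), a formula $\varphi$ fails to be in $\extRegn{L}{n}$ iff it is refuted on $\extRegn{F}{n}$ for some \emph{finite} frame $F$ with $F\valid L$. This is a semi-decision procedure for non-theoremhood: enumerate all finite $\Sigma$-frames $F$, retain those with $F\valid L$ (decidable by hypothesis), form the finite frame $\extRegn{F}{n}$ (computable), and test $\extRegn{F}{n}\nvalid\varphi$ (decidable, since only the finitely many valuations of the variables of $\varphi$ over finitely many points are relevant). The procedure halts exactly when $\varphi\notin\extRegn{L}{n}$, so the non-theorems are recursively enumerable, \ie $\extRegn{L}{n}$ is co-recursively enumerable.

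The main obstacle lies in the single-step claim underlying (1): one must verify that the auxiliary results apply uniformly as new modalities accumulate. This succeeds precisely because, within a single application of $\extReg{\cdot}$, every new index is built from base indices of $\Sigma$ alone --- never from another new index --- so the source relations stay available and no axiom at this level couples two new modalities. The analogous delicate point for (4) is that the frame characterization, and thus the availability of refuting frames of the special shape $\extRegn{F}{n}$, relies on the \emph{frame} validity of the induction axiom forcing $S=R^+$, even though that axiom fails in the canonical frame --- which is exactly why the FMP, not canonicity, is the appropriate tool here.
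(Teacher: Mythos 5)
Your proof follows the paper's own argument: part (1) by induction on $n$ via Lemma~\ref{ref:UnionComp} and (the multimodal form of) Theorem~\ref{Th:ADL:LBP}, part (2) via Lemma~\ref{Lem:AF:Mo:FMP}, part (3) via Harrop's theorem, and part (4) via the characterization of the finite frames of $\extRegn{L}{n}$ as the frames $\extRegn{F}{n}$ with $F\valid L$ finite. You simply spell out details the paper leaves implicit (adjoining the finitely many new indices one at a time, and the explicit semi-decision procedure for non-theoremhood), so the proposal is correct and essentially identical in approach.
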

\begin{proof} 
(i) By Theorem \ref{Th:ADL:LBP} and Lemma \ref{ref:UnionComp},
if $\mL$ admits definable filtration, then so does
$\Models(\extReg{L})$.
So, (i) follows by induction on~$n$.

(ii) By Lemma~\ref{Lem:AF:Mo:FMP}.

(iii) Note that if 
$L$ is finitely axiomatizable, then so is $\extRegn{L}{n}$.
The claim then follows from Harrop's Theorem (see Section~\ref{Sect:Prelim}).

(iv) If the class of finite frames of $L$ is decidable, then the class 
of finite frames of $\extRegn{L}{n}$ is decidable, too.
In this case $\extRegn{L}{n}$  is co-recursively enumerable, since $\extRegn{L}{n}$ is the logic of its finite frames.
\end{proof}


\subsection{Expansions of locally finite logics}\label{Sect:Loc:Finite}

 
For $k\leq \omega$, a {\em $k$-formula} is a formula in proposition letters $p_i$, $i<k$.
Recall that a logic is called \emph{locally finite} (or \emph{locally tabular}),
if, for every $k<\omega$, there exist
only finitely many $k$-formulas up to the equivalence in~$L$.


\begin{theorem}
If $L$ is locally tabular, then $\mL$ admits definable filtration.
\end{theorem}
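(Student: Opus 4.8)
The plan is to show that local tabularity gives us a \emph{canonical} choice of a defining set $\Phi$ that works uniformly, so that the minimal filtration is automatically definable and stays inside $\mL$. First I would recall what local tabularity buys us: for any finite $\Sub$-closed set $\Gamma$, let $k$ be the number of propositional variables occurring in $\Gamma$. Since $L$ is locally tabular, there are only finitely many $k$-formulas up to $L$-equivalence; fix a finite set $\Phi$ of representatives, chosen so that $\Gamma\se\Phi$ (we may close under the finitely many equivalence classes). The key consequence is that on any model ${M\models L}$, the relation $\sim_\Phi$ is \emph{already as fine as it can get} in the following sense: every formula in the variables of $\Gamma$ is $M$-equivalent to some member of $\Phi$, so $\sim_\Phi$ coincides with the relation ``$x,y$ agree on all $k$-formulas.''

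Next I would take an arbitrary model ${M=(W,R,V)\in\mL}$ and a finite $\Sub$-closed $\Gamma$, and form the minimal filtration through this $\Phi$: put ${\ff{W}=W/{\sim_\Phi}}$, ${\ff{R}=\MinR{\sim_\Phi}}$, with the canonical valuation on variables of $\Gamma$. This is by construction a \emph{definable} filtration (the equivalence relation is $\sim_\Phi$ for an explicit finite $\Phi$), and it respects $\Gamma$ since ${\Gamma\se\Phi}$ forces ${\sim_\Phi\,\se\,\sim_\Gamma}$. The two filtration inclusions ${\MinR{\sim_\Phi}\se\ff{R}\se\MaxR{\Gamma}}$ hold because the left one is an equality by choice of $\ff{R}$ and the right one follows from the general fact ${\MinR{\sim}\se\MaxR{\Gamma}}$ noted after Definition~\ref{Def:Filtration}. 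So $\ff{M}$ is a legitimate definable $\Gamma$-filtration of $M$; what remains is to verify that ${\ff{M}\in\mL}$, i.e.\ ${\ff{M}\models L}$.

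The main obstacle, and the real content, is showing ${\ff{M}\models L}$. The plan is to exploit that the minimal filtered frame through $\Phi$ is, up to $L$-equivalence, a \emph{bounded morphic image} (p-morphic image) of $M$, or at least that its theory in the $k$ relevant variables is forced to agree with $M$'s. Concretely, I would argue that because $\Phi$ captures \emph{all} $k$-formulas up to $L$-equivalence, the map ${x\mapsto\ff{x}}$ satisfies the p-morphism back-condition for the minimal relation: if ${\ff{x}\,\MinR{\sim_\Phi}\,\ff{y}}$, then by definition there are ${x'\sim_\Phi x}$, ${y'\sim_\Phi y}$ with ${x'Ry'}$, and since $\sim_\Phi$ is an $L$-bisimulation-like congruence (points in one class validate the same $k$-formulas), one transfers the successor back to $x$ itself. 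This makes $\ff{M}$ a p-morphic image of $M$ (on the relevant propositional fragment), and validity of $L$ is preserved under p-morphic images; alternatively, one shows $\ff{M}$ is differentiated and finite and invokes Lemma~\ref{Lem:Fin:Diff:Model} after checking all $L$-theorems hold. I expect the delicate point to be justifying the back-condition: it needs that $\sim_\Phi$-classes are not merely $\Gamma$-indistinguishable but indistinguishable by \emph{every} $k$-formula, which is exactly where local tabularity is indispensable, since without it $\sim_\Phi$ would separate points that some unlisted formula distinguishes and the minimal relation could fail to validate $L$.
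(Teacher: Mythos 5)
Your proposal is correct and takes essentially the same route as the paper: both filtrate through (a finite set of representatives of) all $k$-formulas and conclude that the quotient is a p-morphic image of $M$, hence still a model of $L$, with local tabularity guaranteeing finiteness. The only cosmetic difference is that you take the minimal rather than the maximal (``canonical'') filtration, but through the set of all $k$-formulas these coincide --- your back-condition argument via the definability of each $\sim_\Phi$-class is precisely the reason why --- so the two proofs are essentially identical.
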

\begin{proof}
Let $M$ be a model, $M\mo L$, $\Gamma$ a finite $\Sub$-closed set of formulas. 
For some $k<\omega$, every formula in $\Gamma$ is a $k$-formula.
Let $\Phi$ be the set of all $k$-formulas. 
Consider the maximal filtration $\ff{M}$ of $M$ through $\Phi$ (in \cite{Shehtman:AiML:2014},
such filtrations are called {\em canonical}).
Since $\vL$ is locally tabular, $\ff{M}$ is finite. In this case
$\ff{M}$ is a p-morphic image of $M$ (for details, see \cite[Proposition 2.32]{Shehtman:AiML:2014}).
Hence $\ff{M}\mo L$, as required.
\end{proof}

\begin{corollary}
If $L$ is locally tabular, then $\extRegn{L}{n}$ has the FMP, for every ${n<\omega}$.
\end{corollary}


\subsection{Agents that admit filtration}

Here we consider a special kind of definable filtration, called \emph{strict filtration}.

\begin{definition}
If, in terms of Definition \ref{Def:Filtration}, 
$\sim\,=\,\sim_\Gamma$, then we call the filtration  
$\ff{M}$ \emph{strict}. 
The corresponding notions ``a class of frames (or models) \emph{admits strict filtration}''
are introduced in the obvious way.
\end{definition}

Strict filtration is the most standard variant of filtration; it is well-known that the classes of frames of  the logics $\bK,\mathbf{T},\mathbf{K4},\mathbf{S4},\mathbf{S5}$ admit strict filtration
(for the logics $\bK$ and $\mathbf{T}$, even the \emph{minimal} strict filtration works;
for $\mathbf{K4}$, $\mathbf{S4}$, $\mathbf{S5}$, strict filtration is obtained by
taking the
transitive closure of the minimal filtered relation \cite{Segerberg1968}). 
\ISH{DC this ref}

\smallskip
Let us recall the notion of the {\em fusion} of logics.
Let $L_1,\ldots,L_k$ be logics over  finite alphabets $\Sigma_1, \ldots, \Sigma_k$.
Without loss of generality we assume that these alphabets are disjoint.
The {\em fusion}
$L_1*\ldots * L_k$ of these logics is the smallest
normal logic over the alphabet $\Sigma=\Sigma_1\cup \ldots \cup \Sigma_k$ that
contains $L_1\cup \ldots \cup L_k$.

It is well-known that the fusion operation preserves
Kripke completeness, the finite model property, and decidability  \cite{WolterKracht1991}.
We observe that it also preserves the admits strict filtration property.
\def\cG{\mathcal{G}} 
\begin{theorem}[Fusion and strict filtration]\label{Th:Fusion:Strict:Filtration}
If classes of frames $\Frames(L_i)$, $1\leq i\leq k$,  admit strict filtration,
then $\Frames{(L_1*\ldots *L_k)}$ admits strict filtration. 
\end{theorem}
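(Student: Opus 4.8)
The plan is to reduce the multimodal problem to the single-agent ones via the semantic description of fusion frames. First I would record the basic fact that a $\Sigma$-frame $F=(W,(R_e)_{e\in\Sigma})$ validates $L_1*\dots*L_k$ if and only if, for each $i$, the reduct $F_i=(W,(R_e)_{e\in\Sigma_i})$ validates $L_i$. Indeed, the set of formulas valid on $F$ is itself a normal logic, so $F\valid L_1*\dots*L_k$ iff $F$ validates $\bigcup_i L_i$, iff $F\valid L_i$ for every $i$; and since $L_i$ consists of $\Sigma_i$-formulas, whose validity on $F$ depends only on the relations indexed by $\Sigma_i$, we have $F\valid L_i$ iff $F_i\valid L_i$. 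Consequently, to build a strict $\Gamma$-filtration of a $\Frames(L_1*\dots*L_k)$-model $M=(W,(R_e)_{e\in\Sigma},V)$ whose frame again validates the fusion, it suffices to fix the quotient $\fW=W/{\sim_\Gamma}$ once, and then, \emph{independently for each agent} $i$, define the relations $\fR_e$ ($e\in\Sigma_i$) so that the reduct $(\fW,(\fR_e)_{e\in\Sigma_i})$ validates $L_i$ while respecting the bounds coming from $\Gamma$.

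The heart of the argument is to present the single quotient $W/{\sim_\Gamma}$ as a strict filtration quotient for $\Frames(L_i)$. For each $i$ I would ``flatten'' $\Gamma$ from agent $i$'s point of view: introduce a fresh variable $p_{\BX{e}\psi}$ for each subformula $\BX{e}\psi$ of $\Gamma$ with $e\notin\Sigma_i$, and let $\phi^{(i)}$ be the $\Sigma_i$-formula obtained from $\phi\in\Gamma$ by replacing each \emph{maximal alien} subformula $\BX{e}\psi$ ($e\notin\Sigma_i$) by $p_{\BX{e}\psi}$. Define $V_i$ to agree with $V$ on the old variables and to set $V_i(p_{\BX{e}\psi})=\{x\mid M,x\valid\BX{e}\psi\}$. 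A routine induction, exactly as for Lemma~\ref{Lem:Def:variant}, gives $M_i',x\models\phi^{(i)}\Iff M,x\models\phi$ in the $\Sigma_i$-model $M_i'=(W,(R_e)_{e\in\Sigma_i},V_i)$, since the $\Sigma_i$-relations are untouched and the alien subformulas have been frozen into atoms of the correct extension. Putting $\Gamma_i=\Sub(\{\phi^{(i)}\mid\phi\in\Gamma\})$, every formula of $\Gamma_i$ has in $M_i'$ the same truth-set as some formula of $\Gamma$ in $M$ (using that $\Gamma$ is $\Sub$-closed), and conversely each $\phi^{(i)}$ tracks $\phi$; hence the induced partitions coincide, i.e. $\sim_{\Gamma_i}$ in $M_i'$ equals $\sim_\Gamma$ in $M$. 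Because $F_i\valid L_i$, the model $M_i'$ is a $\Frames(L_i)$-model, so by hypothesis it has a strict $\Gamma_i$-filtration $\ff{M_i'}=(W/{\sim_{\Gamma_i}},(\fR_e)_{e\in\Sigma_i},\ff{V_i})$ whose frame validates $L_i$. Crucially all these filtrations live on the one set $\fW=W/{\sim_\Gamma}$, so I can glue them: set $\fF=(\fW,(\fR_e)_{e\in\Sigma})$ and $\fM=(\fF,\fV)$ with $\fV$ the canonical valuation on $\Var(\Gamma)$.

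Finally I would verify that $\fM$ is a strict $\Gamma$-filtration of $M$ with $\fF\valid L$. The latter is immediate: each reduct $\fF_i$ is the $L_i$-validating frame of $\ff{M_i'}$, so $\fF_i\valid L_i$ for all $i$, whence $\fF\valid L$ by the characterization above. As for the filtration conditions, the underlying relation is literally $\sim_\Gamma$ and the valuation is canonical by construction; the inclusion $\Min[e]{\sim_\Gamma}\se\fR_e$ is inherited from the $L_i$-filtration, since $\Min[e]{\sim}$ depends only on $\sim$ and $R_e$. The only delicate inclusion is $\fR_e\se\Max[e]{\Gamma}$: the $L_i$-filtration yields merely $\fR_e\se\Max[e]{\Gamma_i}$, so I must check $\Max[e]{\Gamma_i}\se\Max[e]{\Gamma}$ for $e\in\Sigma_i$. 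This holds because for $\BX{e}\psi\in\Gamma$ with $e\in\Sigma_i$ the top modality is not alien, hence $(\BX{e}\psi)^{(i)}=\BX{e}\psi^{(i)}\in\Gamma_i$ with $\psi^{(i)}\in\Gamma_i$, and the truth-preservation of $\cdot^{(i)}$ converts the defining implication of $\Max[e]{\Gamma_i}$ for $\BX{e}\psi^{(i)}$ into the one for $\BX{e}\psi$. The main obstacle is precisely this balancing act: strictness forbids refining the partition, so everything hinges on engineering $\sim_{\Gamma_i}=\sim_\Gamma$ simultaneously for all agents (via the frozen-atom translation) while keeping enough genuinely modal formulas inside each $\Gamma_i$ to force $\Max[e]{\Gamma_i}\se\Max[e]{\Gamma}$; the $\Sub$-closedness of $\Gamma$ together with the truth-lemma for $\cdot^{(i)}$ is exactly what makes these two demands compatible.
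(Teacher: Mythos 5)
Your proof is correct and follows essentially the same route as the paper: freeze the non-$\Sigma_i$ material of $\Gamma$ into fresh atoms so that each agent's induced partition coincides with $\sim_\Gamma$, strictly filtrate each reduct on that common quotient, and glue the resulting relations, checking the max-condition via the truth-preserving translation. The only (immaterial) difference is that the paper replaces every $\varphi\in\Gamma$ by an atom $q_\varphi$ and works with $\Gamma_i=\{q_\varphi\}\cup\{\B q_\varphi\mid\B_i\varphi\in\Gamma\}$, whereas you freeze only the maximal alien subformulas and take the $\Sub$-closure.
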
 
\begin{proof}
The idea is the same as in the proof of Theorem \ref{Th:ADL:LBP}.
To simplify notation, we consider the case of unimodal logics.
Let $L={L_1*\ldots * L_k}$, 
$M=(F,V)$ be a model on an $L$-frame
$F=(W,R_1,\ldots, R_k)$, $\Gamma\se\Fm(\B_1,\ldots,\B_k)$ be finite 
and $\Sub$-closed.
For $\vf\in \Gamma$, we take fresh variables $q_\vf$, 
and consider a model $M'=(F,V')$ such that 
\begin{center}
$M,x\mo\vf \ \text{ iff } \ M',x\mo \vf \ \text{ iff } \ M',x\mo q_\vf$
\end{center}
 for all $x$ in $M$. 
For $1\leq i\leq k$, we put:
\begin{center}$
\Gamma_i=\{q_\vf\mid \vf\in \Gamma\} \cup
\{\B q_\vf \mid \B_i \vf\in \Gamma\};
$\end{center}
remark that $\Gamma_i\se\Fm(\B)$.  
Let $\sim_i$ be the equivalence induced by $\Gamma_i$ in the model 
$M_i=(W,R_i,V')$, and   $\sim_\Gamma$
the equivalence induced by $\Gamma$ in $M$.  
Observe that 
\begin{center}$
M_i,x\mo \B q_\vf \ \text{ iff } \ M,x\mo \B_i \vf \ \text{ for all } \ \vf\in\Gamma. 
\lefteqn{\qquad\qquad\qquad(*)}
$\end{center}
Therefore, 
one can see
that $\sim_i\;=\;\sim_\Gamma$ for all $i$. 
Put $\fW=W{/}{{\sim}_\Gamma}$.
For each $i$, there exists a filtration $\fM_i=(\fW,\ff{R_i},\fV_i)$ of $M_i$ through $\Gamma_i$ such that 
$(\fW,\ff{R_i})\mo L_i$.
The valuations $\fV_i$ coincide on the variables $q_\vf$.
W.l.o.g., they also coincide on other variables (since they do no occur in $\Gamma_i$),
and that $\fM_i,\ff{x}\mo p$ iff $M,x\mo p$ for each variable ${p\in \Gamma}$.
The resulting valuation on $\fW$ is denoted by~$\fV$.  

Consider the model  $\fM=(\fW,\ff{R_1},\ldots,\ff{R_k},\fV)$. 
Note that its frame validates the fusion $L$. 
We claim that $\ff{M}$ is a filtration of $M$ through $\Gamma$. 
Clearly, $\ff{R}_i$ contains the $i$-th minimal filtered relation. 
To check that $\ff{R_i}$ is contained in the $i$-th maximal filtered relation, assume that 
$\ff{x}\ff{R_i}\ff{y}$, $M,x\mo\B_i\vf$, and 
$\B_i\vf\in \Gamma$.
Then $M_i,x\mo\B q_\vf$, by~(*).
Since $\ff{M_i}$ is a filtration of $M_i$ through $\Gamma_i$ and ${\B q_\vf\in \Gamma_i}$,
we have $\fM_i,\ff{y}\mo q_\vf$. By Filtration lemma, ${M_i,y\mo q_\vf}$. Hence, ${M',y\mo q_\vf}$
and we conclude that ${M,y\mo\vf}$, as required.
\end{proof}

\begin{theorem}\label{thm:agents}
Let $L_1,\ldots,L_k$ be canonical logics 
and their classes of frames
$\Frames(L_i)$, $1\leq{i\leq k}$, admit strict filtration.   
Then, for every $n<\omega$, the logic
$\extRegn{(L_1*\ldots * L_k)}{n}$ has the finite model property. 
\end{theorem}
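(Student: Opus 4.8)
The plan is to combine the machinery above into the chain
\[
\text{strict filtration of }\Frames(L_i)
\ \TO\ \fL\text{ ADF}
\ \TO\ \mL\text{ ADF}
\ \TO\ \extRegn{L}{n}\text{ has the FMP,}
\]
where $L:=L_1*\ldots*L_k$, and then to read off the conclusion from Theorem~\ref{thm:iterated}. The only ingredient not already isolated as a lemma above is that the fusion of canonical logics is again canonical.

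First I would put $L=L_1*\ldots*L_k$. Since each $\Frames(L_i)$ admits strict filtration, Theorem~\ref{Th:Fusion:Strict:Filtration} yields that $\Frames(L)$ admits strict filtration. By definition a strict filtration is a filtration with $\sim\,=\,\sim_\Gamma$, and $\sim_\Gamma$ is of the form $\sim_\Phi$ with the finite set $\Phi=\Gamma$; hence every strict filtration is in particular a definable filtration. Thus $\fL$ admits definable filtration.

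Next I would transfer this from frames to models, which is precisely where canonicity enters. Each $L_i$ is canonical, and the fusion operation preserves canonicity (by the same analysis as for the other properties listed in~\cite{WolterKracht1991}); therefore $L$ is canonical. Now Theorem~\ref{Th:ADF:Fr:Mod} applies to the canonical logic $L$: its class of frames $\fL$ admits definable filtration iff its class of models $\mL$ does. Since we have just established the former, $\mL$ admits definable filtration.

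Finally, having $\mL$ admit definable filtration, I would invoke Theorem~\ref{thm:iterated}(2), which gives that $\extRegn{L}{n}=\extRegn{(L_1*\ldots*L_k)}{n}$ has the finite model property for every $n<\omega$, as required. The main obstacle is the canonicity of the fusion; once that is granted (or cited), the argument is a direct concatenation of earlier results, the only wholly elementary point being that strictness of a filtration implies its definability.
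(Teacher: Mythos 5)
Your proof is correct and follows essentially the same route as the paper's: fusion preserves canonicity and (by Theorem~\ref{Th:Fusion:Strict:Filtration}) strict filtration of frames, then Theorem~\ref{Th:ADF:Fr:Mod} transfers definable filtration to models, and Theorem~\ref{thm:iterated} yields the FMP. Your extra remarks --- that strictness implies definability and that canonicity of the fusion needs to be cited --- are exactly the points the paper leaves implicit.
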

\begin{proof}
The fusion $L=L_1*\ldots * L_k$ is canonical. 
By Theorem~\ref{Th:Fusion:Strict:Filtration}, the class $\fL$ admits strict filtration.
Hence $\mL$ admits definable filtration, by Theorem \ref{Th:ADF:Fr:Mod}. 
Finally, $\extRegn{(L)}{n}$ has the FMP, by Theorem \ref{thm:iterated}.
\end{proof}


\subsection{A class of formulas that admit strict filtration}

We present a collection of modal formulas that admit strict (and so definable) filtration.
The obvious candidates are modal formulas whose first-order equivalents belong
to a certain FO fragment we call $\MFP$.%
\footnote{The abbreviation stems from ``\emph{p}reserved under \emph{m}inimal \emph{f}iltration''.}
 We define it inductively
as the minimal set of FO formulas satisfying the following conditions:
%
%
\begin{itemize}
\item if $x$ and $y$ are variables, $R$ is a binary relation symbol, then  $R(x,y) \in \MFP$ and $x = y \in \MFP$;
\item if $A$ and $B$ are in $\MFP$, then $(A\land B)$ and $(A\lor B)$ are in $\MFP$;
\item if $A\in \MFP$, and $v$ is a variable, then $\forall v\, A$ and $\exists v\, A$ are in $\MFP$;
\item if $x$ and $y$ are variables, $R$ is a binary relation symbol, and ${A\in\MFP}$,
then $\forall x \forall y (R(x,y) \to A)$ and $\forall x \forall y (x = y \to A)$ are in $\MFP$.
\end{itemize}

This definition is the restriction of the fragment $\mathsf{POS} + \forall \mathsf{G}$ from \cite{Libkin14}
to the first-order language with only binary predicates. 
Examples of $\MFP$-sentences are reflexivity $\forall x R(x,x)$, symmetry $\forall x \forall y (R(x,y) \to R(y,x))$, and density
$\forall x \forall y (R(x,y) \to \exists z\, (R(x,z) \land R(z,y)))$, but not transitivity.

FO counterparts of minimal filtrations
are strong onto homomorphisms. 


\begin{definition}
Given two frames $F = (W, R)$ and $F'=(W'\!,R')$,
a map $h\colon {W \to W'}$
is a \emph{strong onto homomorphism} if the following conditions hold:
\begin{itemize}
\item $h$ is onto;
\item for all $x,y\in W$, \, if $x\,R\,y$, then $h(x)\,R'\,h(y)$ (\emph{monotonicity});
\item for all $x',y' \in W'$, if $x'\,R'\,y'$, then
then there exist $x,y \in W$ such that ${h(x) = x'}$, ${h(y) = y'}$, and $x\,R\,y$ (\emph{weak lifting}).
\end{itemize}
\end{definition}

Note that a strong homomorphism $h$ from $F$ onto $F'$ induces
an equivalence $\sim$ on~$W$ defined by
$x\sim y$ iff ${h(x)=h(y)}$, and then
$F'$ is isomorphic to the 
\emph{minimal filtrated frame} $F^{\mathsf{min}}_{\sim}=(W/{\sim},R^{\mathsf{min}}_{\sim})$.
Conversely, if a model $\fM$ is a minimal filtration of a model~$M$,
then the \emph{filtration map} ${h(x)=\fx}$
(this term is also used in~\cite[Definition~2.27]{Shehtman:AiML:2014})
is a strong homomorphism from $F$ onto~$\fF$.

In \cite[Proposition~5.2]{Libkin14}, it was shown that $\MFP$-formulas are preserved under strong onto homomorphisms.
Moreover, any FO formula that is preserved under strong onto homomorphisms is equivalent
to some $\MFP$-formula~\cite{itas2015}.

\begin{definition} A modal formula $\Fi$ is called a \emph{modal\/ $\MFP$-formula}
if it has a FO equivalent (on frames) from~$\MFP$.
\end{definition}

Expressions of the the form $p \land \Diamond q \to \psi$,
where $\psi$ is a positive modal formula,
are typical examples of modal $\MFP$-formulas.
Note that these examples are Sahlqvist formulas, and hence canonical.

\begin{theorem}\label{MFP-FO}
For any set $\Phi$ of modal $\MFP$-formulas over a finite alphabet~$\Sigma$,
the class of frames $\Frames(\bK_\Sigma+\Phi)$ admits strict filtration.
\end{theorem}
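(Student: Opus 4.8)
The plan is to show that the \emph{minimal} strict filtration does the job, using the preservation of $\MFP$-formulas under strong onto homomorphisms together with the correspondence, noted just above, between minimal filtrations and such homomorphisms.

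First I would take a model $M=(F,V)$ with $F=(W,(R_e)_{e\in\Sigma})\models\bK_\Sigma+\Phi$ and a finite $\Sub$-closed set~$\Gamma$. I form the minimal strict filtration $\fM=(\fW,(\ff{R_e})_{e\in\Sigma},\fV)$ by taking $\sim\,=\,\sim_\Gamma$, $\fW=W/{\sim_\Gamma}$, the canonical valuation on variables of~$\Gamma$, and the minimal filtered relations $\ff{R_e}=R^{\mathsf{min}}_{\sim_\Gamma,e}$. Since $R^{\mathsf{min}}_{\sim,e}\se R^{\mathsf{max}}_{\sim,\Gamma,e}$ always holds and $\sim\,=\,\sim_\Gamma$, this $\fM$ is by construction a strict $\Gamma$-filtration of~$M$. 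It remains only to verify that its frame $\fF$ validates $\bK_\Sigma+\Phi$.

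Next I would check that the filtration map ${h\colon x\mapsto[x]_{\sim_\Gamma}}$ is a strong onto homomorphism from $F$ onto $\fF$ with respect to each relation~$R_e$. Surjectivity is immediate, as $h$ is the quotient map; monotonicity holds by taking $x'=x$, $y'=y$ in the definition of $R^{\mathsf{min}}_{\sim,e}$; and weak lifting is \emph{exactly} the defining condition of $R^{\mathsf{min}}_{\sim,e}$, namely ${\fx\,R^{\mathsf{min}}_{\sim,e}\,\fy}$ iff there are $x'\sim x$, $y'\sim y$ with $x'\,R_e\,y'$. Then I invoke the preservation result: each $\Fi\in\Phi$ has a FO equivalent $A_\Fi\in\MFP$, so from $F\models\Fi$ we get $F\models A_\Fi$; by \cite[Proposition~5.2]{Libkin14}, $\MFP$-formulas are preserved under strong onto homomorphisms, whence $\fF\models A_\Fi$ and therefore $\fF\models\Fi$. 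As $\bK_\Sigma$ is valid on every frame, this gives $\fF\models\bK_\Sigma+\Phi$, so $\fM$ is a strict $\Gamma$-filtration lying inside $\Frames(\bK_\Sigma+\Phi)$, as required.

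The main obstacle I anticipate is the passage to the multimodal setting: the definition of strong onto homomorphism and the cited preservation theorem are phrased for a single binary relation, so I would need to confirm that they apply when $h$ is simultaneously strong and onto for each $R_e$, the $R_e$ being treated as distinct binary predicates of one structure. This is routine, since both the inductive clauses defining $\MFP$ and the three homomorphism conditions are relation-by-relation, but it is the one point where the statement must be made carefully.
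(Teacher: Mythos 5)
Your proof is correct and follows essentially the same route as the paper's: take the minimal filtration through $\Gamma$ (which is strict), observe that the filtration map is a strong onto homomorphism, and invoke the preservation of $\MFP$-formulas under such homomorphisms to conclude that the filtered frame still validates $\Phi$. Your closing remark about extending the single-relation definitions to the multimodal signature is a reasonable point of care, but it does not alter the argument.
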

\begin{proof}
Denote $\cF = \Frames(\bK_\Sigma+\Phi)$.
Let $M=(F,V)$ be an $\cF$-model and $\Gamma$ a finite \Sub-closed set of formulas. 
Take the minimal filtration $\fM=(\fF,\fV)$ of $M$ through~$\Gamma$;
note that this filtration is strict.
Then the filtration map $h(x)=\fx$ is a strong homomorphism from $F$ onto~$\fF$.
Since the set $\Phi^*$ of MFP first-order equivalents of $\Phi$ is true in~$F$, 
it is also true in~$\fF$. Hence $\fM$ is an $\cF$-model.
\end{proof}

%

\hide{
\begin{theorem}
For any set $\Phi$ of canonical modal $\MFP$-formulas in a finite alphabet $\Sigma$ $\mathsf{Mod}(\mathbf{K}_\Sigma+\Phi)$ admits strict
\ISH{????}
filtrations.
\end{theorem}  
\begin{proof}
Since $\Phi$ is a set of canonical formulas, due to Theorem~\ref{Th:ADF:Fr:Mod},
it is sufficient to check that the class of frames for $\mathbf{K}_\Sigma+\Phi$ admits definable filtration.
This follows from Theorem~\ref{MFP-FO}.
\end{proof}
}

From Theorem \ref{thm:agents}, we obtain: 
\begin{corollary}
Let each $L_1,\ldots,L_k$ be any of the logics
$\bK,\mathbf{T},\mathbf{K4},\mathbf{S4},\mathbf{S5}$,
or a logic axiomatized by canonical modal $\MFP$-formulas.
Then, for any $n<\omega$, the logic
$\extRegn{(L_1*\ldots * L_k)}{n}$ has the finite model property.
\end{corollary}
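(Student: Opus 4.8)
The plan is to reduce the corollary directly to Theorem~\ref{thm:agents}, whose hypotheses require that each $L_i$ be canonical and that each $\Frames(L_i)$ admit strict filtration. Thus the whole task splits into checking these two properties for every logic in the stated list, and then invoking Theorem~\ref{thm:agents} verbatim with $L=L_1*\ldots*L_k$.

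First I would dispose of canonicity. The logics $\bK,\mathbf{T},\mathbf{K4},\mathbf{S4},\mathbf{S5}$ are axiomatized by the Sahlqvist formulas expressing reflexivity, transitivity, and symmetry (in the appropriate combinations), hence are canonical by the Sahlqvist completeness theorem; this is classical. For a logic of the form $\bK_\Sigma+\Phi$ with $\Phi$ a set of \emph{canonical} modal $\MFP$-formulas, canonicity holds by the standing assumption on the axioms. So every admissible $L_i$ is canonical.

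Next I would verify that each $\Frames(L_i)$ admits strict filtration, and here the argument bifurcates. For the five named logics this is precisely the well-known fact recalled in the text preceding Theorem~\ref{Th:Fusion:Strict:Filtration}: for $\bK$ and $\mathbf{T}$ the minimal strict filtration already lands in the class, while for $\mathbf{K4},\mathbf{S4},\mathbf{S5}$ one closes the minimal filtered relation transitively (Segerberg's classical construction). For a logic $\bK_\Sigma+\Phi$ axiomatized by modal $\MFP$-formulas, admissibility of strict filtration is exactly the content of Theorem~\ref{MFP-FO}. With both hypotheses of Theorem~\ref{thm:agents} established for each $L_i$, the conclusion that $\extRegn{(L_1*\ldots*L_k)}{n}$ has the finite model property for every $n<\omega$ follows at once.

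The one point requiring care is that the uniform $\MFP$ machinery does \emph{not} subsume $\mathbf{K4},\mathbf{S4},\mathbf{S5}$: transitivity is not an $\MFP$-sentence (as noted right after the definition of $\MFP$), so Theorem~\ref{MFP-FO} cannot be applied to these three logics, and one must genuinely fall back on the separate classical filtration result for them. This case distinction is the only subtlety; once it is flagged, the remainder is a direct appeal to results already in hand.
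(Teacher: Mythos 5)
Your proof is correct and follows exactly the route the paper intends: the corollary is stated as an immediate consequence of Theorem~\ref{thm:agents}, with canonicity of the five named logics and of the $\MFP$-axiomatized logics supplied by classical facts and by hypothesis respectively, and strict filtration supplied by the remarks preceding Theorem~\ref{Th:Fusion:Strict:Filtration} and by Theorem~\ref{MFP-FO}. Your observation that transitivity is not an $\MFP$-sentence, so $\mathbf{K4},\mathbf{S4},\mathbf{S5}$ must be handled by Segerberg's separate filtration argument, is accurate and matches the paper's own case split.
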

\ISH{More examples? What about $K5$ and $K45D$? Do they admit strict filtration? }

\section{Conclusions and further reseach}

We proved that if $L$ is a canonical logic, and the class of its models $\fL$
admits definable filtration, then the logic $L^\BP$ is Kripke complete and, moreover,
has the FMP (and is decidable, if $L$ was finitely axiomatizable).
The first problem we pose is whether we can weaken the pre-conditions and 
obtain completeness of $L^\BP$ without obtaining the FMP.

\medskip\noindent
{\bf Problem 1.} {\it If $L$ is canonical, then is $L^\BP$ Kripke complete?}

\medskip
The second problem deals with the possible weakening the `canonicity'
condition to just `completeness' in our result.

\medskip\noindent
{\bf Problem 2.} {\it If the logic $L$ is complete and the class of its frames $\fL$ admits definable filtration,
then does the same holds for the logic $L^\BP$?}

\medskip
The following questions have a more technical character.

\medskip\noindent
{\bf Question 1:} if we replace in the logic $\bK^\BP$
the axiom $(A2)$ ${\BP p\to\B\BP p}$ with $(A2')$ ${\BP p\to\BP\B p}$,
then will the resulting logic be complete?
Clearly, the frames of this logic are the same as for $\bK^\BP$,
so if it is complete, it must coincide with~$\bK^\BP$.
More concrete, does the logic with $(A2')$ derive $(A2)$?

\medskip\noindent
{\bf Question 2:} {\it Is the logic $\mathbf{K.2}^\BP$ Kripke complete?} (We conjecture: yes.)

Recall that the logic $\mathbf{K.2}$ extends $\bK$ with the formula $\D\B p\to\B\D p$.
It is canonical and hence
complete with respect to the class of frames $(W,R)$
that satisfy the following first-order \emph{convergence} (or Church--Rosser) condition:
\begin{center}
$\forall x,y,z\,(x\,R\,y\ \land x\,R\,z\ \To\ \exists w\,(y\,R\,w\ \land z\,R\,w))$.
\end{center}
Our main result is not applicable to this logic,
since the class of its frames $\Frames(\mathbf{K.2})$
does not admit filtration, as we shown in \cite[Theorem~5.4]{KSZ:AiML:2014}.

One can easily see that if the relation $R$ is convergent,
then so is its transitive closure~$R^+$.
So it is natural to attempt to derive the
formula $\DP\BP p\to\BP\DP p$ in $\mathbf{K.2}^\BP$.
We succeeded in deriving it (see Lemma~\ref{Lem:Convergence:TrCl} in Appendix).

\medskip\noindent
{\bf Question~3.}
In Lemma~\ref{Lem:A3:min:frame},
the bimodal formula $\BP(p\to\B p)\to (\B p \to \BP p)$
is shown to have the following property crucial for our main result:
{\it if all its substitution instances are true in some model $M=(F,V)$,
then this formula is valid on the frame of every definable minimal filtration:
if ${M\models A^*}$ then ${F^{\mathsf{min}}_{\sim\Phi}\valid A}$,
for any finite set of formulas~$\Phi$.}
Are there any other examples of such formulas?
How is this property related to the admissibility of filtration,
completeness, decidability of a logic axiomatized by such formulas?


\newpage

\Appendix

\refstepcounter{section}



\subsection{On filtration of the canonical model of a theory of a model}

\begin{lemma}[Filtration and canonical mapping]
\label{Lem:Filtr:canon:map:App}
Let $M=(W,R,V)$ be a model, $M_T=(W_T,R_T,V_T)$ the canonical model of its theory $T=\Th(M)$,
and $t\colon M\to M_T$ the canonical mapping: $t(a)=\Th(M,a)\in W_T$,
for ${a\in W}$.

Then, for any finite set of formulas $\Phi$, we have a bijection
between the (finite) quotient sets
$W/{\sim_\Phi}$ and $W_T/{\sim_\Phi}$ defined, for ${a\in W}$, by
\begin{center}
$f([a]_{\sim_\Phi})\ :=\ [t(a)]_{\sim_\Phi}$.
\end{center}
\end{lemma}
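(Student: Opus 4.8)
The plan is to isolate a single truth-transfer property of the canonical mapping, then read off well-definedness and injectivity of $f$ almost for free, and finally to handle surjectivity, which is the only step that requires real work. Recall that every point $u\in W_T$ is a complete consistent theory and that the canonical model satisfies $M_T,u\models A \Leftrightarrow A\in u$ for each formula $A$. Since $t(a)=\Th(M,a)=\{A\mid M,a\models A\}$, this yields the key equivalence
\[
M_T,t(a)\models A \quad\Iff\quad M,a\models A
\qquad\text{for all } a\in W \text{ and all formulas } A,
\tag{$\dagger$}
\]
so that $t$ preserves the truth value of \emph{every} formula.

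From $(\dagger)$ it follows at once that, for all $a,b\in W$, we have $a\sim_\Phi b$ (in $M$) iff $t(a)\sim_\Phi t(b)$ (in $M_T$), since both sides simply say that $a$ and $b$ agree on every $\varphi\in\Phi$. The forward direction shows that $f$ is well-defined, i.e. $\sim_\Phi$-equivalent inputs are sent to a common $\sim_\Phi$-class; the backward direction shows that $f$ is injective.

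For surjectivity I would fix an arbitrary $u\in W_T$ and, using the finiteness of $\Phi$, form its characteristic conjunction over $\Phi$,
\[
\chi_u \ :=\ \bigwedge_{\substack{\varphi\in\Phi\\ M_T,u\models\varphi}}\varphi
\ \wedge\ \bigwedge_{\substack{\varphi\in\Phi\\ M_T,u\not\models\varphi}}\neg\varphi,
\]
so that $M_T,u\models\chi_u$ and hence $\chi_u\in u$. Since $T\se u$ and $u$ is consistent, we get $\neg\chi_u\notin T=\Th(M)$, that is, $M\nmodels\neg\chi_u$; therefore there is a point $a\in W$ with $M,a\models\chi_u$. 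By construction $\chi_u$ fixes the truth value of each $\varphi\in\Phi$, so $a$ and $u$ agree throughout $\Phi$, and combining this with $(\dagger)$ gives $t(a)\sim_\Phi u$, i.e. $f([a]_{\sim_\Phi})=[u]_{\sim_\Phi}$.

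The main obstacle is surjectivity, and the crux is precisely that $T$ equals $\Th(M)$: this is what converts the $T$-consistency of $\chi_u$ (inherited from the consistent theory $u$) into genuine satisfiability of $\chi_u$ \emph{in the original model} $M$, thereby producing the required preimage $a$. The finiteness of $\Phi$ is essential, as it is what allows the $\Phi$-type of $u$ to be captured by a single modal formula $\chi_u$.
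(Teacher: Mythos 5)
Your proposal is correct and follows essentially the same route as the paper's proof: well-definedness and injectivity are read off from the fact that $t$ preserves the truth of all formulas, and surjectivity is obtained by satisfying in $M$ the characteristic conjunction of the $\Phi$-type of a canonical point (your $\chi_u$ is exactly the paper's $\AND(x\cap\Phi')$ with $\Phi'=\Phi\cup\{\neg B\mid B\in\Phi\}$), using $T=\Th(M)$ and the consistency of $u$ in the same way. No gaps.
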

\begin{proof}
We denote $\ff{a}:=[a]_{\sim_\Phi}$.
Note that $\ff{x}=\ff{y}$ iff ${x\cap\Phi}={y\cap\Phi}$, for all ${x,y\in W_T}$.
Hence, by definition of~$f$,
for all ${a\in W}$ and ${x\in W_T}$, we have
\begin{center}
$f(\ff{a})=\ff{x}$ \ \ $\IFF$ \ \ $t(a)\cap\Phi=x\cap\Phi$.
\end{center}

First, let us show that $f$ is well-defined and injective: for all ${a,b\in W}$:
\begin{center}
$\ff{a}=\ff{b}$
 \ $\Iff$ \ $a\sim_\Phi b$
 \ $\Iff$ \ $\Th(M,a)\cap\Phi=\Th(M,b)\cap\Phi$
 \ $\Iff$ \ $[t(a)]_{\sim_\Phi}=[t(b)]_{\sim_\Phi}$.
\end{center}

To prove
that $f$ is surjective,
take any $\ff{x}\in(W_T/{\sim_\Phi})$.
Denote $A:=\AND(x\cap\Phi')$, where $\Phi'=\Phi\cup\{\neg B\mid B\in\Phi\}$.
Clearly, ${A\in x}$. Now ${M\nmodels\neg A}$, for otherwise $\neg A\in\Th(M)={T\se x}$
and $x$ is inconsistent.

Thus, $A$ is satisfiable in~$M$, so ${M,a\models A}$ for some ${a\in W}$.
We claim that ${f(\ff{a})=\ff{x}}$, \ie, for all ${B\in\Phi}$, we have ${M,a\models B}$ iff ${B\in x}$.
If ${B\in x}$, then $B\in(x\cap\Phi')$, so ${M,a\models B}$.
If ${B\notin x}$, then $\neg B\in(x\cap\Phi')$, so ${M,a\models\neg B}$.
\end{proof}

\subsection{On differentiated filtration}

\begin{lemma}\label{Lem:Differentiated}
Assume that $\mL$ admits (definable) filtration. Then
for every finite $\Sub$-closed set of formulas $\Gamma$
and every model $M\in\mL$,
there exists a (definable)
$\Gamma$-filtration $\fM\in\cM$ of~$M$ that is a differentiated model.
\end{lemma}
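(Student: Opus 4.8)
The plan is to obtain a filtration from the hypothesis and then collapse it to its differentiated reduct, checking that the collapse is again a $\Gamma$-filtration of $M$ lying in $\mL$. So first, since $\mL$ admits (definable) filtration, I would fix a $\Gamma$-filtration $\fM=(\fW,\fR,\fV)$ of $M$ with $\fM\models L$; write $\sim$ for its underlying equivalence, so $\fW=W/{\sim}$, and in the definable case $\sim\,=\,\sim_\Phi$ for some finite $\Phi\su\Gamma$. This $\fM$ is finite but may fail to be differentiated, and the whole point is to repair this without leaving $\mL$ or destroying the filtration property.

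Next I would pass to the bisimulation quotient of $\fM$. On a finite model, modal equivalence $\approx$ coincides with bisimilarity (Hennessy--Milner, see~\cite{B:R:V:ML:2002}), hence is a bisimulation, so the natural map onto $\fM'=(\fW',(\fR)^{\mathsf{min}}_\approx,\fV')$, where $\fW'=\fW/{\approx}$, is a surjective p-morphism. Consequently $\fM'$ validates exactly the formulas that $\fM$ does, so $\fM'\models L$ and thus $\fM'\in\mL$; moreover any two distinct points of $\fM'$ are non-bisimilar, hence separated by a formula, so $\fM'$ is differentiated.

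The core verification is that $\fM'$ is a $\Gamma$-filtration of $M$ through the composite equivalence $\sim'$ on $W$ given by $x\sim' y \Iff [x]_\sim\approx[y]_\sim$ (note $\sim\,\se\,\sim'$, so $\sim'$ has finite index and $\fW'$ is identified with $W/{\sim'}$). Respect of $\Gamma$ and canonicity of the valuation on variables $p\in\Gamma$ follow because $\approx$-equivalent points of $\fM$ agree on every formula, in particular on those in $\Gamma$, and the Filtration Lemma (Lemma~\ref{Lemma:Filtration}) transfers this agreement back to $M$. For the relational condition $\MinR{\sim'}\se(\fR)^{\mathsf{min}}_\approx\se\MaxR{\Gamma}$ I would chain the inclusions $\MinR{\sim}\se\fR\se\MaxR{\Gamma}$ holding for $\fM$ with the Filtration Lemma: a witness $x\,R\,y$ for $\MinR{\sim'}$ yields $[x]_\sim\,\fR\,[y]_\sim$, so the $\approx$-classes are $(\fR)^{\mathsf{min}}_\approx$-related; and for the upper bound, if the classes are $(\fR)^{\mathsf{min}}_\approx$-related then for $\B\varphi\in\Gamma$ one pushes $M,x\models\B\varphi$ through $\fM$, using $\approx$-invariance and $\fR\se\MaxR{\Gamma}$, to reach $M,y\models\varphi$.

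The main obstacle is the definable case: I must exhibit a finite $\Phi'$ with $\sim'=\sim_{\Phi'}$. Here I would use that the starting filtration is definable, so each $\sim_\Phi$-class is a definable subset of $W$, namely the conjunction of the formulas of $\Phi$ true at a representative together with the negations of those that are false. Since $\sim'$ merges the finitely many $\sim_\Phi$-classes into finitely many blocks, each $\sim'$-class is a finite union of $\sim_\Phi$-classes, hence defined by the corresponding disjunction; taking $\Phi'$ to be this finite set of block-defining formulas gives $x\sim' y$ iff $x$ and $y$ satisfy the same formula of $\Phi'$, i.e.\ $\sim'=\sim_{\Phi'}$, so $\fM'$ is a definable $\Gamma$-filtration. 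This passage from ``each class is definable'' to ``the equivalence is definable'' is the only genuinely non-routine step; everything else is a transfer through the Filtration Lemma.
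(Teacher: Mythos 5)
Your proof is correct and follows essentially the same route as the paper's: the paper also first takes a $\Gamma$-filtration $M_1$ of $M$, then collapses it by modal equivalence (phrased there as a filtration of $M_1$ through the set of all formulas rather than as a bisimulation quotient), transfers the result to $W$ via the composite equivalence, and handles definability by observing that each composite class is a finite union of definable $\sim_\Phi$-classes, hence defined by a disjunction. The only cosmetic difference is your use of Hennessy--Milner plus the p-morphism property of the quotient map to get $\fM'\models L$, where the paper invokes the Filtration Lemma for the second collapse; both are sound.
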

\begin{proof}
{\sc Idea:} first, build a $\Gamma$-filtration $M_1$ of $M$,
then a $\Fm$-filtration $M_2$ of $M_1$; finally,
build a differentiated $\Gamma$-filtration $\fM$ of~$M$ that is isomorphic to~$M_2$.

{\sc Formally}, let $M=(W,R,V)$, ${M\models L}$, and let $\Gamma$ be as stated above.

(1) Since $\mL$ admits filtration, there is a $\Gamma$-filtration $M_1=(W_1,R_1,V_1)$ of $M$
with ${M_1\models L}$. So, $W_1=W/{\sim}$ for some equivalence relation $\approx$ of finite index,
$\sim$ respects~$\Gamma$,
$R^{\mathsf{min}}_{\approx}\se R_1\se R^{\mathsf{max}}_{\approx,\Gamma}$,
$V_1$~is defined canonically on~$\Var(\Gamma)$.

(2) Let $M_2=(W_2,R_2,V_2)$ be a filtration
of~$M_1$ through the set of all formulas.%
\footnote{In fact, if a filtration through the set of all formulas
is finite, then it is unique,
\ie, the minimal and the maximal relations coincide.
But here we do not need this fact.}
So, $W_2=W_1/{\equiv}$, where $\equiv$ is the \emph{modal equivalence} relation;
$V_2$~is canonical on all variables.
By the Filtration lemma~\ref{Lemma:Filtration},
${M_1\equiv M_2}$, so ${M_2\models L}$.

(3) Now we build a model $\fM=(\fW,\fR,\fV)$ isomorphic to~$M_2$ as follows.
Put $\fW:=W/{\sim}$, where, for all $x,y\in W$, we define an equivalence relation $\sim$ by
\begin{center}
$x\,\sim\,y$ \quad $\smash{\stackrel{\mathrm{def}}{\IFF}}$ \quad
$(M_1,[x]_\approx)\,\equiv\,(M_1,[y]_\approx)$ \quad $\IFF$ \quad
$[[x]_\approx]_\equiv\ = \ [[y]_\approx]_\equiv$.
\end{center}

\noindent
{\bf Claim 1.} {\it The function $h([x]_\sim)=[[x]_\approx]_\equiv$ is a bijection between $\fW$ and $W_2$.}

\smallskip
\noindent{\sf Proof.} Easy.
This does not rely on the fact that $\approx$ and $\equiv$ are of finite index.

From now on, we denote $\fx=[x]_\sim$.

\smallskip\noindent
{\bf Claim 2.} {\it The equivalence relation $\sim$ on~$W$ respects $\Gamma$:
if $x\sim y$, then $x\,\sim_\Gamma\,y$.}

\smallskip\noindent
{\sf Proof.}
If $x,y\in W$ and $x\sim y$ then, by the Filtration lemma~\ref{Lemma:Filtration}, we have:
\begin{center}
$(M,x) \ \ \sim_\Gamma \ \ (M_1,[x]_\approx) \ \ 
\sim_\Fm\ \ 
(M_1,[y]_\approx) \ \ \sim_\Gamma \ \ (M,y)$.
\end{center}

Using the bijection~$h$, we transfer $R_2$ and $V_2$ to $\fM$ in the obvious way:
\begin{center}
$\fx\,\fR\,\fy$ \ $\smash{\stackrel{\mathrm{def}}{\IFF}}$ \ 
$h(\fx)\,R_2\,h(\fy)$;
\qquad
$\fx\models q$ \ $\smash{\stackrel{\mathrm{def}}{\IFF}}$
\ $M_2,h(\fx)\models q$,\ \mbox{for all $q\in\Var$.}
\end{center}

Since the models $\fM$ and $M_2$ are isomorphic, we have ${\fM\models L}$.

\smallskip\noindent
{\bf Claim 3.} {\it $\fV$ is canonical on each $p\in\Var(\Gamma)${\rm:}
\ $M,x\models p$ \ $\Iff$ \ $\fM,\fx\models p$.}

\smallskip\noindent
{\sf Proof.} Indeed: $(M,x) \sim_\Gamma (M_1,[x]_\approx) \sim_\Fm (M_2,[[x]_\approx]_\equiv)
\sim_{\Var} (\fM,\fx)$.

\smallskip\noindent
{\bf Claim 4.} {\it The inclusions \
$R^{\mathsf{min}}_{\sim}\ \se\ \fR\ \se\ R^{\mathsf{max}}_{\sim,\Gamma}$ \ hold.}

\smallskip\noindent
{\sf Proof.} (min) Clearly, $x\,R\,y$ $\To$ $[x]_\approx\,R_1\,[y]_\approx$ $\To$
$[[x]_\approx]_\equiv\,R_2\,[[y]_\approx]_\equiv$ $\Iff$ $\fx\,\fR\,\fy$.
\\
(max) If $\fx\,\fR\,\fy$,
then $[[x]_\approx]_\equiv\,R_2\,[[y]_\approx]_\equiv$.
But $R_2\se(R_1)^{\mathsf{max}}_{\equiv,\Fm}$.
So, for ${\B A\in\Gamma}$,
	
$M,x\models\B A$ $\Iff$ $M_1,[x]_\approx\models\B A$ $\To$
$M_1,[y]_\approx\models A$ $\Iff$ $M,y\models A$.

\smallskip\noindent
{\bf Claim 5.} {\it If $M_1$ is a definable filtration of~$M$, then $\fM$ is definable too.}

\smallskip\noindent
{\sf Proof.}
We use the following\ 
{\sf Fact.} {\it Let $M$ be a model and $\sim$ an equivalence relation on~$W$ of finite index.
Then $\sim$ is of the form $\sim_\Phi$
for some finite set of formulas $\Phi$ iff each equivalence class $[x]_\sim$
is defined in~$M$ by some formula.}

\smallskip
Indeed, if $\Phi$ is finite, then every class $[x]_{\sim_\Phi}\se W$ is defined by the formula
\begin{center}
$\AND\bigl(
\{\Fi\mid \Fi\in\Phi\ \mbox{and}\ M,x\models\Fi\}
\cup
\{\neg\Fi\mid \Fi\in\Phi\ \mbox{and}\ M,x\models\neg\Fi\}
\bigr)$.
\end{center}
Conversely, if $\sim$ partitions $W$ into 
finitely many classes and each class is defined by some formula $A_i$, $1\le i\le n$,
then clearly $\sim=\sim_\Phi$ for $\Phi=\{A_1,\ldots,A_n\}$.

\medskip
To prove Claim~5, assume $M_1$ is a filtration of $M$ through a finite~$\Phi$.
Then each class is defined by some formula~$A_i$.
In $\fM$, each $\sim$-class is obtained as the union of some $\sim_\Phi$-classes
(namely, those that are modally equivalent as points in~$M_1$).
Hence, each $\sim$-class is defined by the disjunction
of some formulas~$A_i$.
\end{proof}


\subsection{On the logic of convergent frames}

For convenience, we repeat the axioms for the transitive closure modality:
\begin{center}$
\mathsf{(A1)}\ \BP p \to \B p,
\qquad
\mathsf{(A2)}\ \BP p \to \B\BP p,
\qquad
\mathsf{(A3)}\ \BP(p\to\B p)\to (\B p \to \BP p).
$\end{center}
Note that in any logic 
$L^\BP$,
the following inference rule is derivable:
$$\smash{
\frac{\Fi\to\B \Fi}{\Fi\to\BP \Fi}}
\eqno{\mathsf{(R\BP)}}
$$

\noindent Indeed, here is a derivation:\\
1) $\Fi\to\B \Fi$.
2) $\BP(\Fi\to\B \Fi)$. 
3) $\B \Fi\to\BP \Fi$ by~$\mathsf{(A3)}$.
4) $\Fi\to\BP \Fi$ from 1 and~3.

\smallskip
Furthermore, in any logic $L^\BP$, the following formula is derivable:
$$
\B p\ \land\ \BP\B p \ \to\ \BP p
\eqno{\mathsf{(A\BP)}}
$$
since one of its premises, $\BP\B p$, is stronger then the premise $\BP(p\to\B p)$ in~$\mathsf{(A3)}$.

\medskip
Recall that the logic $\mathbf{K.2}$ extends $\bK$ with the axiom $\D\B p\to\B\D p$.

\begin{lemma}[Convergence for transitive closure]\label{Lem:Convergence:TrCl}
$\!\!\mathbf{K.2}^\BP\!\proves \DP\BP p\to\BP\DP p$.
\end{lemma}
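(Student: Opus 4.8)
The plan is to reduce the claim to the fact that convergence transfers from $R$ to its transitive closure, and to package the underlying tiling (``strip'') argument into a single application of a dual induction rule. Throughout write $\pi:=p\wedge\BP p$. By $\mathsf{(A1)}$ and $\mathsf{(A2)}$ we get $\vdash\BP p\to\B\pi$ (since $\B\pi=\B p\wedge\B\BP p$), and hence $\pi$ self-propagates: $\vdash\pi\to\B\pi$. I will also use the rule dual to $\mathsf{(R\BP)}$, namely $\mathsf{(R\DP)}$: from $\vdash\D\Fi\to\Fi$ infer $\vdash\DP\Fi\to\Fi$, obtained from $\mathsf{(R\BP)}$ by substituting $\neg\Fi$ and contraposing; the transitivity of the closure, $\vdash\D\DP\Fi\to\DP\Fi$, which is just $\mathsf{(A2)}$ contraposed; and $\vdash\D\Fi\to\DP\Fi$, which is $\mathsf{(A1)}$ contraposed.

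The heart of the argument is the \emph{strip lemma} $\vdash\DP\pi\to\B\DP\pi$, expressing that a single $R$-step commutes with the ability to reach a $\pi$-point. To prove it I take $\chi:=\B\DP\pi$ and verify two facts. First, $\vdash\D\chi\to\chi$: the convergence axiom instantiated at $\DP\pi$ gives $\vdash\D\B\DP\pi\to\B\D\DP\pi$, and transitivity turns $\D\DP\pi$ into $\DP\pi$ under the box, so $\vdash\D\B\DP\pi\to\B\DP\pi$; by $\mathsf{(R\DP)}$ this yields $\vdash\DP\B\DP\pi\to\B\DP\pi$. Second, $\vdash\pi\to\chi$, i.e.\ $\vdash\pi\to\B\DP\pi$, and this is exactly where convergence is used to exclude dead ends: on convergent frames every successor is serial, so $\vdash\B\D\top$ (from $\D\top\to\B\D\top$, $\B\bot\to\B\D\top$, and $\D\top\vee\B\bot$ by cases); combined with $\pi\to\B\pi$ this gives that every successor of a $\pi$-point is itself a $\pi$-point with a $\pi$-successor, whence $\vdash\pi\to\B\D\pi\to\B\DP\pi$. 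Monotonicity of $\DP$ turns the second fact into $\vdash\DP\pi\to\DP\B\DP\pi$, and composing with the first proves the strip lemma.

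It remains to assemble the pieces. Applying $\mathsf{(R\BP)}$ to the strip lemma lifts the single-step commutation to $\vdash\DP\pi\to\BP\DP\pi$, and since $\pi\to p$ we obtain $\vdash\DP\pi\to\BP\DP p$. Finally I need an entry step converting the given witness of $\BP p$ into a witness of $\pi$: since $\vdash\BP\D\top$ (again seriality-in-the-future, via $\mathsf{(R\BP)}$ applied to $\D\top\to\B\D\top$), any $R^+$-reachable $\BP p$-point is serial, and pushing one step into its future through $\BP p\to\B\pi$ and $\B\pi\wedge\D\top\to\D\pi\to\DP\pi$ yields $\vdash\DP\BP p\to\DP\pi$ (using $\DP A\wedge\BP B\to\DP(A\wedge B)$ and transitivity $\DP\DP\pi\to\DP\pi$). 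Composing $\vdash\DP\BP p\to\DP\pi$ with $\vdash\DP\pi\to\BP\DP p$ gives $\mathbf{K.2}^\BP\proves\DP\BP p\to\BP\DP p$.

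The main obstacle is the strip lemma. Semantically it is precisely the one-step diamond/confluence tiling that lifts convergence of $R$ to $R^+$, and the delicate point is that it cannot be handled by a naive pointwise implication: for instance $\DP\BP p\to\DP p$ is \emph{not} valid, because $\BP p$ may hold vacuously at a terminal witness. Replacing $p$ by the serial, self-propagating predicate $\pi=p\wedge\BP p$ and carrying out the path-induction through the derived rule $\mathsf{(R\DP)}$ — with the convergence axiom supplying both the commutation step $\D\chi\to\chi$ and the seriality $\vdash\B\D\top$ that rules out vacuous witnesses — is what makes the derivation go through.
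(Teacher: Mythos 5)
Your derivation is correct, but it takes a genuinely different route from the paper's. The paper bootstraps in two identical stages: it first derives the mixed commutation $\D\BP p\to\BP\D p$ by instantiating the convergence axiom $\D\B q\to\B\D q$ at $q:=p$ and $q:=\BP p$, combining the results via $\mathsf{(R\BP)}$ and the derived axiom $\mathsf{(A\BP)}\colon\B q\land\BP\B q\to\BP q$; it then dualizes this to $\DP\B p\to\B\DP p$ and reruns the \emph{same} derivation with $\DP$ playing the role of $\D$, mirroring the two nested inductions in the semantic proof that convergence transfers to $R^+$. You instead strengthen the predicate to $\pi=p\land\BP p$, prove a single ``strip'' lemma $\DP\pi\to\B\DP\pi$ by applying the dual rule $\mathsf{(R\DP)}$ to the one-step confluence $\D\B\DP\pi\to\B\DP\pi$, and then apply $\mathsf{(R\BP)}$ once; the entry and exit are handled by the seriality theorems $\B\D\top$ and $\BP\D\top$ of $\mathbf{K.2}$. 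The paper's proof is shorter and more uniform; yours makes explicit where seriality of successors in convergent frames enters and isolates a reusable commutation lemma, at the cost of the auxiliary predicate and the $\D\top$ machinery. Two points to tighten. First, $\mathsf{(R\BP)}$ applied to $\D\top\to\B\D\top$ yields only the conditional $\D\top\to\BP\D\top$, not $\proves\BP\D\top$; to discharge the antecedent either add the case $\B\bot$ (where $\mathsf{(A3)}$ with $p:=\bot$ gives $\B\bot\to\BP\bot\to\BP\D\top$, since $\BP(\bot\to\B\bot)$ is a theorem) or, more directly, derive $\BP\D\top$ from $\proves\B\D\top$ by $\BP$-necessitation and $\mathsf{(A\BP)}$. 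Second, your motivating remark that $\DP\BP p\to\DP p$ is ``not valid'' holds over arbitrary frames $(W,R,R^+)$ but not over convergent ones, where the witness of $\BP p$ is itself a successor and hence serial; the formula is simply underivable without invoking convergence, which is the point your $\pi$-trick addresses.
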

\begin{proof}
The proof is in two stages.

\smallskip\noindent{\bf (1)} We derive $\D\BP p\too\BP\D p$, using $\D\B\Fi\to\B\D\Fi$ for $\Fi=p$ and $\Fi=\BP p$:

\begin{center}
\begin{tabular}{l}
$\D\BP p$ \ $\tooo{\mathsf{(A1)}}$ \ $\D\B p\hphantom{\BP}$ \ $\tooo{.2}$ \ $\B\D p$. \quad  $(a)$
\\
$\D\BP p$ \ $\tooo{\mathsf{(A2)}}$ \ $\D\B\BP p$ \ $\tooo{.2}$ \ $\B\D\BP p$. Hence:
\\
$\D\BP p$ \ $\tooo{(\mathsf{R\BP})}$ \ $\BP\D\BP p$ \ $\tooo{(a)}$ \ $\BP\B\D p$. \quad (b)
\\
$\D\BP p$ \ $\tooo{(\mathsf{A\BP})}$ \ $\BP\D p$, \quad obtained from $(a)$ and $(b)$.
\end{tabular}
\end{center}

\noindent
{\bf (1')} We obtain $\DP\B p\too\B\DP p$ by duality from (1).

\smallskip\noindent{\bf (2)} Derive $\DP\BP p\too\BP\DP p$ using (1') similarly
(replace $\D$ with~$\DP$ above).
\end{proof}

Note that the two stages of the derivation in the above lemma
correspond to two inductions needed to prove
that $R^+$ is convergent, assuming that $R$ is convergent.
First, by induction on $m$, one proves:
\begin{center}
$(x\,R^m\,y$ and $x\,R\,z)$ \ $\To$ \ $\exists t$: $(y\,R\,t$ and $z\,R^m\,t)$.
\end{center}
Secondly, by induction on~$n$ one proves:
\begin{center}
$(x\,R^m\,y$ and $x\,R^n\,z)$ \ $\To$ \ $\exists t$: $(y\,R^n\,t$ and $z\,R^m\,t)$.
\end{center}
Now, if $x\,R^+\,y$ and $x\,R^+\,z$, then 
$x\,R^m\,y$ and $x\,R^n\,z$, for some $m,n$.
Then there is $t$ such that $y\,R^n\,t$ and $z\,R^m\,t$.
Hence $y\,R^+\,t$ and $z\,R^+\,t$. So, $R^+$ is convergent.

This additionally justifies the name `induction axiom' for the axiom~$\mathsf{(A3)}$.

\subsection{On the semantics of Segerberg's axioms}

\begin{lemma}\label{Lem:Segerberg:Axioms:Semantics}
Let $F=(W,R,S)$ be a bi-modal frame.

\noindent
\begin{tabular}{@{}l@{\ \ }lcl@{}}
{\sf (1)} & $F\valid (A1)$ & $\IFF$ & $S\su R$. \\
{\sf (2)} & $F\valid (A2)$ & $\IFF$ & $S\su R\circ S$. \\
{\sf (3)} & $F\valid (A1)\land (A2)$ & $\TO$ & $S\su R^+$; the converse does not hold in general. \\
{\sf (4)} & $F\valid (A3)$ & $\TO$ & $S\se R^+$; the converse does not hold in general. \\
{\sf (5)} & \multicolumn{3}{@{}l}{$F\valid (A1)\land (A2)\land (A3)$ \qquad $\IFF$ \qquad $S=R^+$.}
\end{tabular}
\end{lemma}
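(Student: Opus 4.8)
The plan is to prove the five items essentially independently, handling each implication by the standard modal correspondence technique (a direct argument for soundness, a refuting valuation for the converse), and then to assemble~(5) from the earlier parts. For~(1), the soundness direction is immediate: if $R\se S$ then every point refuting $\B p$ already refutes $\BP p$, since each $R$-successor is an $S$-successor, so $(A1)$ holds; conversely, given $x\,R\,y$ with $x\,S\,y$ failing, the valuation $V(p)=W\setminus\{y\}$ makes $\BP p$ true but $\B p$ false at $x$, refuting $(A1)$. Item~(2) is analogous once we read $\B\BP p$ as the box for the composition $R\circ S$ (so $\B\BP p$ holds at $x$ iff every $(R\circ S)$-successor of $x$ satisfies $p$): soundness uses $R\circ S\se S$ directly, and if $x\,(R\circ S)\,z$ but not $x\,S\,z$, then $V(p)=W\setminus\{z\}$ refutes $(A2)$ at~$x$.

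For~(3) I would invoke (1) and (2): from $R\se S$ and $R\circ S\se S$ one shows $R^n\se S$ for every $n\ge1$ by induction, since $R^{n+1}=R\circ R^n\se R\circ S\se S$, whence $R^+\se S$ (that is, $S\su R^+$). To witness that the converse fails, take $W=\{0,1,2\}$, $R=\{(0,1)\}$, $S=\{(0,1),(1,2)\}$: here $R^+=\{(0,1)\}$, so $S\su R^+$, yet $R\circ S=\{(0,2)\}\not\se S$, so $(A2)$ and hence $(A1)\con(A2)$ fail.

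The crux is~(4). For $F\valid(A3)\TO S\se R^+$ I would fix $x\,S\,y$ and use the reachability valuation $V(p)=\{z\mid x\,R^+\,z\}$. Under it, $\B p$ holds at $x$, because each $R$-successor of $x$ lies in $R^+(x)$; and $\BP(p\to\B p)$ holds at $x$, because any $S$-successor $w$ that satisfies $p$ lies in $R^+(x)$, so transitivity of $R^+$ places every $R$-successor of $w$ in $R^+(x)$ as well, while $S$-successors failing $p$ satisfy $p\to\B p$ vacuously. Applying $(A3)$ then gives $\BP p$ at $x$, forcing $y\in R^+(x)$, i.e. $x\,R^+\,y$. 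For the failure of the converse I would exhibit $W=\{x,a,y\}$, $R=\{(x,a),(a,y)\}$, $S=\{(x,y)\}$: then $S\se R^+$, but the valuation $V(p)=\{a\}$ verifies both antecedents of $(A3)$ at $x$ (note $\B p$ holds since $a\in V(p)$, and $\BP(p\to\B p)$ holds since the only $S$-successor $y$ fails $p$) while refuting $\BP p$, so $(A3)$ is not valid.

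Finally~(5): the forward direction is immediate from (3) and (4), which give $R^+\se S$ and $S\se R^+$, hence $S=R^+$. For the backward direction, assuming $S=R^+$, items (1) and (2) deliver $(A1)$ and $(A2)$ from $R\se R^+$ and $R\circ R^+\se R^+$; and $(A3)$ I would verify directly, assuming $\B p$ and $\BP(p\to\B p)$ at $x$ and showing by induction on the length of the $R$-path from $x$ that every $R^+$-successor of $x$ satisfies $p$ (the base case coming from $\B p$, the inductive step from $p\to\B p$ propagating truth of $p$ one $R$-step further), which is exactly $\BP p$ at $x$. I expect the main obstacle to be precisely the $(A3)$ correspondence of~(4): designing the reachability valuation and, in particular, discharging the $\BP(p\to\B p)$ antecedent, where the vacuous case and the transitivity of $R^+$ must be combined carefully; this non-first-order behaviour of $(A3)$ is also exactly why only one implication survives, as the two counterexamples are meant to witness.
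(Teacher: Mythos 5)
Your proof is correct: the paper itself gives no argument (it dismisses the lemma as ``a known fact''), and your treatment is exactly the standard correspondence-theoretic one it implicitly relies on --- in particular the reachability valuation $V(p)=R^{+}(x)$ for the forward direction of (4), the induction on path length for $(A3)$ under $S=R^{+}$ in (5), and the two counterexamples (which I checked: in the first, $R\circ S=\{(0,2)\}\not\subseteq S$ refutes $(A2)$ while $S\supseteq R^{+}$; in the second, $V(p)=\{a\}$ refutes $(A3)$ while $S\subseteq R^{+}$) all work as claimed. Nothing is missing.
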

\begin{proof}
This is a known fact.
\end{proof}

\subsection{On induction axiom and minimal filtrated frame}

Let us strengthen Lemma~\ref{Lem:A3:min:filtration} (recall that ${G\valid(A3)}$ implies ${S\se R^+}$).
Denote the \emph{minimal filtered} (through~$\Phi$) frame by
$G^{\mathsf{min}}_{\sim_\Phi}=(W/{\sim_\Phi},R^{\mathsf{min}}_{\sim_\Phi},S^{\mathsf{min}}_{\sim_\Phi})$.

\begin{lemma}[Induction axiom and minimal filtrated frame]\label{Lem:A3:min:frame} \ \\
%
Let $M=(W,R,S,V)\models(A3)^*$ and let $\Phi\se\Fm$ be finite.
Then ${G^{\mathsf{min}}_{\sim_\Phi}\valid(A3)}$.
\end{lemma}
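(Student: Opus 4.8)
The plan is to verify $(A3)$ against an \emph{arbitrary} valuation on $G^{\mathsf{min}}_{\sim_\Phi}$, pull each such valuation back to a definable subset of $M$, and then invoke the hypothesis $M\models(A3)^*$ at a carefully chosen representative point. Write $r:=R^{\mathsf{min}}_{\sim_\Phi}$ and $s:=S^{\mathsf{min}}_{\sim_\Phi}$, so that $G^{\mathsf{min}}_{\sim_\Phi}=(\fW,r,s)$ with $\fW=W/{\sim_\Phi}$, where $\B$ is read via $r$ and $\BP$ via $s$. Unfolding validity of $(A3)=\BP(p\to\B p)\to(\B p\to\BP p)$, it suffices to show: for every $P\se\fW$ and every $\fx\in\fW$ satisfying (H1), for all $\fz$ with $\fx\,s\,\fz$, if $\fz\in P$ then $r(\fz)\se P$ (this is $\BP(p\to\B p)$ at $\fx$), and (H2), $r(\fx)\se P$ (this is $\B p$ at $\fx$), we have $s(\fx)\se P$ (this is $\BP p$ at $\fx$).

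The first key step is the same definability observation used in the proof of Lemma~\ref{Lem:A3:min:filtration}: since $\Phi$ is finite and $\fW$ is finite, each $\sim_\Phi$-class is a definable subset of~$W$, hence so is the union $\bigcup P$. I fix a formula~$\Fi$ with $M,w\models\Fi$ iff $[w]_{\sim_\Phi}\in P$, for all $w\in W$ (take $\Fi=\bot$ if $P=\varnothing$).

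For the main step, take any $\fu$ with $\fx\,s\,\fu$. By definition of $s=S^{\mathsf{min}}_{\sim_\Phi}$, choose representatives $x'\sim x$ and $u'\sim u$ with $x'\,S\,u'$, and apply the instance of $(A3)$ for $\Fi$ \emph{at the point}~$x'$. First, $M,x'\models\B\Fi$: if $x'\,R\,w$ then $\fx\,r\,[w]$ (as $[x']=\fx$ and $R\se r$ with no representative ambiguity), so $[w]\in r(\fx)\se P$ by (H2), whence $M,w\models\Fi$. Second, $M,x'\models\BP(\Fi\to\B\Fi)$: if $x'\,S\,w$ and $M,w\models\Fi$, then $\fx\,s\,[w]$ and $[w]\in P$, so by (H1) we get $r([w])\se P$; hence any $w''$ with $w\,R\,w''$ has $[w'']\in r([w])\se P$, i.e.\ $M,w''\models\Fi$, giving $M,w\models\B\Fi$. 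Since $M\models(A3)^*$, the point $x'$ satisfies $\BP(\Fi\to\B\Fi)\to(\B\Fi\to\BP\Fi)$, so $M,x'\models\BP\Fi$. From $x'\,S\,u'$ we conclude $M,u'\models\Fi$, i.e.\ $\fu=[u']\in P$. As $\fu$ was an arbitrary $s$-successor of~$\fx$, this gives $s(\fx)\se P$, as required.

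The delicate point, which I expect to be the main obstacle, is that $(A3)$ is genuinely second-order: the already-established inclusion $s\se r^+$ (Lemma~\ref{Lem:A3:min:filtration}) does \emph{not} by itself force validity of $(A3)$ on~$G^{\mathsf{min}}_{\sim_\Phi}$, so one must use the full hypothesis $M\models(A3)^*$ rather than its first-order consequence. The matching subtlety is that $r$ and $s$ are defined by an existential choice of representatives, so truth of $\BP$-formulas cannot be transported between $M$ and $G^{\mathsf{min}}_{\sim_\Phi}$ pointwise at~$x$. The resolution is precisely to apply the induction axiom at the representative~$x'$ (not at~$x$): the ``upward'' inclusions $R\se r$ and $S\se s$ hold with no representative ambiguity, which is exactly what verifies the two premises of $(A3)$ at~$x'$, while finiteness of~$\Phi$ is what turns the arbitrary valuation~$P$ into a definable formula~$\Fi$.
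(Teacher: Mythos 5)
Your proof is correct and takes essentially the same route as the paper's: both turn the relevant subset of $\fW$ into a formula $\Fi$ definable in $M$ (using finiteness of $\Phi$ and of $\fW$), verify the two premises of $(A3)[p:=\Fi]$ at a representative point via the unambiguous upward inclusions $a\,R\,b\Rightarrow\fa\,r\,\fb$ and $a\,S\,b\Rightarrow\fa\,s\,\fb$, and then invoke $M\models(A3)^*$ to transport $\BP\Fi$ along a chosen $S$-witness. The only difference is packaging: the paper argues by contraposition, reducing frame validity to the truth of all substitution instances in the finite differentiated filtrated model via Lemma~\ref{Lem:Fin:Diff:Model}, whereas you verify validity directly for an arbitrary valuation $P\subseteq\fW$ — which works because every such $P$ is a union of $\sim_\Phi$-classes and hence definable, the same fact underlying the paper's detour.
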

\begin{proof}
Denote 
$\fM:=M^{\mathsf{min}}_{\sim_\Phi}=(G^{\mathsf{min}}_{\sim_\Phi},\fV)$,
where $G^{\mathsf{min}}_{\sim_\Phi}=(\fW,R^{\mathsf{min}}_{\sim_\Phi},S^{\mathsf{min}}_{\sim_\Phi})$.
Note that $\fM$ is a $\Phi$-filtration of~$M$,
since $R^{\mathsf{min}}_{\sim_\Phi}\se R^{\mathsf{max}}_{\Phi}$ and similarly for~$S$.
Therefore, $\fM$ is a finite \emph{differentiated} model:
indeed, if ${\fx\ne\fy}$, then $(M,x)\not\sim_\Phi(M,y)$, 
hence $(\fM,\fx)\not\sim_\Phi(\fM,\fy)$, by the Filtration lemma~\ref{Lemma:Filtration}.
Due to Lemma~\ref{Lem:Fin:Diff:Model},
%
in order to prove our lemma, it suffices to show that
\begin{center}
${M\models(A3)^*}$ \ implies \ ${\fM:=M^{\mathsf{min}}_{\sim_\Phi}\models(A3)^*}$.
\end{center}

Assume $\fM\nmodels(A3)[p:=B]$, for some formula~$B$.
Then there is ${\fx\in\fW}$ such that
{\bf(a)} $\fx\models\BP({B\to\B B})$,
{\bf(b)} ${\fx\models\B B}$,
{\bf(c)} ${\fx\nmodels\BP B}$.
Hence there is ${\fy\in\fW}$ such that $\fx\,\fS\,\fy$ and
{\bf(d)} ${\fy\nmodels B}$.
Since $\fx\,S^{\mathsf{min}}_{\sim_\Phi}\,\fy$, without loss of generality, $x\,S\,y$.

Consider  $Y:=\fV(B)=\{\fz\in\fW\mid\fM,\fz\models B\}$.
As in Lemma~\ref{Lem:A3:min:filtration},
$Y$ is a finite collection of definable
subsets of~$W$, hence their union ${\bigcup}Y$ is also a definable subset of~$W$.
So, there is a formula~$\Fi$ such that, for all ${z\in W}$, we have:
\begin{center}
$M,z\models\Fi$ \ $\Iff$ \ $z\in{\textstyle\bigcup}Y$ \ $\Iff$ \ 
$\fz\in Y=\fV(B)$ \ $\Iff$ \ $\fM,\fz\models B$.
\end{center}

Now let us show that $M,x\nmodels(A3)[p:=\Fi]$, in contradition with ${M\models(A3)^*}$.

\noindent
{\bf(a')} $M,x\models\BP({\Fi\to\B\Fi})$. Indeed, take any $a,b\in W$ such that
$x\,S\,a\,R\,b$ and ${a\models\Fi}$. Then $\fx\,\fS\,\fa\,\fR\,\fb$ and ${\fa\models B}$.
Hence ${\fb\models B}$ by~{\bf(a)}, and so ${b\models\Fi}$.

\noindent
{\bf(b')} ${M,x\models\B\Fi}$. Indeed, if $x\,R\,z$, then $\fx\,\fR\,\fz$;
hence ${\fz\models B}$ by~{\bf(b)}, so ${z\models\Fi}$.

\noindent
{\bf(d')} $M,x\nmodels\BP\Fi$. Indeed, $x\,S\,y$ and $M,y\nmodels\Fi$, because $\fy\nmodels B$ by~{\bf(d)}.
\end{proof}

{\bf Question:} What other formulas transfer from $M$ to $\fM$ and back?
Maybe, even: $M\models A[p:=\Fi]$ iff $\fM\models A[p:=B]$, for all formulas $A$ with $\Var(A)=\{p\}$?

\medskip
Another question is: {\it What other modal formulas $\Fi$ have the property 
from the above lemma, namely:
\begin{center}
if $M\models \Fi^*$, then $\fF^{\mathsf{min}}_{\sim_\Phi}\valid \Fi$,
\end{center}
for any finite set of modal formulas~$\Phi$?
}


\begin{thebibliography}{10}
\expandafter\ifx\csname url\endcsname\relax
  \def\url#1{\texttt{#1}}\fi
\expandafter\ifx\csname urlprefix\endcsname\relax\def\urlprefix{URL }\fi
\newcommand{\enquote}[1]{``#1''}

\bibitem{B:R:V:ML:2002}
Blackburn, P., M.~de~Rijke and Y.~Venema, \enquote{Modal Logic,}  Cambridge
  Tracts in Theoretical Computer Science  \textbf{53}, Cambridge University
  Press, 2002.

\bibitem{Ch:Za:ML:1997}
Chagrov, A. and M.~Zakharyaschev, \enquote{Modal Logic,}  Oxford Logic Guides
  \textbf{35}, Oxford University Press, 1997.

\bibitem{dunin2011teamwork}
Dunin-Keplicz, B. and R.~Verbrugge, \enquote{Teamwork in multi-agent systems: A
  formal approach,}  Wiley Series in Agent Technology  \textbf{21}, John Wiley
  \& Sons, 2011.

\bibitem{fagin2003reasoning}
Fagin, R., Y.~Moses, J.~Y. Halpern and M.~Y. Vardi, \enquote{Reasoning about
  knowledge,} MIT press, 2003.

\bibitem{fischer1979propositional}
Fischer, M.~J. and R.~E. Ladner, \emph{Propositional dynamic logic of regular
  programs}, Journal of computer and system sciences \textbf{18} (1979),
  pp.~194--211.

\bibitem{Libkin14}
Gheerbrant, A., L.~Libkin and C.~Sirangelo, \emph{Na{\"{\i}}ve evaluation of
  queries over incomplete databases}, {ACM} Trans. Database Syst. \textbf{39}
  (2014), pp.~31:1--31:42.

\bibitem{GoldbLTC}
Goldblatt, R., \enquote{Logics of Time and Computation,} Number~7 in CSLI
  Lecture Notes, Center for the Study of Language and Information, 1992, 2nd
  edition.

\bibitem{itas2015}
Kikot, S., \emph{First-order formulas that are preserved under minimal
  filtration}, in: \emph{Proceedings of 39th International Workshop of IITP RAS
  ``Information Technologies and Systems 2015''}, 2015, pp. 635--639, (in
  {R}ussian).

\bibitem{KSZ:AiML:2014}
Kikot, S., I.~Shapirovsky and E.~Zolin, \emph{Filtration safe operations on
  frames}, in: R.~Gor{\'{e}}, B.~P. Kooi and A.~Kurucz, editors, \emph{Advances
  in Modal Logic}, 10 (2014), pp. 333--352.

\bibitem{KozenParikh1981}
Kozen, D. and R.~Parikh, \emph{An elementary proof of the completeness of
  {PDL}}, Theoretical Computer Science \textbf{14} (1981), pp.~113--118.

\bibitem{WolterKracht1991}
Kracht, M. and F.~Wolter, \emph{Properties of independently axiomatizable
  bimodal logics}, J. Symbolic Logic \textbf{56} (1991), pp.~1469--1485.

\bibitem{Segerberg1968}
Segerberg, K., \emph{Decidability of four modal logics}, Theoria \textbf{34}
  (1968), pp.~21--25.

\bibitem{Segerberg1977}
Segerberg, K., \emph{A completeness theorem in the modal logic of programs},
  Notices of the American Mathematical Society \textbf{A-522} (1977),
  pp.~77T--E69.

\bibitem{Segerberg1982}
Segerberg, K., \emph{A completeness theorem in the modal logic of programs},
  Banach Center Publications \textbf{9} (1982), pp.~31--46.

\bibitem{Segerberg:1993:PDL}
Segerberg, K., \enquote{A Concise Introduction to Propositional Dynamic Logic,}
  1993.

\bibitem{Shehtman1987}
Shehtman, V., \emph{On some two-dimensional modal logics}, in: \emph{8th
  Congress on Logic Methodology and Philosophy of Science, abstracts}, volume 1
  (1987), pp. 326--330.

\bibitem{Shehtman:AiML:2004}
Shehtman, V., \emph{Filtration via bisimulation}, in: R.~Schmidt,
  I.~Pratt-Hartmann, M.~Reynolds and H.~Wansing, editors, \emph{Advances in
  Modal Logic}, 5 (2004), pp. 289--308.

\bibitem{Shehtman:AiML:2014}
Shehtman, V., \emph{Canonical filtrations and local tabularity}, in:
  R.~Gor{\'{e}}, B.~P. Kooi and A.~Kurucz, editors, \emph{Advances in Modal
  Logic}, 10 (2014), pp. 498--512.

\end{thebibliography}
\end{document}